\newtoks\prt 
\numberwithin{equation}{section}
\newtheorem{thm}{Theorem}[section]
\newtheorem{question}[thm]{Question} 
\newtheorem{lemma}[thm]{Lemma} 
\newtheorem{prop}[thm]{Proposition} 
\newtheorem{cor}[thm]{Corollary} 
\newtheorem{example}[thm]{Example} 
\newtheorem*{claim}{Claim} 
\theoremstyle{definition} 
\newtheorem{remark}[thm]{Remark} 
\newtheorem*{rem}{Remark}
\newtheorem{definition}[thm]{Definition}
\def\eqn#1$$#2$${\begin{equation}\label#1#2\end{equation}}
\def\F{\mathcal F}
\def\L{\mathcal L} 
\def\U{\mathcal U}
\def\diam{\operatorname{diam}} 
\def\ep{\varepsilon} 
\def\en{\mathbb N} 
\def\er{\mathbb R}
\def \reg {\partial _{\kern1pt\text{reg}}}
\def\dd{\operatorname{d}}
\def\di{\,\mbox{\rm d}}
\def\dh{\widehat{\operatorname{d}}}
\def\clu#1#2{\operatorname{clust}_{#1^{**}}(#2)}
\def\clud#1#2{\operatorname{clust}_{#1^{***}}(#2)}
\def\wde#1{\widetilde{\delta}\left(#1\right)}
\def\de#1{\delta\left(#1\right)}
\renewcommand{\Re}{\operatorname{Re}}
\newcommand{\wcc}{\operatorname{wcc}}
\newcommand{\ca}[2][]{\operatorname{ca}_{#1}\left(#2\right)}
\newcommand{\wca}[2][]{\widetilde{\operatorname{ca}}_{#1}\left(#2\right)}
\newcommand{\cont}[2][\rho]{\operatorname{cont}_{#1}\left(#2\right)}
\newcommand{\cc}[2][]{\operatorname{cc}_{#1}\left(#2\right)}
\newcommand{\wk}[2][X]{\operatorname{wk}_{#1}\left(#2\right)}
\newcommand{\wck}[2][X]{\operatorname{wck}_{#1}\left(#2\right)}
\newcommand{\wscl}[1]{\overline{#1}^{w^*}}
\newcommand{\upleq}{\rotatebox{90}{$\,\leq\ $}}
\newcommand{\upsim}{\rotatebox{90}{$\,\sim\ $}}
\newcommand{\sleq}{\;\overset{\mbox{\normalsize$<$}}{{\mbox{\small$\sim$}}}\;}
\newcommand{\bchi}{\mbox{\Large$\chi$}}
\begin{document}

\title{Quantitative Dunford-Pettis property}
\author{Miroslav Ka\v{c}ena, Ond\v{r}ej F.K. Kalenda and Ji\v{r}\'{\i} Spurn\'y}

\address{Department of Mathematical Analysis \\
Faculty of Mathematics and Physic\\ Charles University\\
Sokolovsk\'{a} 83, 186 \ 75\\Praha 8, Czech Republic}
\email{kacena@karlin.mff.cuni.cz}
\email{kalenda@karlin.mff.cuni.cz}
\email{spurny@karlin.mff.cuni.cz}

\subjclass[2010]{46B03; 46B20; 47B07; 47B10}
\keywords{Dunford-Pettis property; quantitative Dunford-Pettis property;	 measures of weak non-compactness; Mackey topology}

\thanks{The second author was supported in part by the grant
GAAV IAA 100190901.}
\thanks{The third author was supported in part by the grant GA\v{C}R 201/07/0388.}
%s were supported in part by the grant
%GAAV IAA 100190901  and in part by the Research Project MSM~0021620839 from the Czech Ministry of Education.}

\begin{abstract} 
We investigate possible quantifications of the Dunford-Pettis property. We show, in particular, that the Dunford-Pettis property is automatically quantitative in a sense. Further, there are two incomparable mutually dual stronger versions of a quantitative Dunford-Pettis property. We prove that $L^1$ spaces and $C(K)$ spaces posses both of them. We also show that several natural measures of weak non-compactness are equal in $L^1$ spaces.
\end{abstract}
\maketitle

%%%%%%%%%%%%%%%%%%%%%%%%%%%%%%%%%%%%%%%%%%%%%%%%%%%%

\section{Introduction}

A Banach space $X$ is said to have the \emph{Dunford-Pettis property} if for any Banach space $Y$ every 
weakly compact operator $T:X\to Y$ is completely continuous. Let us recall that $T$ is \emph{weakly compact} if the image by $T$ of the unit ball of $X$ is relatively weakly compact in $Y$. Further, $T$ is
\emph{completely continuous} if it maps weakly convergent sequences to norm convergent ones, or, equivalently, if it maps weakly Cauchy sequence to norm Cauchy (hence norm convergent) ones.

There are several well-known classes of Banach spaces with the Dunford-Pettis property. For example,
any Banach space whose dual has the Schur property, the space $C(K)$ of continuous functions on a compact Hausdorff space and the space $L^1(\mu)$ for any non-negative $\sigma$-additive measure have the Dunford-Pettis property. The proof of the first case is an easy consequence of the Gantmacher and the Schauder theorem and will be commented below. The other two cases are proved in \cite[Th\'eor\`eme~1]{gro} and outlined also in \cite[pp. 61--62]{JoLi}.

A complementary notion is that of the {\it reciprocal Dunford-Pettis property}. A Banach space $X$ has the reciprocal Dunford-Pettis property if for any Banach space $Y$ any completely continuous operator $T:X\to Y$ is weakly compact. In general, the classes of weakly compact operators and completely continuous operators are incomparable (the identity on $\ell_2$ is weakly compact but not completely
continuous, the identity on $\ell_1$ is completely continuous but not weakly compact). The spaces of the form $C(K)$ where $K$ is a compact Hausdorff space have both the Dunford-Pettis property (see the previous paragraph) and the reciprocal Dunford-Pettis property (see \cite[p. 153, Th\'eor\`eme~4]{gro}).

In the present paper we investigate quantitative versions of the Dunford-Pettis property. It is inspired
by a number of recent results on quantitative versions of certain theorems and properties. In particular,
quantitative versions of the Krein theorem were studied in \cite{f-krein,Gr-krein,GHM,CMR}, quantitative versions of the Eberlein-\v{S}mulyan and the Gantmacher theorem were investigated in \cite{AC-meas}, a quantitative version of James' compactness theorem was proved in \cite{CKS,Gr-james}, a quantification of weak sequential completeness and of the Schur property was addressed in \cite{wesecom,qschur}.

The main idea behind quantitative versions is an attempt to replace the respective implication by an inequality. So, in case of the Dunford-Pettis property we will try to replace the implication 
$$T\mbox{ is weakly compact} \Rightarrow T\mbox{ is completely continuous}$$
by an inequality of the form
\begin{multline*} \mbox{measure of non-complete continuity of }T \\ \leq C\cdot \mbox{measure of weak non-compactness of }T.\end{multline*}
There is a natural measure of non-complete continuity (see below) and several non-equivalent natural measures of weak non-compactness of an operator. It is rather interesting that for one of these measures
of weak non-compactness the Dunford-Pettis property is automatically quantitative but for another one it is not the case.

Non-equivalence of several measures of weak non-compactness leads us to two ways of a strengthening of the Dunford-Pettis property. We call the resulting main notions of our paper the \emph{direct} and \emph{dual quantitative Dunford-Pettis property.} They are characterized in Theorems~\ref{q-DP-direct} and \ref{q-DP-dual}. Moreover, they are mutually incomparable as witnessed by Example~\ref{protipriklad}.
Both these properties are shared by classical spaces with the Dunford-Pettis property, i.e., $C(K)$ and $L_1$ spaces. This is proved in Theorem~\ref{l1-li} where it is shown  that $\L_1$ and $\L_\infty$ spaces posses both the direct and dual quantitative Dunford-Pettis property.

We also include some results on a quantitative reciprocal Dunford-Pettis property. Since we have not
investigated this property in detail, we include only those results that naturally appear as byproducts
of our investigation of the Dunford-Pettis property and related quantities. A more detailed investigation is contained in \cite{ka-spu-stu}.

The paper is organized as follows:

Section~\ref{sec-pre} contains definitions of basic quantities used in the paper, a survey of known and easy relationships and inequalities among them and a comparison of the introduced notions in complex and real Banach spaces.

In Section~\ref{sec-easy} we collect quantitative versions of easy inclusions among four classes of operators - compact, weakly compact, completely continuous and weakly completely continuous ones.

Section~\ref{sec-wco} contains quantitative versions of two known results characterizing weakly compact operators by means of their continuity in a certain topology. 

In Section~\ref{sec-twoways} we show that the Dunford-Pettis property is automatically quantitative in a sense. We further define the above mentioned natural stronger quantitative versions of the Dunford-Pettis property, establish their characterizations and mutual duality. We also formulate there the main results on $\L_1$ and $\L_\infty$ spaces proven in the sequel.

Section~\ref{sec-schur} is devoted to the relationship of the Schur property and quantitative Dunford-Pettis properties. 

In  Section~\ref{sec-L1} we show that natural measures of weak non-compactness coincide in $L^1$ spaces.
In particular, we compute these measures explicitly.

In Section~\ref{sec-directCK} we use the results of the previous sections to prove that $C(K)$ spaces and, more generally, $\L_\infty$ spaces have the direct quantitative Dunford-Pettis property.

Section~\ref{sec-dualCK} contains a quantification of some results from the measure theory and the proof that $C(K)$ spaces (and hence $\L_\infty$ spaces) have the dual quantitative Dunford-Pettis property as well.

Section~\ref{sec-example} contains an example showing that the two quantitative versions of the Dunford-Pettis property are incomparable and that a space with the Dunford-Pettis property need not  satisfy any of the two quantitative versions.

In the last section we collect some open problems.

%%%%%%%%%%%%%%%%%%%%%%%%%%%%%%%%%%%%%%%%%%%%%%
\section{Preliminaries}\label{sec-pre}

In this section we collect basic notation and definitions of the necessary quantities. Banach spaces
which we consider can be either real or complex -- all the results are valid in both cases. However,
some of the results which we are referring to are formulated and proved only for real spaces. In the first subsection we will show a general method how these results can be transferred to complex spaces.

\subsection{Real and complex spaces}

If $X$ is a (real or complex) Banach space, we define the spaces $X^{(n)}$ for $n\in\en\cup\{0\}$ as follows:
\begin{itemize}
	\item $X^{(0)}=X$,
	\item $X^{(n)}=(X^{(n-1)})^*$ for $n\in\en$.
\end{itemize}
Further, if $X$ is a complex Banach space, we denote by $X_R$ the real version of $X$, i.e., the same space considered over $\er$ (we just forget multiplication by imaginary numbers). 

Then the spaces $X^{(n)}$, $(X^{(n)})_R$ and $(X_R)^{(n)}$ can be related as described in the following
proposition whose straightforward proof we omit.

\begin{prop}\label{recom} Let $X$ be a complex Banach space. For each $n\in\en\cup\{0\}$ let $\iota_n:X^{(n)}\to(X^{(n)})_R$ be the identity mapping. Further, let us define mappings $\psi_n:(X^{(n)})_R\to(X_R)^{(n)}$ by induction as follows:
\begin{itemize}
	\item $\psi_0$ is the identity of $X_R$.
	\item $\psi_n(f)(x)=\Re \iota_n^{-1}(f)(\iota_{n-1}^{-1}(\psi_{n-1}^{-1}(x)))$ for $f\in (X^{(n)})_R$, $x\in (X_R)^{(n-1)}$, $n\in\en$.
\end{itemize}
Then the following hold:
\begin{itemize}
	\item[(i)] $\iota_n$ is a real-linear surjective isometry for each $n\in\en\cup\{0\}$.
	\item[(ii)] $\psi_n$ is a linear onto isometry (of real Banach spaces) for each $n\in\en\cup\{0\}$.
	\item[(iii)] $\iota_n^{-1}(\psi_n^{-1}(f))(x)=f(\psi_{n-1}(\iota_{n-1}(x)))-i f(\psi_{n-1}(\iota_{n-1}(ix)))$
	for $f\in (X_R)^{(n)}$, $x\in X^{(n-1)}$, $n\in\en$.
	\item[(iv)] For each $n\in\en\cup\{0\}$, the mappings $\iota_n$, $\psi_n$ and $\psi_n\circ\iota_n$ are weak-to-weak homeomorphisms.
	\item[(v)] For each $n\in\en$, the mapping $\psi_n\circ\iota_n$ is a weak*-to-weak* homeomorphism.
	\item[(vi)] For each $n\in\en\cup\{0\}$ we have $\psi_{n+2}\circ\iota_{n+2}\circ\kappa_{X^{(n)}}
=	\kappa_{(X_R)^{(n)}}\circ\psi_n\circ\iota_n$, where $\kappa_Y$ denotes the canonical embedding of a (real or complex) Banach space $Y$ into $Y^{**}$.
\end{itemize}
\end{prop}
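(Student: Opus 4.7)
The plan is to proceed by induction on $n$, relying on a single classical fact: for any complex Banach space $Y$, the real-part map $f\mapsto \Re f$ is a surjective $\er$-linear isometry from $(Y^*)_R$ onto $(Y_R)^*$, with inverse $g\mapsto (y\mapsto g(y) - i g(iy))$. Every assertion in the proposition is an iteration of this identification up the dual tower, so the proof is essentially bookkeeping.

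Part (i) is immediate since $\iota_n$ only forgets multiplication by $i$, hence is an $\er$-linear isometric bijection by definition. For (ii) and (iii) I would handle the base cases $n=0$ (trivial) and $n=1$ (exactly the classical correspondence above), and then run the induction: the recursive definition of $\psi_n$ identifies $(X^{(n-1)})_R$ with $(X_R)^{(n-1)}$ via $\psi_{n-1}\circ\iota_{n-1}$ (an isometry of real Banach spaces by hypothesis) and then applies the real-part correspondence to $Y=X^{(n-1)}$. Formula (iii) is the inversion of this composed identification, obtained by substituting the classical inverse formula into the recursive definition.

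Parts (iv) and (v) will follow from the fact that weak and weak$^*$ topologies are determined by which (pre-)dual functionals are used in the pairing. The continuous real-linear functionals on $Y_R$ are precisely the real parts of continuous complex-linear functionals on $Y$, and this is exactly the correspondence realized by $\iota_n$ and $\psi_n$ at every level; hence the weak topologies agree under these maps. For weak$^*$ continuity (when $n\geq 1$), the analogous statement about evaluation at a fixed predual element transfers across $\psi_n\circ \iota_n$ by the same real-part identity, using that evaluation at $\psi_{n-1}(\iota_{n-1}(y))$ on $(X_R)^{(n)}$ corresponds by construction of $\psi_n$ to the real part of evaluation at $y$ on $X^{(n)}$.

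Part (vi) requires comparing, for $x\in X^{(n)}$ and $f\in(X_R)^{(n+1)}$, the value of $\iota_{n+2}^{-1}\psi_{n+2}^{-1}(\kappa_{(X_R)^{(n)}}\psi_n\iota_n x)$ at $f$ with the value of $\kappa_{X^{(n)}}(x)$ at $\iota_{n+1}^{-1}\psi_{n+1}^{-1}(f)$; expanding both sides with (iii) and collecting real and imaginary parts shows both reduce to the same expression in $f(\psi_n\iota_n x)$. The only real obstacle throughout is notational — the statement involves six maps indexed by $n$ — and the work amounts to propagating the classical complex/real duality through the iterated dual construction without losing track of subscripts.
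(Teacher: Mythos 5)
Your proposal is correct, and it is precisely the argument the authors have in mind: the paper explicitly omits the proof as ``straightforward,'' and your reduction of everything to the classical identification $f\mapsto\Re f$ of $(Y^*)_R$ with $(Y_R)^*$ (with inverse $g\mapsto(y\mapsto g(y)-ig(iy))$), propagated up the dual tower by induction, is exactly that routine verification. In particular your checks of (iii) by inverting the recursive definition and of (vi) by expanding both sides via (iii) and noting that the functionals in $(X_R)^{(n+1)}$ are real-valued are the right computations.
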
 
 
We continue by a transfer proposition for operators. If $X$ and $Y$ are complex Banach spaces and $T:X\to Y$ is a bounded linear operator, we denote by $T_R$ the same operator considered as an operator
from $X_R$ to $Y_R$. So, $T_R=\iota_{Y,0}\circ T\circ\iota_{X,0}^{-1}$ using the notation from Proposition~\ref{recom}. It is clear that $\|T_R\|=\|T\|$ and $(ST)_R=S_R T_R$ whenever $S:Y\to Z$ is a bounded linear operator from $Y$ to a complex Banach space $Z$.

Further, if $T:X\to Y$ is a bounded operator between two Banach spaces (real or complex, both of the same nature), we define the operators $T^{(n)}$ for $n\in\en\cup\{0\}$ inductively:
$T^{(0)}=T$ and $T^{(n)}=(T^{(n-1)})^*$ for $n\in\en$.
 
As above, we omit the straightforward proof of the following proposition.
 
\begin{prop}\label{rc-op} Let $X$ and $Y$ be complex Banach spaces and $T:X\to Y$ be a bounded linear operator.
Let $\iota_{X,n}$, $\psi_{X,n}$, $\iota_{Y,n}$ and $\psi_{Y,n}$ be the mappings from Proposition~\ref{recom} related to $X$ and $Y$, respectively. Then, for each $n\in\en\cup\{0\}$, we
have
$$\begin{aligned}
	(T_R)^{(2n)} &= \psi_{Y,2n}\circ (T^{(2n)})_R \circ \psi_{X,2n}^{-1} = 
	\psi_{Y,2n}\circ\iota_{Y,2n}\circ T^{(2n)} \circ \iota_{X,2n}^{-1}\circ \psi_{X,2n}^{-1}
	\\ (T_R)^{(2n+1)} &= \psi_{X,2n+1}\circ (T^{(2n+1)})_R \circ \psi_{Y,2n+1}^{-1}\\
	&= \psi_{X,2n+1}\circ\iota_{X,2n+1}\circ T^{(2n+1)} \circ \iota_{Y,2n+1}^{-1}\circ \psi_{Y,2n+1}^{-1}
\end{aligned}$$
\end{prop}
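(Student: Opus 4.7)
The plan is to prove the two formulas in Proposition~\ref{rc-op} by a joint induction on $n$, treating the even and odd cases in parallel. First observe that in each formula the second equality follows immediately from the definition of the $R$-version of an operator: applied to the complex linear operator $S = T^{(k)}$, we have $S_R = \iota_{Y^{(k)},0} \circ S \circ \iota_{X^{(k)},0}^{-1}$, and this identity can be lifted through the $n$ iterated duality steps so that $(T^{(2n)})_R = \iota_{Y,2n} \circ T^{(2n)} \circ \iota_{X,2n}^{-1}$ and similarly in the odd case. Hence only the first equalities require real work. The base case $n=0$ is trivial since $\psi_{X,0}$ and $\psi_{Y,0}$ are the identity maps on $X_R$ and $Y_R$, so the claim reduces to $T_R = T_R$.

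For the inductive step, the crucial lemma is the following standard identification: for any bounded complex linear $S\colon U\to V$,
$$(S_R)^* = \Psi_U \circ (S^*)_R \circ \Psi_V^{-1},$$
where $\Psi_Z\colon (Z^*)_R \to (Z_R)^*$ is the canonical real-linear isometry sending a complex linear functional $g$ to its real part $\Re g$. This is a one-line check: given $f \in (V_R)^*$, the corresponding complex linear functional is $g(y) = f(y) - if(iy)$ by the standard polarization identity, and then $(S_R)^*(f)(x) = f(Sx) = \Re g(Sx) = \Re(g\circ S)(x) = \Psi_U((S^*)_R(\Psi_V^{-1}f))(x)$ for every $x \in U_R$, which is precisely the assertion of Proposition~\ref{recom}(iii) read at level $n=1$ applied to $U$ and $V$.

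Now taking the adjoint of the formula for $(T_R)^{(2n)}$ yields
$$(T_R)^{(2n+1)} = (\psi_{X,2n}^{-1})^* \circ ((T^{(2n)})_R)^* \circ \psi_{Y,2n}^{*},$$
and one applies the lemma above to $S = T^{(2n)}$ to rewrite the middle factor $((T^{(2n)})_R)^*$ in terms of $(T^{(2n+1)})_R$ conjugated by the real-part identifications $\Psi_{X^{(2n)}}$ and $\Psi_{Y^{(2n)}}$. The recursive formula (iii) in Proposition~\ref{recom}, when unpacked, shows that $\psi_{X,2n+1}$ is precisely the composition $(\psi_{X,2n}^{-1})^* \circ \Psi_{X^{(2n)}}$ (and analogously for $Y$), so that the three intermediate isometries on each side collapse to $\psi_{X,2n+1}$ on the left and $\psi_{Y,2n+1}^{-1}$ on the right. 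This produces the claimed expression for $(T_R)^{(2n+1)}$; the step from $2n+1$ to $2n+2$ is identical after swapping the roles of $X$ and $Y$.

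The main obstacle is not conceptual but purely bookkeeping: one must keep track of which of the four flavours of spaces $(X_R)^{(k)}$, $(X^{(k)})_R$, $(Y_R)^{(k)}$, $(Y^{(k)})_R$ each intermediate map lives between, and correctly recognize the recursive definition of $\psi_{n}$ in Proposition~\ref{recom} as the composition of the real-part identification with the inverse adjoint of $\psi_{n-1}$. Once this structural description of $\psi_n\circ\iota_n$ as the canonical isometry between complex and real iterated duals is in hand, the induction goes through mechanically, which is presumably why the authors labelled the proof straightforward and omitted it.
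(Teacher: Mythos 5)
Your proof is correct: the identification $\psi_{n}=(\psi_{n-1}^{-1})^*\circ\Psi_{X^{(n-1)}}$ extracted from the recursive definition in Proposition~\ref{recom}, together with the one-line lemma $(S_R)^*=\Psi_U\circ(S^*)_R\circ\Psi_V^{-1}$, makes the induction by taking adjoints go through exactly as you describe, and the second equalities are indeed just the definition of $(\cdot)_R$ applied to $T^{(2n)}$ and $T^{(2n+1)}$. The paper explicitly omits this proof as straightforward, so there is nothing to compare against; your argument is the natural one the authors intended.
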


\subsection{Some topologies on a Banach space}
We need to define the necessary quantities. We will deal with several types of quantities -- those measuring how far is a given sequence or a net from being Cauchy, those measuring how far is a given operator from being continuous or sequentially continuous, measures of non-compactness and weak non-compactness of a set and, finally, measures of non-compactness and weak non-compactness of an operator.

We can measure non-cauchyness and non-continuity with respect to various topologies. So, we will 
give the definitions in an abstract way because we will deal with several different topologies.
Therefore we fix the following notation.

Let $X$ be a Banach space. If $F\subset X^*$ is a bounded set, let $q_F$  be the seminorm on $X$
defined by
$$q_F(x)=\sup \{ |x^*(x)| : x^*\in F\},\quad x\in X,$$
with the convention that supremum of the empty set is $0$. 

Let $\F$ be a family of subsets of the closed unit ball $B_{X^*}$ of the dual space $X^*$. Let $\tau_{\F}$ be the locally convex topology on $X$ generated by the family of seminorms $\{q_F:F\in\F\}$. In other words, $\tau_F$ is the topology of uniform convergence on the sets from $\F$.

We will work with three different families $\F$ -- the family $\F_1$  formed by all the subsets of $B_{X^*}$, $\F_2$  formed by all finite subsets of $B_{X^*}$ and $\F_3$  formed by all weakly compact subsets of $B_{X^*}$. Then $\tau_{\F_1}$ is the norm topology and $\tau_{\F_2}$ is the weak topology which we will denote by $w$. Finally, $\tau_{\F_3}$ is the restriction to $X$ of the Mackey topology on $X^{**}$ associated to the dual pair $(X^{**},X^*)$. This topology is called the \emph{Right topology} in \cite{Right,kacena}. We will denote this topology by $\rho_X$ or simply $\rho$ when $X$ is obvious.

If $X$ is a dual space, say $X=Y^*$, we define two more topologies by means of families in $B_Y$ 
(which we consider canonically embedded into $B_{Y^{**}}=B_{X^*}$). Let $\F_4$ be the family of all finite sets in $B_Y$ and $\F_5$ be the family of all weakly compact sets in $B_Y$. Then $\tau_{\F_4}$ is the weak* topology and $\tau_{\F_5}$ is the Mackey topology with respect to the dual pair $(Y^*,Y)$. We write $\rho^*_{Y^*}$ or $\rho^*$ for the topology $\tau_{\F_5}$.

In the sequel we mean by $\F$ any family of subsets of $B_{X^*}$.

The following important observation asserts that, for a complex Banach space $X$, the considered topologies coincide for $X$ and $X_R$ as well as for $X^*$ and $(X_R)^*$. Indeed, the norms in $X$ and $X_R$ are the same, the weak topology of $X$ coincides with that of $X_R$ 
(by Proposition~\ref{recom}(iv)). Further, the $\rho$-topology of $X$ coincides with that of $X_R$ as well. Indeed, let $\psi_n$ and $\iota_n$ be as in Proposition~\ref{recom}. Since $\psi_1\circ\iota_1$ is a weak-to-weak homeomorphisms, it preserves weakly compact sets. So, let $F\subset B_{X^*}$ be weakly
compact. Then obviously $q_{\psi_1(\iota_1(F))}(\iota_0x)\le q_F(x)$ for each $x\in X$ (by the very definition of $\psi_1$).
Moreover, if $F$ is absolutely convex (or at least stable by multiplying with any complex unit), then $q_{\psi_1(\iota_1(F))}(\iota_0x)= q_F(x)$ for each $x\in X$.

Since $\psi_1\circ \iota_1$ is a weak*-to-weak* homeomorphism by Proposition~\ref{recom}, weak* topology on $X^*$ coincides with the weak* topology on $(X_R)^*$. Further, similarly as for $\rho$ we obtain that $\psi_1\circ \iota_1$ is a $\rho^*$-to-$\rho^*$ homeomorphism as well.

\subsection{Quantifying non-cauchyness of sequences and nets}

Let $(x_\nu)_{\nu\in \Lambda}$ be a bounded net in $X$ indexed by a directed set $\Lambda$.
We set
\begin{equation*}
\ca[\F]{x_\nu}=\sup_{F\in\F} \inf_{\nu_0\in\Lambda} \sup\{ q_F\left(x_\nu-x_{\nu'}\right)	:
\nu,\nu'\in \Lambda, \nu\ge \nu_0, \nu'\ge \nu_0\}.
\end{equation*}
This quantity measures in a way how far the net $(x_\nu)$ is from being $\tau_{\F}$-Cauchy. 
In particular, $\ca[\F]{x_\nu}=0$ if and only if the net $(x_\nu)$ is $\tau_{\F}$-Cauchy.
It is easy to check that the quantity $\ca[\F]{\cdot}$ remains the same if we replace $\F$ by the family of all finite unions
of elements of $\F$.

The quantity $\ca[\F_2]{x_\nu}$ will be denoted by $\de{x_\nu}$. This quantity for sequences was used
already in \cite{pfitzner-inve,wesecom,qschur}. It is easy to see that $\de{x_\nu}$ is the diameter of the set of all weak$^*$ cluster points of the net $(x_\nu)$ in $X^{**}$ (we consider  $X$ canonically embedded into $X^{**}$).

The quantity $\ca[\F_1]{x_\nu}$ will be denoted simply by $\ca{x_\nu}$. This quantity for sequences was used in \cite{qschur}. The quantity $\ca[\F_3]{x_\nu}$ will be denoted by $\ca[\rho]{x_\nu}$, while the quantity $\ca[\F_5]{x_\nu^*}$ considered for a bounded net $(x_\nu^*)$ in the dual space will be denoted by $\ca[\rho^*]{x_\nu^*}$.

An important variant of these quantities is the following one. Let $(x_k)$ be a bounded sequence 
in $X$. We set
\begin{equation*}
\wca[\F]{x_k}=\inf\{ \ca[\F]{x_{k_n}} : (x_{k_n}) \mbox{ is  a subsequence of }(x_k) \}.
\end{equation*}

We will denote again the quantities $\wca[\F_1]{\cdot}$, $\wca[\F_2]{\cdot}$, $\wca[\F_3]{\cdot}$ and $\wca[\F_5]{\cdot}$
 by $\wca{\cdot}$, $\wde{\cdot}$, $\wca[\rho]{\cdot}$ and $\wca[\rho^*]{\cdot}$, respectively. Let us remark that
the quantity $\wde{\cdot}$ was used in \cite{pfitzner-inve,wesecom}.

\begin{rem} One may wonder whether the quantities $\wca[\F]{\cdot}$ should be defined using subsequences or subnets. We remark that we are using subsequences in purpose. In fact, if we defined
$\wca{\cdot}$ using subnets, we would obtain the same quantity. However, if $\wde{\cdot}$ was defined
using subnets, it would be always zero, as any bounded sequence (or even a net) in $X$ has a weakly Cauchy subnet, due to the weak* compactness of the bidual unit ball.
\end{rem} 

If $X$ is a complex Banach space, then all the quantities $\ca{\cdot}$, $\de{\cdot}$ and $\ca[\rho]{\cdot}$ are the same in $X$ and in $X_R$. For $\ca{\cdot}$ it is obvious, for $\de{\cdot}$ it is explained in \cite[Section 5]{wesecom} and it follows from Proposition~\ref{recom} using the fact that $\de{x_\nu}$ is the diameter of the weak* cluster points of $(x_\nu)$ in $X^{**}$. The equality for $\ca[\rho]{\cdot}$ follows from the easy fact that in the definition of $\ca[\rho]{\cdot}$ it is enough to take the sup over absolutely convex sets $F$ using the last paragraph of the previous subsection.

Now it is obvious that also the quantities $\wca{\cdot}$, $\wde{\cdot}$, $\wca[\rho]{\cdot}$ are the same in $X$ and in $X_R$.

Analogously we obtain that the quantity $\ca[\rho^*]{\cdot}$ remains the same in $X^*$ and $(X_R)^*$.

\subsection{Quantifying continuity and sequential continuity}

Let $X$ and $Y$ be Banach spaces. By an \emph{operator} $T:X\to Y$ we mean a bounded linear operator.
This operator is, by definition, norm-to-norm continuous. It is also weak-to-weak continuous (as $y^*\circ T$ is weakly continuous for each $y^*\in Y^*$) and $\rho$-to-$\rho$ continuous (by \cite[Lemma 12]{Right}). 

We will deal with operators which are $\rho$-to-norm continuous, $\rho$-to-norm sequentially continuous
and weak-to-norm sequentially continuous. 

Let us remark that $\rho$-to-norm continuous operators are exactly weakly compact operators. This is proved in \cite[Corollary 5]{Right}. A similar result was proved already by A.~Grothendieck. Indeed, he proved in \cite[Lemme~1]{gro} that $T$ is weakly compact if and only if $T^*$ is $\rho^*$-to-norm continuous. Note that using the Gantmacher theorem this yields one implication of the mentioned result of \cite{Right}. In Theorem~\ref{q-spojitost} below, we will prove quantitative versions of both of these results.

Weak-to-norm sequentially continuous operators are usually called \emph{completely continuous},  $\rho$-to-norm sequentially continuous operators are called \emph{pseudo weakly compact} in \cite{Right,kacena}.

For an operator $T:X\to Y$ we define the following quantities:
\begin{eqnarray*}
\cont[\F]{T}&=&\sup \{ \ca{Tx_\nu} : (x_\nu) \mbox{ is a $\tau_\F$-Cauchy net in }B_X \}, \\
\cc[\F]{T}&=&\sup \{ \ca{Tx_k} : (x_k) \mbox{ is a $\tau_\F$-Cauchy sequence in }B_X \}.
\end{eqnarray*}
Then $\cont[\F]{T}$ measures how far the mapping $T|_{B_X}$ is from being $\tau_\F$-to-norm continuous.
We will consider this quantity  for $\F_3$ and $\F_5$ and denote it as $\cont{T}$ and $\cont[\rho^*]{T}$, respectively (the quantity $\cont[\rho^*]{T}$ can be considered in case $X$ is a dual space).
It follows from \cite[Corollary 5]{Right} that $T$ is $\rho$-to-norm continuous if and only if the restriction $T|_{B_X}$ is $\rho$-to-norm continuous. Thus $\cont{T}=0$ if and only if $T$ is $\rho$-to-norm continuous (which takes place if and only if $T$ is weakly compact).	

Further, as any $\tau_\F$-Cauchy sequence is bounded, it is clear that $\cc[\F]{T}=0$ if and only if $T$ is $\tau_\F$-to-norm sequentially continuous. The quantity $\cc[\F_3]{T}$ will be denoted by $\cc[\rho]{T}$. By taking $\F=\F_2$ we get an important quantity measuring how far the operator is from being completely continuous; we denote it as $\cc{T}$, i.e.,
\[
\cc{T}=\cc[\F_2]{T}.
\] 
Similarly as above, for an operator $T$ on a dual space $X$, the quantity $\cc[\F_5]{T}$ will be denoted by $\cc[\rho^*]{T}$.

Let us remark that obviously we have
\begin{equation}
		\label{eq:cc-cont} \cc[\F]{T}\le\cont[\F]{T}
\end{equation} 
for each operator $T$.	

We finish this subsection by noticing that, if $X$ and $Y$ are complex Banach spaces
and $T:X\to Y$ is a bounded linear operator, then 
$$\cont[\rho]{T}=\cont[\rho]{T_R},\quad \cc[\rho]{T}=\cc[\rho]{T_R},\quad \cc{T}=\cc{T_R}.$$
Similarly, if $S:Y^*\to X$ is a bounded linear operator, then
\[
\cont[\rho^*]{S}=\cont[\rho^*]{S_R},\quad \cc[\rho^*]{S}=\cc[\rho^*]{S_R}.
\]
This follows immediately from the final remarks of the previous subsection.

\subsection{Measuring  non-compactness and weak non-compactness of sets}
The\-re are several ways how to measure non-compactness and weak non-compactness of a subset of a Banach space. Almost all of them need the following notation:  If $A$ and $B$ are two nonempty subsets of a Banach space $X$, we set
\begin{eqnarray*}
	\dd(A,B)&=&\inf\{\|a-b\|: a\in A, b\in B\}, \\
	\dh(A,B)&=&\sup\{\dd(a,B) : a\in A\}.
\end{eqnarray*}
Hence, $\dd(A,B)$ is the ordinary distance of the sets $A$ and $B$ and $\dh(A,B)$ is the non-symmetrized Hausdorff distance (note that the Hausdorff distance of $A$ and $B$ is equal to $\max\{\dh(A,B),\dh(B,A)\}$).

Let $A$ be a bounded subset of a Banach space $X$. Then the Hausdorff measure of non-compactness
of $A$ is defined by
\begin{equation*}
	\chi(A)=\inf\{\dh(A,F): \emptyset\ne F\subset X\mbox{ finite}\} =
	\inf\{\dh(A,K): \emptyset\ne K\subset X\mbox{ compact}\}.
\end{equation*}
The Kuratowski measure of non-compactness of $A$ is
\begin{equation*}
	\alpha(A)=\inf\{\varepsilon>0:\mbox{ there is a finite cover of $A$ by sets of diameter less than }\varepsilon\}.
\end{equation*}
We will need one more measure of non-compactness:
\begin{equation*}
 \beta(A)=\sup\{\wca{x_k} : (x_k)\mbox{ is a sequence in }A\}.
\end{equation*}
Hausdorff and Kuratowski measures of non-compactness are well known, the notation used in the literature
is not unified. It is easy to check that for any bounded set $A\subset X$ we have
\begin{equation}
		\label{eq:compact} \chi(A)\le\beta(A)\le\alpha(A)\le 2\chi(A),
\end{equation}
thus the three measures are equivalent. (And, of course, these measures equal zero if and only if the respective set is relatively compact.)

An analogue of Hausdorff measure of non-compactness for measuring weak non-compactness is
the de Blasi measure of weak non-compactness
\begin{equation*}
	\omega(A)=\inf\{\dh(A,K) : \emptyset\ne K\subset X\mbox{ is weakly compact}\}.	
\end{equation*}
Then $\omega(A)=0$ if and only if $A$ is relatively weakly compact. Indeed, the `if' part is obvious and the `only if' part follows from \cite[Lemma 1]{deblasi}.

There is another set of quantities measuring weak non-compactness. Let us name some of them:
\begin{eqnarray*}
\wk{A}&=&\dh(\wscl{A},X),\\
\wck{A}&=&\sup\{\ \dd(\clu{X}{x_k},X) : (x_k)\mbox{ is a sequence in }A\}, \\	
\gamma(A)&=&\sup\{\ |\lim_n\lim_m x^*_m(x_n)- \lim_m\lim_n x^*_m(x_n)|:      \\ && \qquad\qquad (x^*_m)\mbox{ is a sequence in } B_{X^*},  (x_n) \mbox{ is a sequence in } A  \\ && \qquad\qquad\mbox{ and all the involved limits exist}\}. 	
\end{eqnarray*}
By $\wscl{A}$ we mean the weak$^*$ closure of $A$ in $X^{**}$ (the space $X$ is canonically embedded in $X^{**}$) and $\clu{X}{x_k}$ is the set of all weak$^*$ cluster points in $X^{**}$ of  the sequence $(x_k)$. These quantities were used (explicitly or implicitly) for example in \cite{AC-meas,AC-jmaa,CKS,f-krein,Gr-krein} using different types of notation and terminology.
The quantity $\gamma$ corresponds to the Eberlein double limit criterion for weak compactness. 
It follows from \cite[Theorem 2.3]{AC-meas} that for any bounded subset $A$ of a Banach space $X$
we have
\begin{eqnarray}
\label{eq:wcomp1} &\wck{A}\le\wk{A}\le\gamma(A)\le 2\wck{A}, \\
\label{eq:wcomp2} &\wk{A}\le \omega(A).
\end{eqnarray}

So, putting together these inequalities with measures of norm non-compactness we obtain the
following diagram:
\begin{equation}
	\label{eq:diagrammsets}
	\begin{array}{cccccccccc}
	&&\chi(A)&\le&\beta(A)&\le&\alpha(A)&\le&2\chi(A) \\&&\upleq&&&&&&& \\
	&&\omega(A)&&&&&&& \\ &&\upleq&&&&&&& \\
	\wck{A}&\le&\wk{A}&\le&\gamma(A)&\le& 2\wck{A}&&
\end{array}
\end{equation}

Let us remark that the inequality $\omega(A)\le\chi(A)$ is obvious and that the quantities $\omega(\cdot)$ and $\wk{\cdot}$ are not equivalent, see \cite{tylli-cambridge,AC-meas}.
Below we show that these quantities in some spaces are equivalent. 

Their non-equivalence is illustrated also by the following theorem.

\begin{thm}\label{t:swcg} Let $X$ be a Banach space.
\begin{itemize}
	\item The space $X$ is weakly compactly generated if and only if
	$$ \forall\,\varepsilon>0\, \exists\, (A_n)_{n=1}^\infty \mbox{ a cover of }X\, \forall n\in\en: \omega(A_n)<\varepsilon.$$
	\item The space $X$ is isomorphic to a subspace of a weakly compactly generated space if and only if
	$$ \forall\,\varepsilon>0\, \exists\, (A_n)_{n=1}^\infty \mbox{ a cover of }X\, \forall n\in\en: \wk{A_n}<\varepsilon.$$
\end{itemize}
\end{thm}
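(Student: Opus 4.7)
\smallskip\noindent\textbf{Proof plan.}

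For the first equivalence, the $(\Rightarrow)$ direction is routine and follows from Krein's theorem. If $K\subset X$ is a weakly compact set with $\overline{\span}\, K=X$, I would take $K$ absolutely convex, so that $\bigcup_n nK$ is norm-dense in $X$; then the sets $A_n:=nK+\varepsilon B_X$ cover $X$ and satisfy $\omega(A_n)\le\varepsilon$. For $(\Leftarrow)$, apply the hypothesis with $\varepsilon=1/k$ to obtain covers $(A^k_n)_n$ with $\omega(A^k_n)<1/k$; pick absolutely convex weakly compact $K^k_n\subset X$ with $\dh(A^k_n,K^k_n)<1/k$; choose scalars $c_{k,n}>0$ small enough that $c_{k,n}K^k_n\subset 2^{-k-n}B_X$; and set $L:=\{0\}\cup\bigcup_{k,n}c_{k,n}K^k_n$. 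Any sequence in $L$ either meets some piece $c_{k,n}K^k_n$ infinitely often (giving a weakly convergent subsequence there by weak compactness) or admits a subsequence with norms tending to $0$, so $L$ is relatively weakly compact by the Eberlein--\v{S}mulian theorem. Since $\dh(A^k_n,K^k_n)<1/k$ combined with $X=\bigcup_n A^k_n$ forces $\bigcup_{k,n}K^k_n$ to be norm-dense in $X$, we have $\overline{\span}\, L=X$, so $\overline{L}^w$ is a weakly compact generator and $X$ is WCG.

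For the $(\Rightarrow)$ direction of the second equivalence, let $X\hookrightarrow Y$ with $Y$ WCG and weakly compact absolutely convex generator $K\subset Y$, and set $A_n:=X\cap(nK+\varepsilon B_Y)$. These cover $X$, and I claim $\wk{A_n}\le 2\varepsilon$ (whence replacing $\varepsilon$ by $\varepsilon/3$ yields the strict inequality). Fix $x^{**}\in\wscl{A_n}$ as the $\sigma(X^{**},X^*)$-limit of a net $(x_\alpha)\subset A_n$; since the canonical embedding $j^{**}:X^{**}\to Y^{**}$ is $w^*$-to-$w^*$ continuous and isometric, $j^{**}(x^{**})$ is the $\sigma(Y^{**},Y^*)$-limit of $(x_\alpha)$. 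Writing $x_\alpha=y_\alpha+z_\alpha$ with $y_\alpha\in nK$ and $\|z_\alpha\|\le\varepsilon$, and passing to a subnet with $y_\alpha\to y_0\in nK$ weakly (using weak compactness of $nK$ in $Y$), I obtain $j^{**}(x^{**})-y_0\in\varepsilon B_{Y^{**}}$. The crucial observation is that $y_\alpha=x_\alpha-z_\alpha\in X+\varepsilon B_Y$, which is convex and norm-closed, hence weakly closed, so $y_0\in X+\varepsilon B_Y$; choosing $x\in X$ with $\|y_0-x\|\le\varepsilon$ yields $\|j^{**}(x^{**})-x\|\le 2\varepsilon$, i.e., $\dd(x^{**},X)\le 2\varepsilon$.

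The $(\Leftarrow)$ direction of the second equivalence is the main obstacle. Given covers $(A^k_n)$ with $\wk{A^k_n}<1/k$, the sets $V^k_n:=\wscl{A^k_n}$ are $w^*$-compact in $X^{**}$ and contained in $X+(1/k)B_{X^{**}}$; however they are only $w^*$-compact, not weakly compact in any natural ambient space, so the amalgamation argument used for the first equivalence does not transfer directly. My plan is to invoke the Amir--Lindenstrauss characterization of subspaces of WCG spaces via Eberlein compactness of $(B_{X^*},w^*)$: using the proximity of each $V^k_n$ to $X$, I would construct a point-countable $T_0$-separating family of $w^*$-continuous (equivalently, $X$-valued) functions on $B_{X^*}$, thereby realizing $(B_{X^*},w^*)$ as an Eberlein compact and $X$ as a subspace of the WCG space $C\bigl((B_{X^*},w^*)\bigr)$ via the canonical isometric embedding. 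Converting the quantitative small-$\wk$ defect of the $V^k_n$ into the point-finiteness property of an Eberlein family is the technically most delicate step.
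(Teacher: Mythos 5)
Your treatment of the first bullet is complete and correct, and it is essentially the argument the paper has in mind (the paper derives it from the fact that $X$ is WCG iff it has a norm-dense weakly $\sigma$-compact subset; your set $L=\{0\}\cup\bigcup_{k,n}c_{k,n}K^k_n$ is exactly such a subset, and the Eberlein--\v{S}mulian case analysis is sound). The forward direction of the second bullet is also correct in substance, with one small slip: $X+\varepsilon B_Y$ need not be norm-closed, since the distance from a point of $Y$ to the closed subspace $X$ need not be attained. Replace it by $\{y\in Y:\dd(y,X)\le\varepsilon\}$, which \emph{is} norm-closed and convex, hence weakly closed; your net argument then gives $\dd(y_0,X)\le\varepsilon$ and the estimate $\dd(x^{**},X)\le 2\varepsilon$ goes through unchanged. (You should also say a word about the isomorphism constant when the embedding $X\hookrightarrow Y$ is not isometric, but that is absorbed trivially since $\varepsilon$ is arbitrary.)

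The genuine gap is the converse of the second bullet, and you have correctly identified it yourself: your final paragraph is a plan, not a proof. Producing from the sets $V^k_n=\wscl{A^k_n}\subset X+\frac1k B_{X^{**}}$ a point-countable (indeed $\sigma$-point-finite) $T_0$-separating family of $w^*$-continuous functions on $B_{X^*}$ --- equivalently, a suitable Markushevich-basis-type structure witnessing that $(B_{X^*},w^*)$ is Eberlein --- is precisely the content of the theorem of Fabian, Montesinos and Zizler (\emph{A characterization of subspaces of weakly compactly generated Banach spaces}, J. London Math. Soc. 2004), which is what the paper cites for this implication; it is not a step that can be left as "the technically most delicate step". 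The obstruction you name is real: the $V^k_n$ are only $w^*$-compact in $X^{**}$, so the scaling-and-union trick from the first bullet has no analogue, and one cannot simply build a weakly compact generator inside $X$ (indeed subspaces of WCG spaces need not be WCG). Unless you either carry out the Eberlein/point-finiteness construction in detail or explicitly invoke the Fabian--Montesinos--Zizler result, this direction remains unproved.
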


Recall that $X$ is \emph{weakly compactly generated} if it admits a weakly compact subset whose linear span is dense in $X$. The first statement is an easy consequence of the fact that $X$ is weakly compactly generated if and only if it admits a norm-dense weakly $\sigma$-compact subset. The second statement is a result of \cite{f-subswcg}.

We finish this subsection again by a discussion on complex and real spaces. Let $X$ be a complex space.
Since all the measures of non-compactness $\chi(\cdot)$, $\alpha(\cdot)$ and $\beta(\cdot)$ use only the metric structure of $X$, they are the same in $X$ and in $X_R$. 

The quantity $\omega(\cdot)$ is also the same in $X$ and in $X_R$ as weak compact sets are the same and
the metric structure is the same. Further, quantities $\wk[]{\cdot}$ and $\wck[]{\cdot}$ are also the same in $X$ and in $X_R$ by Proposition~\ref{recom} (cf. also \cite[Section 5]{wesecom}). Finally, the quantity $\gamma(\cdot)$ is also the same for $X$ and for $X_R$. Indeed, let $A\subset X$ be bounded.
Let us show first that $\gamma(\iota_0(A))\le\gamma(A)$. 
Let $(x_n)$ be a sequence in $\iota_0(A)$ and $(x_m^*)$ a sequence in $B_{(X_R)^*}$ such that 
both $\lim_n \lim_m x_m^*(x_n)$ and $\lim_m \lim_n x_m^*(x_n)$ exist. Let $y_n=\iota_0^{-1}(x_n)$ 
and $y_m^*=\iota_1^{-1}(\psi_1^{-1}(x_m^*))$. By Proposition~\ref{recom}, $(y_m^*)$ is a sequence in
$B_{X^*}$ and for any $m,n\in\en$ we have
$y_m^*(y_n)=x_m^*(x_n)-ix_m^*(\iota_0(i\iota_0^{-1}(x_n)))$. Without loss of generality we can suppose that
both $\lim_n \lim_m x_m^*(\iota_0(i\iota_0^{-1}(x_n)))$ and $\lim_m \lim_n x_m^*(\iota_0(i\iota_0^{-1}(x_n)))$ exist. Then
$$|\lim_n \lim_m x_m^*(x_n)-\lim_m \lim_n x_m^*(x_n)|\le|\lim_n \lim_m y_m^*(y_n)-\lim_m \lim_n y_m^*(y_n)|\le\gamma(A).$$
By taking the supremum we get $\gamma(\iota_0(A))\le\gamma(A)$.

Conversely, suppose $\gamma(A)>c$. Fix $(x_n)$  a sequence in $A$ and $(x_m^*)$ a sequence in $B_{X^*}$ such that 
$$|\lim_n \lim_m x_m^*(x_n)-\lim_m \lim_n x_m^*(x_n)|>c$$
and all the limits involved exist. Let $\alpha$ be a complex unit such that
$$|\lim_n \lim_m x_m^*(x_n)-\lim_m \lim_n x_m^*(x_n)|=\alpha(\lim_n \lim_m x_m^*(x_n)-\lim_m \lim_n x_m^*(x_n)).$$
Then
\begin{multline*}
\lim_n \lim_m \psi_1(\iota_1(\alpha x_m^*))(\iota_0 x_n)-\lim_m \lim_n \psi_1(\iota_1(\alpha x_m^*))(\iota_0 x_n) \\
=|\lim_n \lim_m x_m^*(x_n)-\lim_m \lim_n x_m^*(x_n)|>c,
\end{multline*}
hence $\gamma(\iota_0(A))>c$, which gives $\gamma(A)\le\gamma(\iota_0(A))$.

\subsection{Measuring non-compactness and weak non-compactness of operators}

An operator $T:X\to Y$ is compact (weakly compact) if $T(B_X)$ is a relatively compact
(relatively weakly compact, respectively) subset of $Y$. Therefore, if we want to measure how far a given operator is from being compact (weakly compact), we can use one of the measures of non-compactness
(weak non-compactness) defined in the previous section. To simplify the notation we adopt the following convention. By a quantity applied to $T$ we mean this quantity applied to $T(B_X)$. So, in particular,
$\chi(T)$, $\omega(T)$ and $\wk[Y]{T}$ denote $\chi(T(B_X))$, $\omega(T(B_X))$ and $\wk[Y]{T(B_X)}$, respectively. Due to the previous subsection, these quantities are the same for $T$ and $T_R$ in case $X$ and $Y$ are complex spaces. 
	
Another possibility is to measure the distance to compact (weakly compact) operators. The distance of $T$ to the space of compact operators is denoted by $\|T\|_K$ and is called the \emph{essential norm}
of $T$. The distance to the space of weakly compact operators is denoted by $\|T\|_w$ and is called
\emph{weak essential norm}.

By the Schauder theorem, $T$ is compact if and only if $T^*$ is compact. Similarly, the Gantmacher theorem says that $T$ is weakly compact if and only if $T^*$ is weakly compact. Both theorems have quantitative versions, as for any operator $T$ we have
\begin{eqnarray}
		\label{eq:Schauder} \frac12\chi(T)\le\chi(T^*)\le2\chi(T), \\
		\label{eq:Gant-gamma} \gamma(T)\le\gamma(T^*)\le 2\gamma(T).
\end{eqnarray}
The inequality \eqref{eq:Schauder} is a result of \cite{GM}, the inequality \eqref{eq:Gant-gamma}
is proved in \cite[Theorem~3.1]{AC-meas}. By combining \eqref{eq:Gant-gamma} with \eqref{eq:wcomp1}
we get
\begin{equation}
		\label{eq:Gant-dh} \frac12 \wk[Y]{T}\le \wk[X^*]{T^*} \le 2\wk[Y]{T}.
\end{equation}
These results were originally proved for real spaces. However, they hold for complex spaces as well, due to the fact that the quantities $\chi(\cdot)$, $\gamma(\cdot)$ and $\wk[]{\cdot}$ are the same for $T^*$ and for $(T_R)^*$.
Indeed, using Proposition~\ref{rc-op} and Proposition~\ref{recom} we get
$$(T_R)^*(B_{(Y_R)^*})=\psi_{X,1}(\iota_{X,1}(T^*(\iota_{Y,1}^{-1}(\psi_{Y,1}^{-1}(B_{(Y_R)^*})))))
=\psi_{X,1}(\iota_{X,1}(T^*(B_{Y^*}))).$$
So, using again Proposition~\ref{recom}, we see that the quantities $\chi(\cdot)$ and $\wk[]{\cdot}$ (and also $\alpha(\cdot)$, $\beta(\cdot)$ and $\wck[]{\cdot}$) are the same for $T^*$ and $(T_R)^*$. Further, the quantity $\gamma(\cdot)$ is also the same, as by the previous section
$\gamma(\iota_{X,1}(T^*(B_{Y^*})))=\gamma(T^*(B_{Y^*}))$ and $\psi_{X,1}$ is just a linear isometry of real spaces.  	

We have thus the following diagrams:
\begin{equation}
		\label{eq:diagramm-operators}
		\begin{array}{ccc}
		\chi(T)&\le&\|T\|_K \\ \upsim&&\upleq \\ \chi(T^*)&\le&\|T^*\|_K
		\end{array} \qquad
		\begin{array}{ccccc}
		\wk[Y]{T}&\le&\omega(T)&\le& \|T\|_w\\
		\upsim & & & & \upleq \\
		\wk[X^*]{T^*}&\le&\omega(T^*)&\le&\|T^*\|_w
		\end{array}
\end{equation}

The exact meaning of the equivalence sign is given by \eqref{eq:Schauder} and \eqref{eq:Gant-dh}.
The other inequalities are either trivial or a consequence of the Schauder and the Gantmacher theorem.
In general, there are no other inequalities (even including a multiplicative constant). For the first diagram it follows from \cite{tylli-jfa,tylli-israel}, for the second one from \cite{tylli-cambridge,tylli-pams}. In particular, the quantities $\omega(T)$ and $\omega(T^*)$ are in general incomparable.	

%%%%%%%%%%%%%%%%%%%%%%%%%%%%%%%%%%%%%%
\section{Easy quantitative implications}\label{sec-easy}

Any compact operator is obviously weakly compact. Further, any compact operator is easily seen to be completely continuous. It is also easy to see that any operator which is either weakly compact or completely continuous maps weakly Cauchy sequences to weakly convergent sequences. Such operators are
called \emph{weakly completely continuous}. We have thus the following implications:
\begin{equation*}
\begin{array}{ccc}
	T\mbox{ is compact} &\Rightarrow & T\mbox{ is completely continuous} \\
	\Downarrow &&\Downarrow \\
		T\mbox{ is weakly compact} &\Rightarrow & T\mbox{ is weakly completely continuous} 
\end{array}	
\end{equation*}

These implications have quantitative versions. We have already defined quantities measuring how far a given operator is from being compact, weakly compact or completely continuous. To formulate all the inequalities, we need to define, for a given operator $T:X\to Y$, the following two quantities:
$$\begin{aligned}
\wcc(T) & =  \sup \{ \dh(\clu{Y}{Tx_k},Y) : (x_k)\mbox{ is a weakly Cauchy sequence in }B_X \} \\
&=	\sup \{ \wk[Y]{\{Tx_k:k\in\en\}} : (x_k)\mbox{ is a weakly Cauchy sequence in }B_X \},\\
\wcc_\omega(T)&= 	\sup \{ \omega(\{Tx_k:k\in\en\}) : (x_k)\mbox{ is a weakly Cauchy sequence in }B_X \}.
\end{aligned}$$

The promised quantitative versions of the above implications are contained in the following table:

\begin{equation}
		\label{eq:q-easyimp}
		\begin{array}{ccccc}
\cc{T}&\sleq&	\chi(T)&\le&\|T\|_K \\
\upleq&&\upleq&&\upleq\\
\wcc_{\omega}(T)&\le&\omega(T)&\le&\|T\|_w\\
\upleq&&\upleq&&\\
\wcc(T)&\le&\wk[Y]{T}&&
\end{array}
\end{equation}

The sign $\sleq$ means that the inequality holds with a universal positive multiplicative constant which in this case is $4$ by~\eqref{eq:ccchi}.

Most of the inequalities included in the diagram are easy and are immediate consequence of the inequalities \eqref{eq:diagrammsets} and \eqref{eq:diagramm-operators}. We will prove the remaining two inequalities, i.e.,
\begin{eqnarray}
		\label{eq:ccchi} \cc{T}&\le&4\chi(T), \\ 
		\label{eq:wccc} \wcc_\omega(T)&\le&\cc{T}.
\end{eqnarray}

To prove the first one we need the following lemma.

\begin{lemma}
Let $X$ be a Banach space and $(x_k)$ be a weakly Cauchy sequence in~$X$. Let $c>0$ be such that $\ca{x_k}>c$. Then there is a subsequence $(x_{k_n})$ such that $\wca{x_{k_n}}\ge\frac c2$.
\end{lemma}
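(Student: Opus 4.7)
The plan is to exploit the canonical weak$^*$ cluster point $x^{**}\in X^{**}$ of the weakly Cauchy sequence $(x_k)$, defined by $x^{**}(x^*)=\lim_k x^*(x_k)$, and to observe that $\ca{x_k}$ is comparable, up to a factor of two, to the asymptotic distance $\limsup_k \|x_k-x^{**}\|$ measured in the bidual. Once this comparison is in hand, the required subsequence is simply one whose members stay at distance more than $c/2$ from $x^{**}$.

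First I would prove the two-sided estimate
\[
\limsup_k \|x_k-x^{**}\|\;\le\;\ca{x_k}\;\le\;2\limsup_k \|x_k-x^{**}\|.
\]
The upper inequality is a direct triangle inequality: $\|x_k-x_{k'}\|\le \|x_k-x^{**}\|+\|x_{k'}-x^{**}\|$, followed by supremum over $k,k'\ge k_0$ and infimum over $k_0$. For the lower inequality, I would use that for every $k$ and every $x^*\in B_{X^*}$,
\[
|x^*(x_k)-x^{**}(x^*)|=\lim_{k'}|x^*(x_k-x_{k'})|\le\limsup_{k'}\|x_k-x_{k'}\|,
\]
and then take the supremum over $x^*\in B_{X^*}$ on the left to obtain $\|x_k-x^{**}\|\le \limsup_{k'}\|x_k-x_{k'}\|$; a supremum over $k\ge k_0$ and infimum over $k_0$ conclude the bound.

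Applying the upper estimate, the hypothesis $\ca{x_k}>c$ gives $\limsup_k \|x_k-x^{**}\|>c/2$, so we can pass to a subsequence $(x_{k_n})$ with $\|x_{k_n}-x^{**}\|>c/2$ for every $n$. For any further subsequence $(x_{k_{n_j}})$ of $(x_{k_n})$, the weak$^*$ cluster point in the bidual remains $x^{**}$, and the lower estimate applied to this subsequence yields $\ca{x_{k_{n_j}}}\ge \limsup_j \|x_{k_{n_j}}-x^{**}\|\ge c/2$. Taking the infimum over all subsequences of $(x_{k_n})$ then delivers $\wca{x_{k_n}}\ge c/2$, as required.

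The main conceptual point is to recognize that weak Cauchyness provides the canonical proxy $x^{**}$ for a missing weak limit and that $\ca$ is, up to a factor of two, just the eventual norm distance to this proxy; after that, everything reduces to bookkeeping with the triangle inequality and the weak$^*$ formula for the bidual norm, and no Ramsey-type extraction or diagonal procedure is required.
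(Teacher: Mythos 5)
Your proof is correct and follows essentially the same route as the paper: both pass to the weak$^*$ limit $x^{**}$ in the bidual, use the triangle inequality to extract a subsequence staying at distance more than $c/2$ from $x^{**}$, and then use weak$^*$ lower semicontinuity of the bidual norm (your lower estimate; the paper's equality of diameters of a set and its weak$^*$ closure) to bound $\ca{\cdot}$ from below along every further subsequence. The two-sided comparison of $\ca{x_k}$ with $\limsup_k\|x_k-x^{**}\|$ is a clean way to package the argument, but it is the same underlying idea.
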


\begin{proof}
If $(x_k)$ is weakly Cauchy, it weak$^*$ converges to some $x^{**}\in X^{**}$. If $\ca{x_k}>c$, then
$$\forall n\in\en\,\exists k,l\ge n:\|x_k-x_l\|>c.$$
By the triangle inequality we get that
$$\forall n\in\en\,\exists k\ge n:\|x_k-x^{**}\|>\frac c2.$$
It follows that there is a subsequence $(x_{k_n})$ such that $\|x_{k_n}-x^{**}\|>\frac c2$ for each $n\in\en$. We claim that $\wca{x_{k_n}}\ge\frac c2$.

Indeed, let $(y_l)$ be any subsequence of $(x_{k_n})$ and $m\in\en$ be arbitrary. Then
$$\diam\{y_l: l\ge m\}=\diam\wscl{\{y_l: l\ge m\}}=\diam(\{y_l: l\ge m\}\cup\{x^{**}\})>\frac c2,$$
hence $\ca{y_l}\ge \frac c2$. This completes the proof.
\end{proof}

Now we are going to prove inequality \eqref{eq:ccchi}. Due to \eqref{eq:compact} it is enough to prove
\begin{equation}
		\label{eq:ccbeta} \cc{T}\le2\beta(T).
\end{equation}
If $\cc{T}=0$, the inequality is obvious. Suppose that $\cc{T}>0$ and fix any $c>0$ satisfying $\cc{T}>c$. Then there is a weakly Cauchy sequence $(x_k)$ in $B_X$ with $\ca{Tx_k}>c$. Since $(Tx_k)$ is 
weakly Cauchy as well, the above lemma yields a subsequence $(x_{k_n})$ with $\wca{Tx_{k_n}}\ge\frac c2$.
By the definition of $\beta$ we get $\beta(T)=\beta(T(B_X))\ge\frac c2$. Since $c<\cc{T}$ is arbitrary,
we get $\beta(T)\ge\frac12\cc{T}$ which yields \eqref{eq:ccbeta}.

We proceed to the proof of \eqref{eq:wccc}. If $\wcc_\omega(T)=0$, the inequality is obvious.
Suppose now that $\wcc_\omega(T)>c>0$. Then there is a weakly Cauchy sequence $(x_k)$ in $B_X$ with
$\omega(\{Tx_k:k\in\en\})>c$. Since, for each $n\in\en$, we have $\omega(\{Tx_k:k\ge n\})>c$ and the singleton $\{Tx_n\}$ is weakly compact,
$\diam\{Tx_k:k\ge n\}>c$.  Thus $\ca{Tx_k}\ge c$. Since $c<\wcc_\omega(T)$ is arbitrary, we get $\cc{T}\ge\wcc_\omega(T)$, and so the proof of \eqref{eq:wccc} is complete.

%%%%%%%%%%%%%%%%%%%%%%%%%%%%%%%%%%%%%%%%%%%%%%%%%%%%%%%%%%%%
\section{Weak compactness of operators and continuity}
\label{sec-wco}

The first of our main results are quantitative versions of \cite[Corollary~5]{Right} and \cite[Lemme~1]{gro}. This section is devoted to their proofs.

\begin{thm}\label{q-spojitost} Let $X$ and $Y$ be Banach spaces and $T:X\to Y$ be a bounded linear operator.
Then
\begin{eqnarray}
\label{eq:rho-T*}\tfrac12\cont[\rho]{T}\le\omega(T^*)\le \cont[\rho]{T},\\
\label{eq:rho*-T}\tfrac12\cont[\rho^*]{T^*}\le \omega(T)\le \cont[\rho^*]{T^*}.
\end{eqnarray}
\end{thm}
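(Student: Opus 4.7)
My plan is to prove the two sandwiches in parallel — they correspond via transposition — so I focus on $\tfrac12\cont[\rho]{T}\le\omega(T^*)\le\cont[\rho]{T}$ and indicate the symmetric argument at the end. Each sandwich splits into an easy lower bound and a harder upper bound.

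For the easy direction $\cont[\rho]{T}\le 2\omega(T^*)$, I would fix $c>\omega(T^*)$, pick an absolutely convex weakly compact $K\subset X^*$ with $T^*(B_{Y^*})\subset K+cB_{X^*}$ (so $q_K(x_\nu-x_{\nu'})\to 0$ along any $\rho$-Cauchy net in $B_X$ after rescaling by the bound of $K$), decompose $T^*y^*=k+r$ with $k\in K$, $\|r\|\le c$, and use the duality pairing to obtain $\|Tx_\nu-Tx_{\nu'}\|\le q_K(x_\nu-x_{\nu'})+2c$, hence $\ca{Tx_\nu}\le 2c$; the claim follows after taking the supremum over nets and letting $c\downarrow\omega(T^*)$.

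For the hard upper bound $\omega(T^*)\le\cont[\rho]{T}$, I set $c:=\cont[\rho]{T}$, fix $\varepsilon>0$, and proceed in two steps. \emph{First}, I would upgrade the Cauchy-net hypothesis to a uniform local modulus. For any $\rho$-null net $(x_\nu)\subset B_X$, $(Tx_\nu)$ is weakly null (as $\rho$ refines the weak topology) and weak lower semicontinuity of the norm applied to $Tx_{\nu_0}-Tx_\nu\to Tx_{\nu_0}$ weakly (for fixed $\nu_0$, $\nu\to\infty$), combined with $\ca{Tx_\nu}\le c$, gives $\|Tx_{\nu_0}\|\le\liminf_\nu\|Tx_{\nu_0}-Tx_\nu\|\le c+\varepsilon$ eventually in $\nu_0$. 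A standard net-to-neighbourhood contradiction (indexing violators $x_U\in U\cap B_X$ with $\|Tx_U\|>c+\varepsilon$ by $\rho$-neighbourhoods $U$ of $0$, ordered by reverse inclusion) then produces an absolutely convex weakly compact $K\subset B_{X^*}$ and $\delta>0$ with $x\in B_X,q_K(x)\le\delta\Longrightarrow\|Tx\|\le c+\varepsilon$; linear scaling (splitting the cases $q_K(x)\lessgtr\delta$) upgrades this to $\|Tx\|\le Mq_K(x)+(c+\varepsilon)$ on $B_X$, with $M:=\|T\|/\delta$. \emph{Second}, I would feed this into the Hahn-Banach distance formula
\[
\dd(T^*y^*,MK)=\sup_{z\in B_{X^{**}}}\bigl[\,|\langle z,T^*y^*\rangle|-Mq_K(z)\,\bigr]_+,
\]
valid because $MK\subset X^*$ is absolutely convex weakly compact. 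For $z=x\in B_X$ the bracketed expression is $\le c+\varepsilon$ since $|\langle x,T^*y^*\rangle|\le\|Tx\|$. To propagate to arbitrary $z\in B_{X^{**}}$, I would use that $B_X$ is dense in $B_{X^{**}}$ in the Mackey topology $\mu(X^{**},X^*)$ — a consequence of Goldstine together with Mackey-Arens (weak$^*$ and Mackey are both compatible with the pair $(X^{**},X^*)$ and so have the same closed convex hulls) — and note that along a Mackey-convergent net $x_\alpha\to z$ with $x_\alpha\in B_X$, both $\langle x_\alpha,T^*y^*\rangle\to\langle z,T^*y^*\rangle$ and $q_K(x_\alpha)\to q_K(z)$, the latter because $q_K$ is literally a defining seminorm of $\mu(X^{**},X^*)$. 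This passes the inequality to all of $B_{X^{**}}$, giving $\omega(T^*)\le c+\varepsilon$; letting $\varepsilon\downarrow 0$ concludes.

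The second sandwich $\tfrac12\cont[\rho^*]{T^*}\le\omega(T)\le\cont[\rho^*]{T^*}$ is handled by the same template with $T$ and $T^*$ (correspondingly $\rho$ and $\rho^*$) interchanged. The upper bound is even slightly simpler: the analogous modulus $\|T^*y^*\|\le Mq_K(y^*)+(c+\varepsilon)$ with $K\subset B_Y$ absolutely convex weakly compact feeds \emph{directly} into the Hahn-Banach distance formula for $Tx\in Y$, whose supremum is over $y^*\in B_{Y^*}$ — exactly where the modulus already lives, so no bidual extension is needed. The main obstacle in the argument is the Mackey-density step in the first upper bound, and the auxiliary crucial point is the one-sided weak lower semicontinuity trick which secures the sharp constant $1$ in both upper bounds — a naive Cauchy-gap-to-diameter argument would only yield a factor $2$.
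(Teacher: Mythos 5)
Your proof is correct and shares the overall skeleton of the paper's argument (easy halves by decomposing $T^*y^*$ resp.\ $Tx$ through a weakly compact set; hard halves by first producing a $\rho$- resp.\ $\rho^*$-neighbourhood of $0$ on which $T$ resp.\ $T^*$ is small and then converting it, via the Krein theorem, into a single absolutely convex weakly compact set $K$), but you implement the two delicate sub-steps differently. For the neighbourhood claim, the paper interleaves the violator net with zeros (indexing by $\U\times\{0,1\}$) so that the Cauchy gap directly controls $\|Tx_U\|=\|Tx_{U,0}-Tx_{U,1}\|$; you instead compare $Tx_{\nu_0}-Tx_\nu$ with its weak limit $Tx_{\nu_0}$ and invoke weak lower semicontinuity of the norm --- note that in the $\rho^*$ case your ``same template'' silently requires weak$^*$ lower semicontinuity of the dual norm, since $T^*y^*_\nu\to 0$ only weak$^*$ in $X^*$; this is a harmless adjustment. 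Both devices secure the sharp constant $1$. For the final step, the paper proves $T^*(B_{Y^*})\subset nK+cB_{X^*}$ for some $n$ by contradiction, separating a putative outlier from the weak$^*$-compact convex set $nK+cB_{X^*}$ by a norm-one element of $X$; you instead linearize the local bound into $\|Tx\|\le Mq_K(x)+(c+\varepsilon)$ on $B_X$ and feed it into the Hahn--Banach duality formula for $\dd(T^*y^*,MK)$, propagating the estimate from $B_X$ to $B_{X^{**}}$ by Mackey-density of $B_X$ in $B_{X^{**}}$ (Goldstine plus Mackey--Arens, using that $q_K$ is a defining seminorm of the Mackey topology). Your route yields a direct, non-contradiction final step with an explicit weakly compact witness $MK$, at the price of the Mackey-density lemma; the paper's route stays entirely inside elementary separation arguments. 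A last minor difference: the paper obtains $\tfrac12\cont[\rho]{T}\le\omega(T^*)$ by passing through $\cont[\rho]{T}\le\cont[\rho^*]{T^{**}}$, whereas you argue directly on $X$; your decomposition is sound because the weakly compact set provided by $\omega(T^*)$ is bounded and hence rescales into $B_{X^*}$, where $\rho$-Cauchyness of the net applies.
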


The first assertion \eqref{eq:rho-T*} is  the promised quantitative version of \cite[Corollary 5]{Right}.
We stress that the $\rho$-to-norm continuity of $T$ is quantitatively equivalent to the weak compactness
of $T^*$, not to that of $T$. (Recall that $\omega(T^*)$ is not equivalent to $\omega(T)$.) 
The second chain of inequalities \eqref{eq:rho*-T} is a quantitative variant of A.~Grothendieck's result stating that an operator $T$ is weakly compact if and only if $T^*$ is $\rho^*$-to-norm continuous (see \cite[Lemme~1]{gro}).

\begin{proof}[Proof of Theorem~\ref{q-spojitost}]
Let $X$ and $Y$ be Banach spaces and $T:X\to Y$ be a bounded linear operator. We start the proof with the inequality
\begin{equation}
\label{q-s1}
\frac12\cont[\rho^*]{T^*}\le\omega(T).
\end{equation}
Let $c>\omega(T)$ and $(y_\nu^*)_{\nu \in \Lambda}$ be an arbitrary $\rho^*$-Cauchy net in $B_{Y^*}$. 
We will show that $\ca{T^*y_\nu^*}\le 2c$.

By the definition, there exists a nonempty weakly compact set $L\subset Y$ such that 
\[
T(B_X)\subset L+cB_{Y}.
\]
Since $L$ is weakly compact, the net $(y_\nu^*)$ is uniformly Cauchy on $L$ (note that $L$ is bounded,
hence a positive multiple of $L$ is contained in $B_Y$).

Let $\ep>0$ be arbitrary. There exists $\nu_0\in \Lambda$ such that
\begin{equation}
\label{q-s2}
\sup_{y\in L} |y_{\nu}^*(y) -y_{\nu'}^*(y)|<\ep,\quad \nu,\nu'\geq \nu_0.
\end{equation}
Given $x\in B_{X}$, let $y\in L$ satisfy $\|Tx-y\|\le c$. By \eqref{q-s2},
\[
|(y_\nu^*-y_{\nu'}^*)(y)|<\ep,\quad \nu,\nu'\geq \nu_0.
\]
Thus  we have for $\nu,\nu'\geq \nu_0$
\[
\aligned
|(T^*y_{\nu}^*-T^*y_{\nu'}^*)(x)|&=|(y_{\nu}^*-y_{\nu'}^*)(Tx)|\\
&\le |(y_{\nu}^*-y_{\nu'}^*)(Tx-y)|+|(y_{\nu}^*-y_{\nu'}^*)(y)|\\
&\le 2c+\ep.
\endaligned
\]
Thus we get for $\nu,\nu'\geq \nu_0$
\[
\|T^*y_{\nu}^*-T^*y_{\nu'}^*\|=\sup_{x\in B_X} |(T^*y_{\nu}^*-T^*y_{\nu'}^*)(x)|\le 2c+\ep.
\]
It follows that $\ca{T^*y_\nu^*}\le 2c+\ep$. Since $\ep>0$ is arbitrary, we get $\ca{T^*y_\nu^*}\le 2c$. Hence $\cont[\rho*]{T^*}\le 2c$, which yields \eqref{q-s1}.

Next we observe that
\begin{equation}
\label{eq:T-T*}
\cont[\rho]{T}\le \cont[\rho^*]{T^{**}},
\end{equation}
since any $\rho$-Cauchy net $(x_\nu)$ in $B_X$ is $\rho^*$-Cauchy in $B_{X^{**}}$.
Using \eqref{eq:T-T*} and \eqref{q-s1} for $T^*$ we get
\[
\frac12\cont[\rho]{T}\le \frac12\cont[\rho^*]{T^{**}}\le \omega(T^*),
\]
which proves the first half of \eqref{eq:rho-T*}.

It remains to verify the second inequalities in \eqref{eq:rho-T*} and \eqref{eq:rho*-T}.
In order to prove
\begin{equation}
\label{q-s3}
\omega(T^*)\le \cont[\rho]{T},
\end{equation}
let us fix an arbitrary $c>\cont[\rho]{T}$.  We claim that:

\emph{There exists a $\rho$-neighborhood $U$ of $0$ such that $\|Tx\|\le c$ for every $x\in U\cap B_X$.}

Assuming the contrary, we can find for  every $\rho$-neighborhood $U$ of $0$ an element $x_U\in U\cap B_X$ such that $\|Tx_{U}\|> c$.
Let $\U$ denote the family of all  $\rho$-neighborhoods of $0$. We consider $\U$ endowed with the partial order given by inverse inclusion, and thus $(x_U)_{U\in \U}$ is a net converging to $0$ in the topology $\rho$. We further consider a directed set $\U\times\{0,1\}$ with the lexicographical ordering and set
\[
x_{U,i}=\begin{cases}
             x_{U},& i=0,\\
             0,& i=1,
        \end{cases}
        \ U\in\U.
\]
Then $(x_{U,i})$ is again a net in $B_X$ converging to $0$ in the topology $\rho$, and thus $\ca{Tx_{U,i}}\le\cont[\rho]{T}<c$. On the other hand,
\[
\|Tx_{U,1}-Tx_{U,0}\|=\|Tx_U\|>c
\]
for any $U\in\U$, which is a contradiction completing the proof of the claim.

\smallskip

Let $U$ be the $\rho$-neighborhood of $0$ from the claim.
By the definition of $\rho$, there exist $d>0$ and weakly compact sets $K_1,\dots, K_n$ in $B_{X^*}$ such that
\[
U\supset\{x\in X\colon \sup_{x^*\in K_i} |x^*(x)|<d,\ i=1,\dots,n\}.
\]
By the Krein theorem, the closed absolutely convex hull $K$ of $K_1\cup\cdots\cup K_n$ is a weakly compact subset of $B_X$, and thus we may assume that
\[
U=\{x\in X\colon \sup_{x^*\in K} |x^*(x)|<d\}.
\]

To find a weakly compact set needed by the definition of $\omega(T^*)$, we use the following assertion:

\emph{There exists $n\in\en$ such that $T^*(B_{Y^*})\subset nK+cB_{X^*}$.}

To verify this, assume that this is not the case. Then for every $n\in\en$ there exists $y_n^*\in B_{Y^*}$ with
\[
T^*y_n^*\notin nK+cB_{X^*}.
\]
The  set $nK$ is weakly compact, hence also weak* compact. It follows that $nK+cB_{X^*}$ is a weak* compact absolutely convex set, and thus we may separate the point $T^*y_n^*$ from it by an element $x_n\in X$ of norm one such that
\[
\sup_{x^*\in nK, z^*\in B_{X^*}} \Re (x^*(x_n)+cz^*(x_n))<\Re(T^*y_n^*)(x_n).
\]
Since $K$ is absolutely convex, we get
$$\sup_{x^*\in nK} |x^*(x_n)|+c<\Re(T^*y_n^*)(x_n).$$

Let $n\in\en$ be such that $\frac{\|T^*\|}{n}<d$. Then 
\[
|(nx^*)(x_n)|< |nx^*(x_n)|+c<\Re(T^*y_n^*)(x_n)\leq \|T^*\|,\quad x^*\in K,
\]
and thus
\[
|x^*(x_n)|<d,\quad x^*\in K.
\]
Hence $x_n\in U\cap B_X$, which implies $\|Tx_n\|\le c$ by the choice of $U$. Thus
\[
\aligned
c&\ge\|Tx_n\|\ge\Re y_n^*(Tx_n)=\Re(T^*y_n^*)(x_n)\\
&> \sup_{x^*\in nK} |x^*(x_n)|+c\\
&\ge c.
\endaligned
\]

The contradiction proves the assertion, and so we have $\omega(T^*)\leq c$. This finishes the proof of \eqref{q-s3}, and thus also of \eqref{eq:rho-T*}.

Since the proof of the remaining inequality
\begin{equation}
\label{eq:last-rho}
\omega(T)\le \cont[\rho^*]{T^*}
\end{equation}
is rather analogous to the one of \eqref{q-s3}, we merely outline it. Given $c>\cont[\rho^*]{T^*}$, we find a $\rho^*$-neighbourhood $V$ of $0$ in $Y^*$ such that $\|T^*y^*\|\le c$ for every $y^*\in V\cap B_{Y^*}$. Then we may assume without loss of generality that there are an absolutely convex weakly compact set $L\subset B_Y$ and $d>0$ such that $V=\{y^*\in Y^*: \sup_{y\in L} |y^*(y)|<d\}$. Finally, we find $n\in\en$ such that $T(B_X)\subset nL+cB_Y$. (We proceed as above, assuming the contrary, for each $n\in\en$ there exist $x_n\in B_X$ and $y_n^*\in B_{Y^*}$ satisfying
\[
\sup_{y\in nL} |y_n^*(y)|+c<\Re y_n^*(Tx_n).
\]
For $n\in\en$ with $\frac{\|T\|}{n}<d$, we then get $y_n^*\in V$, and thus
\[
c\ge \|T^*y_n^*\|\ge\Re y_n^*(Tx_n)>c,
\]
which is a contradiction.)
\end{proof}

%%%%%%%%%%%%%%%%%%%%%%%%%%%%%%%%%%%%%%%%%%%%%%
\section{Two ways of quantifying the Dunford-Pettis property}
\label{sec-twoways}

We recall that a Banach space $X$ is said to have the \emph{Dunford-Pettis property} if for any Banach space $Y$ every weakly compact operator $T:X\to Y$ is completely continuous. The following theorem summarizes the well-known equivalent formulations of this property.

\begin{thm}
\label{t:znama}
For a Banach space $X$, the following assertions are equivalent:
\begin{itemize}
\item [(i)] $X$ has the Dunford-Pettis property,
\item [(ii)] every weakly compact operator $T:X\to c_0$ is completely continuous,
\item [(iii)] given a weakly null sequence $(x_n)$ in $X$ and a weakly null sequence $(x_n^*)$ in $X^*$, we have $\lim_n x_n^*(x_n)=0$,
\item [(iv)] weakly convergent sequences in $X$ coincide with $\rho$-convergent ones,
\item [(v)] every weakly convergent sequence in $X^*$ is $\rho^*$-convergent,
\item [(vi)] if $T: Y\to X$ is weakly compact, with $Y$ an arbitrary Banach space, then $T^*$ is completely continuous,
\item [(vii)] if $T: \ell_1\to X$ is weakly compact, then $T^*$ is completely continuous.
\end{itemize}
\end{thm}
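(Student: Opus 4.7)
My plan is to establish the equivalences through several overlapping cycles centered on (iii), which is the most symmetric and concrete of the conditions. The trivial implications (i) $\Rightarrow$ (ii) and (vi) $\Rightarrow$ (vii) require no work. I will close the core cycle via (ii) $\Rightarrow$ (iii) $\Rightarrow$ (i), incorporate the Mackey-topology characterizations via (iii) $\Leftrightarrow$ (iv) and (iii) $\Leftrightarrow$ (v), and handle the dual-operator characterizations via (iii) $\Rightarrow$ (vi) and (vii) $\Rightarrow$ (iii).

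For (ii) $\Rightarrow$ (iii), given weakly null sequences $(x_n) \subset X$ and $(x_n^*) \subset X^*$, the natural witness is the operator $T\colon X \to c_0$ defined by $Tx = (x_n^*(x))_n$. Weak convergence of $(x_n^*)$ to zero in $X^*$ ensures that $Tx\in c_0$ and, more importantly, that $T^{**}x^{**} = (x^{**}(x_n^*))_n$ again lies in $c_0$ for every $x^{**} \in X^{**}$; this is equivalent to $T$ being weakly compact. Hypothesis (ii) then yields $\|Tx_n\|_\infty \to 0$, and in particular $x_n^*(x_n) \to 0$. For (iii) $\Rightarrow$ (i), supposing $T\colon X \to Y$ is weakly compact but not completely continuous, I choose a weakly null $(x_n) \subset B_X$ with $\|Tx_n\| \geq c$ and norming functionals $y_n^* \in B_{Y^*}$. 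By Gantmacher's theorem $T^*$ is weakly compact, so $(T^*y_n^*)$ has a weakly convergent subsequence with limit $x^* \in X^*$; applying (iii) to the weakly null pair $(T^*y_{n_k}^* - x^*)$ and $(x_{n_k})$, together with $x^*(x_{n_k}) \to 0$, contradicts $|y_{n_k}^*(Tx_{n_k})| \geq c$.

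The Mackey-type conditions (iv) and (v) are handled by a uniform subsequence-extraction scheme. Assuming (iii), if $(x_n)$ is weakly null in $X$ but not $\rho$-null, then some weakly compact $F \subset B_{X^*}$ and functionals $x_{n_k}^* \in F$ with $|x_{n_k}^*(x_{n_k})| > c$ exist; the Eberlein--\v{S}mulyan theorem extracts a weakly convergent subsequence $x_{n_{k_j}}^* \to x^*$, and (iii) applied to $(x_{n_{k_j}}^* - x^*)$ and $(x_{n_{k_j}})$ yields a contradiction. The reverse (iv) $\Rightarrow$ (iii) is immediate: a suitable rescaling of a weakly null $(x_n^*)$ together with $0$ forms a weakly compact subset of $B_{X^*}$, and uniform convergence on this set delivers $x_n^*(x_n) \to 0$. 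The argument for (v) is completely symmetric, with the roles of $X$ and $X^*$ interchanged and the Eberlein--\v{S}mulyan step replaced by the observation that $\{x_n\} \cup \{0\}$ is weakly compact in $X$ whenever $x_n \to 0$ weakly.

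Finally, for (i) $\Rightarrow$ (vi), given a weakly compact $T\colon Y \to X$ and a weakly null $(x_n^*) \subset X^*$, almost-norming vectors $y_n \in B_Y$ for $T^*x_n^*$ together with weak compactness of $T$ produce a weakly convergent subsequence $Ty_{n_k} \to x_0$; condition (iii) then forces $\|T^*x_{n_k}^*\| \to 0$. For (vii) $\Rightarrow$ (iii), I define $T\colon \ell_1 \to X$ by $Te_n = x_n$; its image lies in the closed absolutely convex hull of the relatively weakly compact set $\{x_n\} \cup \{0\}$, which is itself weakly compact by the Krein theorem, so $T$ is weakly compact. Hypothesis (vii) then gives $\|T^*x_n^*\|_{\ell_\infty} = \sup_k |x_n^*(x_k)| \to 0$, and in particular $x_n^*(x_n) \to 0$. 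No single step is deep; the main delicacy lies in the repeated interplay of Gantmacher, Krein, and passage to weakly convergent subsequences, which must be orchestrated carefully so that the weak compactness of the auxiliary operator is genuinely used at each stage.
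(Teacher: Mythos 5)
Your proof is correct, and where the paper actually gives details it takes the same route: the paper's proof of Theorem~\ref{t:znama} mostly defers to the references (Diestel's survey for (i)--(iii),(vi) and Grothendieck for (iv),(v)), and the only steps it writes out --- (vii)~$\Rightarrow$~(iii) via the operator $T(a_n)=\sum a_n x_n$ on $\ell_1$, and the role of $\rho^*$-to-norm continuity of adjoints of weakly compact operators --- coincide with yours. Your remaining arguments (the $c_0$-valued operator for (ii)~$\Rightarrow$~(iii), Gantmacher plus Eberlein--\v{S}mulyan subsequence extraction for (iii)~$\Rightarrow$~(i), (iv), (v), (vi)) are the standard ones the paper is citing, and they check out; the only cosmetic omission is the routine ``every subsequence has a further subsequence'' step needed to upgrade the subsequential conclusions in (iii)~$\Rightarrow$~(vi) to convergence of the full sequence.
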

\begin{proof}
The proofs of many of the equivalences involved in Theorem~\ref{t:znama} are almost identical and use the same techniques for which we refer to \cite{diestel} or \cite{kacena}. The equivalence of (i), (ii), (iii) and (vi) is mentioned in \cite[Theorem~1]{diestel}, the equivalence of (i), (iv) and (v) has been basically proved in the context of locally convex spaces by A.~Grothendieck in \cite[Proposition~1~bis]{gro}. Note that for the implication (v) $\Rightarrow$ (vi) one needs only the aforementioned result that for any weakly compact operator~$T$, the adjoint $T^*$ is $\rho^*$-to-norm continuous. The implication (vi) $\Rightarrow$ (vii) is trivial and for (vii) $\Rightarrow$ (iii) it is enough to consider the operator $T:\ell_1\to X$ with $T(a_n)=\sum a_n x_n,\,(a_n)\in\ell_1,$ where $(x_n)$ is a given weakly null sequence in~$X$.
\end{proof}

Using the results of Section~\ref{sec-wco} we obtain that the Dunford-Pettis property is always quantitative in some sense.

\begin{thm}\label{q-DP-univ} For a Banach space $X$, the following assertions are equivalent:
\begin{itemize}
\item [(i)] $X$ has the Dunford-Pettis property,
\item [(ii)] $\cc{T}\le 2\omega(T^*)$ for every operator $T$ from $X$ to any Banach space $Y$,
\item [(iii)] $\cc{T^*}\le 2\omega(T)$ for every operator $T$ from any Banach space $Y$ to $X$,
\item [(iv)] $\limsup|x_n^\ast(x_n)|\le\omega({\{x_n^*:n\in\en\}})$ whenever $(x_n)$ is a weakly null sequence in~$B_{X}$ and $(x_n^*)$ is a bounded sequence in~$X^*$,
\item [(v)] $\limsup|x_n^\ast(x_n)|\le\omega({\{x_n:n\in\en\}})$ whenever $(x_n)$ is a bounded sequence in~$X$ and $(x_n^\ast)$ is a weakly null sequence in~$B_{X^\ast}$,
\item [(vi)] $\wca[\rho^*]{x_n^*}\le2\omega(\{x_n^*:n\in\en\})$ whenever $(x_n^*)$ is a bounded sequence in~$X^*$,
\item [(vii)] $\wca[\rho]{x_n}\le2\omega(\{x_n:n\in\en\})$ whenever $(x_n)$ is a bounded sequence in~$X$.
\end{itemize}
\end{thm}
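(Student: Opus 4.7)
The plan is to prove the equivalence in a cycle, with one key lemma doing most of the work: under the Dunford-Pettis property, every weakly Cauchy sequence in $B_X$ is $\rho$-Cauchy, and every weakly Cauchy sequence in $B_{X^*}$ is $\rho^*$-Cauchy. The reverse implications (each of (ii)--(vii) back to (i)) are short specializations. For (ii) and (iii) the Gantmacher theorem forces the relevant $\omega(T^*)$ or $\omega(T)$ to vanish once $T$ is weakly compact, and $\cc{\cdot}=0$ characterizes complete continuity. For (iv) and (v) the choice of both sequences weakly null gives $\omega=0$ (as $\{x_n^*\}\cup\{0\}$ is weakly compact), so one recovers characterization (iii) of Theorem~\ref{t:znama}. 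For (vi) and (vii) a weakly null input again has $\omega=0$, whence a subsequence is $\rho^*$-Cauchy (resp.\ $\rho$-Cauchy); sequential completeness of these topologies combined with the fact that $\rho^*$-convergence (resp.\ $\rho$-convergence) implies weak$^*$-convergence (resp.\ weak convergence) forces the subsequence to be $\rho^*$-null (resp.\ $\rho$-null), and the standard subsubsequence principle promotes this to the whole sequence, giving characterization (v) (resp.\ (iv)) of Theorem~\ref{t:znama}.

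The key lemma itself I would prove by contradiction. If $(x_k)\subset B_X$ is weakly Cauchy but not $\rho$-Cauchy, there exist a weakly compact $K\subset B_{X^*}$, an $\varepsilon>0$, indices $k_n,l_n\to\infty$, and functionals $x_n^*\in K$ with $|x_n^*(x_{k_n}-x_{l_n})|\ge\varepsilon$. Weak compactness of $K$ yields a subsequence along which $x_n^*$ converges weakly to some $x^*\in X^*$, so $y_n^*:=x_n^*-x^*$ is weakly null in $X^*$ while $u_n:=x_{k_n}-x_{l_n}$ is weakly null in $X$ (it is the difference of two subsequences of the weakly Cauchy $(x_k)$). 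Since $x^*(u_n)\to 0$ we still have $|y_n^*(u_n)|\ge\varepsilon/2$ for large $n$, contradicting characterization (iii) of Theorem~\ref{t:znama}. The $X^*$-side of the lemma is proved identically after exchanging the roles of $X$ and $X^*$.

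With the key lemma in hand the forward implications are short. For (i)$\Rightarrow$(ii) the lemma yields $\cc{T}=\cc[\rho]{T}$, and inequality~\eqref{eq:cc-cont} together with Theorem~\ref{q-spojitost} gives $\cc{T}\le\cont[\rho]{T}\le 2\omega(T^*)$; (i)$\Rightarrow$(iii) is the dual version, using $\cc{T^*}=\cc[\rho^*]{T^*}\le\cont[\rho^*]{T^*}\le 2\omega(T)$. For (i)$\Rightarrow$(iv), pick $c>\omega(\{x_n^*\})$ and write $x_n^*=\ell_n+cb_n$ with $\ell_n$ in a fixed weakly compact $L\subset X^*$ and $b_n\in B_{X^*}$; characterization (iv) of Theorem~\ref{t:znama} makes the weakly null $(x_n)$ a $\rho$-null sequence, so $\sup_{\ell\in L}|\ell(x_n)|\to 0$ and $\limsup|x_n^*(x_n)|\le c$. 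Implication (i)$\Rightarrow$(v) is symmetric. For (i)$\Rightarrow$(vi) the same decomposition works: weak compactness of $L$ gives a subsequence along which $\ell_{n_k}$ converges weakly in $X^*$, characterization (v) of Theorem~\ref{t:znama} upgrades this to $\rho^*$-convergence, and so on any weakly compact $K\subset B_X$ one has $q_K(x_{n_k}^*-x_{n_l}^*)\le q_K(\ell_{n_k}-\ell_{n_l})+2c$, giving $\ca[\rho^*]{x_{n_k}^*}\le 2c$ and $\wca[\rho^*]{x_n^*}\le 2\omega(\{x_n^*\})$; (vii) is symmetric. The only real obstacle is the key lemma; once it is set, everything is careful bookkeeping combining Theorem~\ref{q-spojitost}, inequality~\eqref{eq:cc-cont}, and the classical characterizations of DP collected in Theorem~\ref{t:znama}.
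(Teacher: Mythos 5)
Your proposal is correct and follows essentially the same route as the paper: the reverse implications via the degenerate cases $\omega=0$ (with the diagonal/subsubsequence argument and Mackey completeness for (vi),(vii)), the implications (i)$\Rightarrow$(ii),(iii) via Theorem~\ref{q-spojitost} and the coincidence of weakly Cauchy and $\rho$-Cauchy (resp.\ $\rho^*$-Cauchy) sequences, and the implications to (iv)--(vii) via the decomposition $x_n^*=\ell_n+cb_n$ with $\ell_n$ in a weakly compact approximant. The only cosmetic differences are that you isolate the ``weakly Cauchy $\Rightarrow$ $\rho$-Cauchy'' fact as a lemma proved by contradiction from Theorem~\ref{t:znama}(iii) (the paper derives it from parts (iv),(v) of that theorem), and that in (i)$\Rightarrow$(iii) the equality $\cc{T^*}=\cc[\rho^*]{T^*}$ should be the inequality $\cc{T^*}\le\cc[\rho^*]{T^*}$, which is all that is needed.
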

\begin{proof}
Obviously, by Theorem~\ref{t:znama}, any of the assertions (ii)--(v) implies assertion (i). For (vi) and (vii) this follows from the completeness of the Mackey topologies $\rho^*=\tau(X^*,X)$ and $\tau(X^{**},X^*)$, respectively (see \cite[Proposition~1.1]{sch-whe}). Indeed, suppose for example (vii). Let $(x_n)$ be a weakly null sequence. Then any subsequence $(x_{k_n})$ is also weakly null and hence (by (vii)) $\wca[\rho]{x_{k_n}}=0$. It follows that any subsequence of $(x_n)$ has a further subsequence which is $\rho$-Cauchy. Indeed, let $(u_n)$ be any subsequence of $(x_n)$. Set $u_n^0=u_n$ and construct by induction $(u_n^k)$ a subsequence of $(u_n^{k-1})$ with $\ca[\rho]{u_n^k}<\frac1k$. The diagonal sequence $(u_k^k)$ is then $\rho$-Cauchy. 
By the aforementioned completeness it follows that any weakly null $\rho$-Cauchy sequence is $\rho$-null. Thus any subsequence of $(x_n)$ has a further subsequence which is $\rho$-null. Therefore $(x_n)$ itself is $\rho$-null. We have 
proved that $X$ satisfies the condition (iv) of Theorem~\ref{t:znama}, hence $X$ has the Dunford-Pettis property.
The reasoning for (vi) $\Rightarrow$ (i) is similar. Thus it is sufficient to show that the Dunford-Pettis property implies all the other assertions.

(i) $\Rightarrow$ (ii)
Suppose $X$ has the Dunford-Pettis property, $Y$ is any Banach space and $T:X\to Y$ is a bounded linear operator. By Theorem~\ref{t:znama}(iv), weakly Cauchy and $\rho$-Cauchy sequences in $X$ coincide, thus in particular $\cc{T}=\cc[\rho]{T}$. Since obviously $\cc[\rho]{T}\le\cont[\rho]{T}$ (cf. \eqref{eq:cc-cont}), Theorem~\ref{q-spojitost} gives (ii).

(i) $\Rightarrow$ (iii)
Similarly, suppose $X$ has the Dunford-Pettis property, $Y$ is any Banach space and $T:Y\to X$ is a bounded linear operator. By Theorem~\ref{t:znama}(v), weakly Cauchy sequences in $X^*$ are $\rho^*$-Cauchy, thus in particular $\cc{T^*}\le\cc[\rho^*]{T^*}$. Since obviously $\cc[\rho^*]{T}\le\cont[\rho^*]{T^*}$, Theorem~\ref{q-spojitost} yields (iii).

(i) $\Rightarrow$ (v)
Let $(x_n)$ be a bounded sequence in~$X$ and $(x_n^\ast)$ be a weakly null sequence in~$B_{X^\ast}$. 
Let $c>\omega(\{x_n:n\in\en\})$ be arbitrary. Fix a weakly compact set $K\subset X$ such that $\dh(\{x_n:n\in\en\},K)<c$. For each $n\in\en$, let $y_n\in K$ be such that $\|y_n-x_n\|<c$.
Since $(x_n^\ast)$ is weakly null, it is also $\rho^*$-null (by Theorem~\ref{t:znama}), so in particular
$x_n^\ast\to 0$ uniformly on $K$. It follows that $x_n^*(y_n)\to 0$.
Hence
$$\limsup |x_n^*(x_n)| \le 
\limsup |x_n^*(x_n-y_n)| + \limsup |x_n^*(y_n)| \le \limsup\|x_n-y_n\|\le c.$$ 
This completes the proof.

(i) $\Rightarrow$ (iv)
This implication can be proved exactly as the previous one, we only need to interchange roles of $X$ and $X^*$.

(i) $\Rightarrow$ (vii)
Let $c>\omega(\{x_n:n\in\en\})$ be arbitrary. Fix a weakly compact set $K\subset X$ such that $\dh(\{x_n:n\in\en\},K)<c$. For each $n\in\en$, let $y_n\in K$ be such that $\|y_n-x_n\|<c$.
Since $K$ is weakly compact, there is a subsequence $(y_{n_k})$ weakly converging to some $y\in K$.
Then $(y_{n_k})$ is also $\rho$-convergent (by Theorem~\ref{t:znama}). To complete the proof it is enough to show that $\ca[\rho]{x_{n_k}}\le 2c$.
Fix any weakly compact $L\subset B_{X^*}$. Then for any $k,l\in\en$ we have
$$q_L(x_{n_k}-x_{n_l})\le q_L(x_{n_k}-y_{n_k})+q_L(y_{n_k}-y_{n_l})+q_L(y_{n_l}-x_{n_l})< 2c+q_L(y_{n_k}-y_{n_l}).$$
It follows that $$\ca[\rho]{x_{n_k}}\le 2c+\ca[\rho]{y_{n_k}}=2c$$ and the proof is completed.
 
(i) $\Rightarrow$ (vi)
This implication can be proved analogously to the previous one by interchanging roles of $X$ and $X^*$.
\end{proof}

\begin{remark}
Quantities $\wca[\rho^*]{\cdot}$ and $\wca[\rho]{\cdot}$ in the assertions (vi) and (vii), respectively, of Theorem~\ref{q-DP-univ} cannot be replaced by $\ca[\rho^*]{\cdot}$ and $\ca[\rho]{\cdot}$. Indeed, let $X$ be an arbitrary Banach space possessing the Dunford-Pettis property. Consider a sequence  $(x_n)$ of the form $x,0,x,0,\ldots$ in $X^*$ (in $X$) with $x\neq0$. Then $\omega(\{x_n:n\in\en\})=0$, but $\ca[\rho^*]{x_n}=\|x\|$ ($\ca[\rho]{x_n}=\|x\|$, respectively).
\end{remark}

It is natural to ask whether a variant of Theorem~\ref{q-DP-univ} can be proved with quantities $\omega(\cdot)$ replaced by the respective quantities $\wk[]{\cdot}$. Interestingly enough, the study of this question brings us to deeper understanding of the Dunford-Pettis property. It turns out that the analogues of conditions (ii), (iv) and (vi) with $\omega(\cdot)$ replaced by $\wk[]{\cdot}$ are all equivalent to each other and so are the analogues of conditions (iii), (v) and (vii). Both groups of these quantitative assertions obviously strengthen the Dunford-Pettis property, however, as Example~\ref{exa-podrobny} will show, they are incomparable in general. This reveals the dual nature of the Dunford-Pettis property which is not apparent in the classical non-quantitative case.

\begin{thm}\label{q-DP-direct}
Let $X$ be a Banach space. The following assertions are equivalent:
\begin{enumerate}
\item[(i)]
There is $C>0$ such that $\cc{T}\le C\wk[X^*]{T^*}$ for any operator $T$ from $X$ to a Banach space $Y$.
\item[(ii)]
There is $C>0$ such that $\cc{T}\le C\wk[X^*]{T^*}$ for any operator $T$ from $X$ to $\ell_\infty$.
\item[(iii)]
There is $C>0$ such that $\limsup|x_n^\ast(x_n)|\le C\wk[X^*]{\{x_n^*:n\in\en\}}$ whenever $(x_n)$ is a weakly null sequence in~$B_X$ and $(x_n^*)$ is a bounded sequence in~$X^*$.
\item[(iv)]
There is $C>0$ such that $\ca[\rho^*]{x_n^*}\le C\de{x_n^*}$ for any bounded sequence $(x_n^*)$ in~$X^*$.
\item[(v)]
There is $C>0$ such that $\wca[\rho^*]{x_n^*}\le C\wk[X^*]{\{x_n^*:n\in\en\}}$ for any bounded sequence $(x_n^*)$ in~$X^*$.
\item[(vi)]
There is $C>0$ such that $\cc{T}\le C\wk[Y]{T}$ for any operator $T$ from $X$ to a Banach space $Y$.
\item[(vii)]
There is $C>0$ such that $\cc{T}\le C\wk[\ell_\infty]{T}$ for any operator $T$ from $X$ to $\ell_\infty$.
\end{enumerate}
\end{thm}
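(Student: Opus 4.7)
The plan is to close a cycle of implications among (i)--(vii), leveraging the quantitative Gantmacher inequality \eqref{eq:Gant-dh} for the easy equivalences and a central operator-to-sequence correspondence for the main nontrivial step. First, (i)$\Rightarrow$(ii) and (vi)$\Rightarrow$(vii) are trivial since $\ell_\infty$ is a particular Banach space, and \eqref{eq:Gant-dh} yields (i)$\Leftrightarrow$(vi) and (ii)$\Leftrightarrow$(vii), each with the constant $C$ changing by at most a factor of $2$.

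The key step is (vii)$\Leftrightarrow$(iii). For any bounded $(x_n^*)\subset B_{X^*}$, associate the contraction $T_{(x_n^*)}\colon X\to\ell_\infty$ defined by $T_{(x_n^*)}(x)=(x_n^*(x))_n$. The plan rests on the identity
\[
\wk[\ell_\infty]{T_{(x_n^*)}(B_X)}=\wk[X^*]{\{x_n^*:n\in\en\}},
\]
which one verifies by computing $\overline{T_{(x_n^*)}(B_X)}^{w^*}=T_{(x_n^*)}^{**}(B_{X^{**}})$ in $\ell_\infty^{**}$, using the canonical norm-one projection $\pi\colon\ell_\infty^{**}\to\ell_\infty$ (so that $\dist(\xi,\ell_\infty)=\|\xi-\pi(\xi)\|$) and the Yosida--Hewitt decomposition $\ell_\infty^*=\ell_1\oplus\ell_1^\perp$: the weak*-cluster points of $(x_n^*)$ in $X^{***}$ are precisely the functionals $\phi_U(x^{**})=\lim_{n\to U}x^{**}(x_n^*)$ indexed by free ultrafilters $U$ on $\en$, and the contribution of a singular point mass $\delta_t\in B_{\ell_\infty^*}$ to $\dist(T_{(x_n^*)}^{**}x^{**},\ell_\infty)$ is exactly $(\iota(y_t^*)-\phi_{U_t})(x^{**})$ where $y_t^*=T_{(x_n^*)}^*\delta_t\in X^*$. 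Granting the identity, (vii)$\Rightarrow$(iii) is immediate: for weakly null $(x_n)\in B_X$ and $T=T_{(x_n^*)}$ we have $|x_n^*(x_n)|\le\|Tx_n\|_\infty$ and $(Tx_n)$ is weakly null in $\ell_\infty$, so $\limsup\|Tx_n\|_\infty\le\ca{Tx_n}\le\cc{T}\le C\wk[\ell_\infty]{T(B_X)}=C\wk[X^*]{\{x_n^*\}}$. For the converse (iii)$\Rightarrow$(vii), given $T\colon X\to\ell_\infty$ and a weakly Cauchy $(y_n)\in B_X$, select $n_j,m_j\to\infty$ with $\|Ty_{n_j}-Ty_{m_j}\|\to\ca{Ty_n}$, set $z_j:=(y_{n_j}-y_{m_j})/2\in B_X$ (weakly null), pick $k_j$ with $|e_{k_j}(Tz_j)|\ge\|Tz_j\|_\infty/2$, and apply (iii) to $(T^*e_{k_j})_j$ paired with $(z_j)$; the resulting bound, combined with the identity above, gives $\ca{Ty_n}\le 4C\wk[\ell_\infty]{T(B_X)}$.

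For (iii)$\Leftrightarrow$(v) both directions are subsequence arguments. If (v) fails with constant $C$, one extracts a weakly compact $K\subset B_X$ and indices $n_i<m_i\to\infty$, $y_i\in K$, witnessing $|x_{n_i}^*(y_i)-x_{m_i}^*(y_i)|>c>C\wk[X^*]{\{x_n^*\}}$; Eberlein--\v{S}mulyan extraction $y_i\to y$ weakly together with a scalar-diagonal extraction making $(x_n^*(y))$ convergent reduces the problem to the weakly null $z_i:=y_i-y$, on which (iii) produces the contradiction $c\le 8C\wk[X^*]{\{x_n^*\}}$. Conversely, I apply (v) to the functional sequence appearing in (iii) to extract a $\rho^*$-almost-Cauchy subsequence, and use uniform Cauchyness on the weakly compact set $\{x_n:n\in\en\}\cup\{0\}\subset B_X$ combined with a scalar-diagonal extraction to bound $\limsup|x_n^*(x_n)|$. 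Finally, for (iv)$\Leftrightarrow$(v), the implication (iv)$\Rightarrow$(v) reduces via the immediate bound $\wca[\rho^*]{x_n^*}\le C\wde{x_n^*}$ (obtained by applying (iv) to every subsequence) to the sequential comparison $\wde{x_n^*}\le 2\wk[X^*]{\{x_n^*\}}$; the reverse (v)$\Rightarrow$(iv) uses that (v) already implies the classical Dunford--Pettis property through (iii), so that a weakly Cauchy sequence in $X^*$ (i.e., $\de{x_n^*}=0$) is $\rho^*$-convergent and hence $\rho^*$-Cauchy, the general case being reduced to this one by Rosenthal extraction.

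The hard part, and the main obstacle of the proof, is the sequential inequality $\wde{x_n^*}\le C\wk[X^*]{\{x_n^*\}}$, needed for (iv)$\Rightarrow$(v) in the absence of sequential weak*-compactness on $X^{***}$. The natural approach is a Rosenthal $\ell_1$-theorem dichotomy: either some subsequence is weakly Cauchy (giving $\wde=0$), or any $\ell_1$-basic subsequence contributes a universal lower bound on $\wk[X^*]{\{x_n^*\}}$ through its non-weakly-compact structure, so that the desired comparison $\wde\le C\wk$ holds in both cases with a universal constant.
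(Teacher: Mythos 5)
Your overall architecture matches the paper's (a cycle through the sequence conditions, with the quantitative Gantmacher inequality \eqref{eq:Gant-dh} disposing of (i)$\Leftrightarrow$(vi) and (ii)$\Leftrightarrow$(vii), and an operator built from $(x_n^*)$ mediating between operator and sequence conditions), and your Yosida--Hewitt computation for $T_{(x_n^*)}$ is a workable substitute for the paper's appeal to the quantitative Krein theorem --- except that the claimed exact identity should be a two-sided estimate with constants: the Dixmier-type projection $\phi\mapsto\phi\circ\kappa_X$ only satisfies $\|\phi-\phi\circ\kappa_X\|\le 2\dist(\phi,X^*)$, so you get $\wk[\ell_\infty]{T_{(x_n^*)}(B_X)}\le 2\wk[X^*]{\{x_n^*:n\in\en\}}$ rather than equality, which is all you need. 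The genuine gap is exactly where you locate it: the inequality $\wde{x_n^*}\le C\,\wk[X^*]{\{x_n^*:n\in\en\}}$. Your Rosenthal-dichotomy argument for it fails: in the non-weakly-Cauchy branch, Rosenthal's theorem produces an $\ell_1$-basic subsequence with \emph{some} lower constant $\delta>0$, and this $\delta$ bears no universal relation to $\wde{x_n^*}$ (it can be arbitrarily small while $\wde{x_n^*}$ stays bounded below), so the resulting lower bound $\wk[X^*]{\{x_n^*:n\in\en\}}\gtrsim\delta$ yields no universal $C$. What is required is a quantitative extraction producing an $\ell_1$-subsequence with constant comparable to $\wde{x_n^*}$; this is precisely $\wde{x_k}\le 2\dh(\clu{X}{x_k},X)$, i.e.\ \cite[Theorem~1]{wesecom} applied in $X^*$, which the paper simply cites at this point. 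Without it your (iv)$\Rightarrow$(v) is unproved.

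Two further steps need repair, and both reduce to the same issue. In your direct (iii)$\Rightarrow$(v), after writing $y_i=z_i+y$ you must control $\limsup_i|(x_{n_i}^*-x_{m_i}^*)(y)|$; the proposed ``scalar-diagonal extraction making $(x_n^*(y))$ convergent'' is circular, since $y$ is produced only after the witnesses (hence after any extraction) are chosen, and re-extracting produces a new limit point each time. That term is bounded by $\de{x_n^*}$ (immediately from the definition, as $y\in B_X\subset B_{X^{**}}$) but \emph{not} by $\wk[X^*]{\{x_n^*:n\in\en\}}$: a sequence alternating between two fixed functionals has $\wk[X^*]{\cdot}=0$ but $\de{\cdot}>0$. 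The clean fix is the paper's ordering: prove (iii)$\Rightarrow$(iv), where $\de{x_n^*}$ \emph{is} the right-hand side and the term is harmless, and then (iv)$\Rightarrow$(v) via the cited inequality. Likewise your (v)$\Rightarrow$(iv) (``reduce to the weakly Cauchy case by Rosenthal extraction'') only controls subsequences, i.e.\ $\wca[\rho^*]{\cdot}$, whereas (iv) demands a bound on $\ca[\rho^*]{\cdot}$ of the full sequence against $\de{x_n^*}$; this implication should be dropped and the cycle closed as (iii)$\Rightarrow$(iv)$\Rightarrow$(v)$\Rightarrow$(i), noting that your (iii)$\Rightarrow$(vii) argument already gives (iii)$\Rightarrow$(vi) for arbitrary targets once the coordinate functionals $e_{k_j}$ are replaced by Hahn--Banach norming functionals.
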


\begin{proof}
The implication (i) $\Rightarrow$ (ii) holds trivially, even with the same constant.

(ii) $\Rightarrow$ (iii)
Let us assume that there is $C>0$ such that $\cc{T}\le C\wk[X^*]{T^*}$ for any operator $T$ from $X$ to $\ell_\infty$. Let $(x_n)$ be a a weakly null sequence in~$B_X$ and $(x_n^*)$ be a bounded sequence in~$X^*$. We will show that
\[\limsup|x_n^\ast(x_n)|\le 8C\wk[X^*]{\{x_n^*:n\in\en\}}.\]

Let us define operator $S:\ell_1\to X^*$ by $S(\lambda_n)=\sum_n\lambda_n x_n^*$. Since $Se_n=x_n^*$ for every $n\in\en$, where $e_n$ denotes the $n$-th basic vector in $\ell_1$, the set $S(B_{\ell_1})$ is contained in the closed absolutely convex hull of $\{x_n^*:n\in\en\}$, and so, by \cite[Theorem~2]{f-krein},
\begin{equation}
\label{eq:q-DP-dual1}
\wk[X^*]{S}\le 2\wk[X^*]{\{x_n^*:n\in\en\}}.
\end{equation}
In fact, the result of \cite{f-krein} is formulated for the closed convex hull, but the result on the closed absolutely convex hull is an easy consequence (both in the real and  the complex cases).

Let $T$ be the restriction of $S^*$ to the space $X$. Then $T$ is an operator from $X$ to $\ell_\infty$. Using the fact that $(x_1,0,x_2,0,\ldots)$ is a weakly Cauchy sequence in $B_X$, the assumption (ii) and estimates \eqref{eq:Gant-dh} and \eqref{eq:q-DP-dual1}, we can write
\[
\begin{aligned}
\limsup|x_n^\ast(x_n)| &= \limsup|e_n(Tx_n)| \le \limsup \|Tx_n\|\\
&\le \cc{T} \le C\wk[X^*]{T^*}\\
&\le 2C\wk[\ell_\infty]{T} \le 2C\wk[\ell_\infty]{S^*}\\
&\le 4C\wk[X^*]{S} \le 8C\wk[X^*]{\{x_n^*:n\in\en\}}.
\end{aligned}
\]

(iii) $\Rightarrow$ (iv) Let us assume that (iii) holds with a constant $C>0$. We will show that (iv) holds with the constant $2C+1$.  Let $(x_n^*)$ be a bounded sequence in~$X^*$. If $\ca[\rho^*]{x_n^*}=0$, the inequality is obvious. So, suppose $\ca[\rho^*]{x_n^*}>0$ and fix any
$t\in(0,\ca[\rho^*]{x_n^*})$. Then there is a sequence of natural numbers $l_n<m_n<l_{n+1},\,n\in\en,$ and a weakly compact set $K\subset B_X$ such that $q_K(x_{l_n}^*-x_{m_n}^*)>t$ for every $n\in\en$. Let $(x_n)$ be a sequence in~$K$ such that $|(x_{l_n}^*-x_{m_n}^*)(x_n)|>t$ for every $n\in\en$. By passing to a subsequence if necessary, we may assume that $(x_n)$ is weakly convergent to some $x\in K$. Then the sequence $(y_n)=(\frac{x_n-x}2)$ is a weakly null sequence in~$B_X$. 

Any weak$^*$ cluster point of the sequence $(x_{l_n}^*-x_{m_n}^*)$ in $X^{***}$ is the difference of two weak$^*$ cluster points of $(x_n^*)$ in $X^{***}$, in particular
\begin{equation}
		\label{eq:wkde} \wk[X^*]{x_{l_n}^*-x_{m_n}^*}\le\de{x_n^*}.
\end{equation} 
Using consecutively the fact that $x_n=2y_n+x$, the validity of (iii) with $C$ and \eqref{eq:wkde}, we get
$$
\begin{aligned}
t&\le\liminf |(x_{l_n}^*-x_{m_n}^*)(x_n)|\\
&\le 2\limsup |(x_{l_n}^*-x_{m_n}^*)(y_n)| + \limsup |(x_{l_n}^*-x_{m_n}^*)(x)|\\
&\le 2C\wk[X^*]{\{x_{l_n}^*-x_{m_n}^*: n\in\en\}}+\wk[X^*]{\{x_{l_n}^*-x_{m_n}^*:n\in\en\}}\\
&\le (2C+1)\de{x_n^*}
\end{aligned}$$
and the proof is completed.

(iv) $\Rightarrow$ (v) 
Let us assume that there is $C>0$ such that $\ca[\rho^*]{x_n^*}\le C\de{x_n^*}$ for any bounded sequence $(x_n^*)$ in~$X$. Since, by \cite[Theorem~1]{wesecom},
\[\wde{x_n^*}\le 2\dh (\clud{X}{x_n^*},X^*),\]
using the assumption we get
\begin{equation*}
\wca[\rho^*]{x_n^*}\le C\wde{x_n^*}\le 2C\dh (\clud{X}{x_n^*},X^*)= 2C\wk[X^*]{\{x_n^*:n\in\en\}}
\end{equation*}
for any bounded sequence $(x_n^*)$ in $X^*$.

(v) $\Rightarrow$ (i)  Suppose that (v) holds with a constant $C>0$. We will show that (i) holds with $2C$.  Let $T$ be an operator from $X$ to a Banach space $Y$. Fix arbitrary numbers $u<\cc{T}$ and $v>\wk[X^*]{T^*}$. It suffices to show that $u\le 2Cv$.

Since $\cc{T}>u$, there is a weakly Cauchy sequence $(x_n)$ in $B_X$ with $\ca{Tx_n}>u$. Let $l_n<m_n<l_{n+1},\,n\in\en,$ be a sequence of natural numbers and $(y_n^*)$ be a sequence in $B_{Y^*}$ such that $|y_n^*(Tx_{l_n}-Tx_{m_n})|>u$ for every $n\in\en$. 
Further, using the inequality $\wk[X^*]{T^*}<v$ we get $\wk[X^*]{\{T^*y_n^*:n\in\en\}}<v$. It follows then from the assumption (v) that $\wca[\rho^*]{T^* y_n^*}<Cv$. By passing to a subsequence, if necessary, we may assume that $\ca[\rho^*]{T^* y_n^*}<Cv$.
The set $K=\{\frac{x_{l_n}-x_{m_n}}2:n\in\en\}$ is relatively weakly compact in $B_X$ and hence there is $N\in\en$ such that $q_K(T^*y_i^*-T^*y_j^*)<Cv$ for every $i,j\ge N$.
It follows that for $j\ge N$ we have
$$\begin{aligned}u&<|y_j^*(Tx_{l_j}-Tx_{m_j})|=|T^*y_j^*(x_{l_j}-x_{m_j})|
\\&\le 2 |(T^*y_j^*-T^*y_N^*)(2^{-1}(x_{l_j}-x_{m_j}))|+|T^*y_N^*(x_{l_j}-x_{m_j})|
\\&\le 2q_K(T^*y_j^*-T^*y_N^*)+|T^*y_N^*(x_{l_j}-x_{m_j})|
\\&<2Cv+|T^*y_N^*(x_{l_j}-x_{m_j})|,\end{aligned}$$
hence
$$u\le 2Cv+\limsup_{j\to\infty}|T^*y_N^*(x_{l_j}-x_{m_j})|=2Cv,$$
as the sequence $(x_{l_j}-x_{m_j})$ is weakly null.

Finally, the equivalences (i) $\Leftrightarrow$ (vi) and (ii) $\Leftrightarrow$ (vii) follow from \eqref{eq:Gant-dh}.
\end{proof}

We have included to the previous theorem also conditions (vi) and (vii) as they quantify the classical definition of the Dunford-Pettis property. However, in view of conditions (i) and (ii) and Theorem~\ref{q-DP-univ}, it is more natural to define the Dunford-Pettis property using the implication $$T^* \mbox{ is weakly compact }\Rightarrow T \mbox{ is completely continuous},$$
as this is the formulation which can be canonically quantified.

\begin{thm}\label{q-DP-dual}
Let $X$ be a Banach space. The following assertions are equivalent:
\begin{enumerate}
\item[(i)]
There is $C>0$ such that $\cc{T^\ast}\le C\wk{T}$ for any operator $T$ from a Banach space $Y$ to $X$.
\item[(ii)]
There is $C>0$ such that $\cc{T^\ast}\le C\wk{T}$ for any operator $T$ from $\ell_1$ to $X$.
\item[(iii)]
There is $C>0$ such that $\limsup|x_n^\ast(x_n)|\le C\wk{\{x_n:n\in\en\}}$ whenever $(x_n)$ is a bounded sequence in~$X$ and $(x_n^\ast)$ is a weakly null sequence in~$B_{X^\ast}$.
\item[(iv)]
There is $C>0$ such that $\ca[\rho]{x_n}\le C\de{x_n}$ for any bounded sequence $(x_n)$ in~$X$.
\item[(v)]
There is $C>0$ such that $\wca[\rho]{x_n}\le C\wk{\{x_n:n\in\en\}}$ for any bounded sequence $(x_n)$ in~$X$.
\end{enumerate}
\end{thm}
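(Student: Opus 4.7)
My plan is to prove the circular chain $(i) \Rightarrow (ii) \Rightarrow (iii) \Rightarrow (iv) \Rightarrow (v) \Rightarrow (i)$, mirroring closely the strategy used for Theorem~\ref{q-DP-direct} but with the roles of $X$ and $X^*$ interchanged at the appropriate places. The implication $(i) \Rightarrow (ii)$ is trivial (and preserves the constant).

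For $(ii) \Rightarrow (iii)$, given a bounded sequence $(x_n)$ in $X$ and a weakly null sequence $(x_n^*)$ in $B_{X^*}$, I would consider the operator $S\colon \ell_1 \to X$ defined by $S((\lambda_n)) = \sum_n \lambda_n x_n$. Since $Se_n = x_n$, the set $S(B_{\ell_1})$ lies in the closed absolutely convex hull of $\{x_n\colon n \in \en\}$, so the quantitative Krein theorem from \cite{f-krein} yields $\wk{S} \le 2\wk{\{x_n\colon n \in \en\}}$, and (ii) then gives $\cc{S^*} \le 2C\wk{\{x_n\colon n \in \en\}}$. Since $(x_n^*)$ is weakly Cauchy, $\ca{S^* x_n^*} \le \cc{S^*}$. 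Finally, as $S^* x_k^*$ is the element $(x_k^*(x_m))_m \in \ell_\infty$ and $x_k^*(x_m) \to 0$ as $k \to \infty$ for each fixed $m$, comparing $\|S^* x_k^* - S^* x_l^*\|_{\ell_\infty} \ge |x_k^*(x_m) - x_l^*(x_m)|$ with $l = m$ and letting $k \to \infty$ yields $\ca{S^* x_n^*} \ge \limsup_m |x_m^*(x_m)|$, giving (iii) with constant $2C$.

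For $(iii) \Rightarrow (iv)$, if $(x_n)$ is bounded with $\ca[\rho]{x_n} > t$, I would select interlaced indices $l_n < m_n < l_{n+1}$, a weakly compact set $K \subset B_{X^*}$, and functionals $x_n^* \in K$ with $|x_n^*(x_{l_n} - x_{m_n})| > t$. Passing to a subsequence so that $x_n^* \to x^*$ weakly in $K$, the sequence $y_n^* = (x_n^* - x^*)/2$ is weakly null in $B_{X^*}$, and the splitting $x_n^* = 2y_n^* + x^*$ bounds $t$ by $2\limsup|y_n^*(x_{l_n} - x_{m_n})| + \limsup|x^*(x_{l_n} - x_{m_n})|$. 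The crucial auxiliary estimate $\wk{\{x_{l_n} - x_{m_n}\colon n \in \en\}} \le \de{x_n}$ follows because, by weak*-compactness of $B_{X^{**}}$, any weak* cluster point of the difference sequence can, after passing to a common subnet, be written as a difference of two weak* cluster points of $(x_n)$. Then (iii) controls the first limsup by $C\de{x_n}$ and the single functional $x^* \in B_{X^*}$ contributes at most $\de{x_n}$ to the second, yielding (iv) with constant $2C+1$. For $(iv) \Rightarrow (v)$, applying (iv) to subsequences gives $\wca[\rho]{x_n} \le C\wde{x_n}$; combined with the estimate $\wde{x_n} \le 2\dh(\clu{X}{x_n}, X) \le 2\wk{\{x_n\colon n \in \en\}}$ from \cite[Theorem~1]{wesecom}, this yields (v) with constant $2C$.

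For $(v) \Rightarrow (i)$, given $T\colon Y \to X$ with $\cc{T^*} > u$ and $\wk{T} < v$, I would choose a weakly Cauchy sequence $(x_n^*)$ in $B_{X^*}$ with $\ca{T^* x_n^*} > u$, then interlaced indices $l_n < m_n < l_{n+1}$ and $y_n \in B_Y$ with $|(x_{l_n}^* - x_{m_n}^*)(Ty_n)| > u$. Since $\wk{\{Ty_n\colon n \in \en\}} \le \wk{T} < v$, assumption (v) gives $\wca[\rho]{Ty_n} \le Cv$, and after passing to a subsequence one has $\ca[\rho]{Ty_n} \le Cv$. The set $K_0 = \{(x_{l_n}^* - x_{m_n}^*)/2\colon n \in \en\} \cup \{0\}$ is weakly compact in $B_{X^*}$ (weakly null plus zero, since $(x_n^*)$ is weakly Cauchy), so one can find $N$ with $q_{K_0}(Ty_j - Ty_N) < Cv$ for all $j \ge N$. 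Splitting
\[
(x_{l_j}^* - x_{m_j}^*)(Ty_j) = (x_{l_j}^* - x_{m_j}^*)(Ty_j - Ty_N) + (x_{l_j}^* - x_{m_j}^*)(Ty_N),
\]
the first term is at most $2q_{K_0}(Ty_j - Ty_N) < 2Cv$ and the second vanishes as $j \to \infty$ (weak null-ness of $x_{l_j}^* - x_{m_j}^*$), giving $u \le 2Cv$. The main technical obstacle I expect is the auxiliary inequality $\wk{\{x_{l_n} - x_{m_n}\colon n \in \en\}} \le \de{x_n}$ in $(iii) \Rightarrow (iv)$: it requires careful subnet extraction so that both $(x_{l_{n_\alpha}})$ and $(x_{m_{n_\alpha}})$ separately converge weak* to cluster points of the original sequence $(x_n)$, rather than merely controlling the differences directly.
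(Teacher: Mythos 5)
Your proposal is correct and follows essentially the same route as the paper: the same cyclic chain of implications, the same operator $S:\ell_1\to X$ with the quantitative Krein theorem in (ii)$\Rightarrow$(iii), the same splitting via a weakly convergent subsequence and the cluster-point estimate $\wk{\{x_{l_n}-x_{m_n}:n\in\en\}}\le\de{x_n}$ in (iii)$\Rightarrow$(iv), and the same subsequence/seminorm argument in (v)$\Rightarrow$(i). The only (harmless) deviations are cosmetic: you extract $\limsup|x_m^*(x_m)|$ from $\ca{S^*x_n^*}$ by a direct coordinate comparison rather than via the interlaced sequence $(x_1^*,0,x_2^*,0,\dots)$ used in the paper, and you adjoin $0$ to make the set of normalized differences weakly compact rather than merely relatively weakly compact.
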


The proof is very similar to the proof of Theorem~\ref{q-DP-direct}. Anyway, for the sake of clarity we indicate its proof.

\begin{proof}
The implication (i) $\Rightarrow$ (ii) holds trivially, even with the same constant.

(ii) $\Rightarrow$ (iii)
Let us assume that there is $C>0$ such that $\cc{T^\ast}\le C\wk{T}$ for any operator $T$ from $\ell_1$ to $X$. Let $(x_n)$ be a bounded sequence in~$X$ and $(x_n^\ast)$ be a weakly null sequence in~$B_{X^\ast}$. We will show that
\[\limsup|x_n^\ast(x_n)|\le 2C\wk{\{x_n:n\in\en\}}.\]

Let us define operator $T:\ell_1\to X$ by $T(\lambda_n)=\sum_n\lambda_n x_n$. Since $Te_n=x_n$ for every $n\in\en$, where $e_n$ denotes the $n$-th basic vector in $\ell_1$, and since $(x_1^\ast,0,x_2^\ast,0,\ldots)$ is a weakly Cauchy sequence in $B_{X^\ast}$, we can write
\[
\limsup|x_n^\ast(x_n)| = \limsup|(T^\ast x_n^\ast)(e_n)| \le\limsup \|T^*x_n^*\|\le \cc{T^\ast} \le C\wk{T}.
\]
By \cite[Theorem~2]{f-krein},
\[\wk{T}\le 2\wk{\{x_n:n\in\en\}},\]
and the conclusion follows.

The implications (iii) $\Rightarrow$ (iv) and (iv) $\Rightarrow$ (v) can be proved by copying the proofs of respective implications of Theorem~\ref{q-DP-direct}, interchanging the role of $X$ and $X^*$ and replacing $\rho^*$ by $\rho$.

(v) $\Rightarrow$ (i) 
 Suppose that (v) holds with a constant $C>0$. We will show that (i) holds with $2C$.  Let $T$ be an operator from a Banach space $Y$ to $X$. Fix arbitrary numbers $u<\cc{T^*}$ and $v>\wk[X]{T}$. It suffices to show that $u\le 2Cv$.

Since $\cc{T^*}>u$, there is a weakly Cauchy sequence $(x_n^*)$ in $B_{X^*}$ such that $\ca{T^*x_n^*}>u$. Let $l_n<m_n<l_{n+1}$, $n\in\en$, be a sequence of natural numbers and $(y_n)$ be a sequence in $B_{Y}$ such that $|(T^*x_{l_n}^*-T^*x_{m_n}^*)(y_n)|>u$ for every $n\in\en$. 
Further, using the inequality $\wk[X]{T}<v$ we get $\wk[X]{\{Ty_n:n\in\en\}}<v$. It follows then from the assumption (v) that $\wca[\rho]{Ty_n}<Cv$. By passing to a subsequence, if necessary, we may assume that $\ca[\rho]{Ty_n}<Cv$.
The set $K=\{\frac{x_{l_n}^*-x_{m_n}^*}2:n\in\en\}$ is relatively weakly compact in $B_{X^*}$, and hence there is $N\in\en$ such that $q_K(Ty_i-Ty_j)<Cv$ for every $i,j\ge N$.
It follows that for $j\ge N$ we have
$$\begin{aligned}u&<|(T^*x_{l_j}^*-T^*x_{m_j}^*)(y_j)|=|(x_{l_j}^*-x_{m_j}^*)(Ty_j)|
\\&\le 2|(2^{-1}(x_{l_j}^*-x_{m_j}^*))(Ty_{j}-Ty_{N})|+|(x_{l_j}^*-x_{m_j}^*)(Ty_{N})|
\\&\le 2q_K(Ty_{j}-Ty_{N})+|(x_{l_j}^*-x_{m_j}^*)(Ty_{N})|
\\&<2Cv+|(x_{l_j}^*-x_{m_j}^*)(Ty_{N})|,\end{aligned}$$
hence
$$u\le 2Cv+\limsup_{j\to\infty}|(x_{l_j}^*-x_{m_j}^*)(Ty_{N})|=2Cv,$$
as the sequence $(x_{l_j}^*-x_{m_j}^*)$ is weakly null.
\end{proof}

\begin{definition}
We say that a Banach space $X$ has the \emph{direct quantitative Dunford-Pettis property} if $X$ satisfies the equivalent conditions of Theorem~\ref{q-DP-direct}. In case $X$ satisfies the equivalent conditions of Theorem~\ref{q-DP-dual} we say that $X$ has the \emph{dual quantitative Dunford-Pettis property}.
\end{definition}

It is clear that while Theorem~\ref{q-DP-direct} aims to quantify the classical formulation ``every weakly compact operator from $X$ into a Banach space $Y$ is completely continuous'', whereas Theorem~\ref{q-DP-dual} is a quantification of the topological characterization of the Dunford-Pettis property ``every weakly convergent sequence in~$X$ is $\rho$-convergent''. Example~\ref{exa-podrobny} below shows that these two quantifications define different classes of Banach spaces in general.

However, the two quantifications are still connected in a way.  From the characterization (iii) of the Dunford-Pettis property in Theorem~\ref{t:znama} it is obvious that if the dual space $X^*$ of a Banach space~$X$ has the Dunford-Pettis property then the space $X$ itself has the same property. The following theorem describes an analogous result for quantitative versions.

\begin{thm}
\label{dual-conn}
For any Banach space $X$, the following assertions hold:
\begin{enumerate}
\item[(a)]
If $X^*$ has the dual quantitative Dunford-Pettis property then $X$ has the direct quantitative Dunford-Pettis property.
\item[(b)]
If $X^*$ has the direct quantitative Dunford-Pettis property then $X$ has the dual quantitative Dunford-Pettis property.
\end{enumerate}
\end{thm}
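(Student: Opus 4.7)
The plan is to derive both parts directly from characterization (iv) of Theorems~\ref{q-DP-direct} and~\ref{q-DP-dual}, i.e., from inequalities of the form $\ca[\rho^*]{\cdot}\le C\de{\cdot}$ on bounded sequences in $X^*$ and $\ca[\rho]{\cdot}\le C\de{\cdot}$ on bounded sequences in $X$, respectively. Both statements then amount to transferring such an inequality between $X$ and $X^*$ via the canonical embedding $\kappa_X$.

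For part (a), observe that on $X^*$ the topology $\rho_{X^*}$ is generated by the seminorms $q_K$ with $K$ weakly compact in $B_{X^{**}}$, whereas $\rho^*_{X^*}$ uses only those $K$ lying in $\kappa_X(B_X)$. Since $\kappa_X$ is weak-to-weak continuous, the image of any weakly compact subset of $B_X$ is weakly compact in $B_{X^{**}}$, so $\rho^*_{X^*}$ is coarser than $\rho_{X^*}$. Hence $\ca[\rho^*_{X^*}]{x_n^*}\le\ca[\rho_{X^*}]{x_n^*}$ for every bounded sequence $(x_n^*)$ in $X^*$, and the dual quantitative Dunford-Pettis property of $X^*$ immediately yields condition~(iv) of Theorem~\ref{q-DP-direct} for $X$.

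For part (b), given a bounded sequence $(x_n)\subset X$, I would apply the direct quantitative Dunford-Pettis property of $X^*$ to the bounded sequence $(\kappa_X(x_n))\subset X^{**}$. The topologies $\rho_X$ on $X$ and $\rho^*_{X^{**}}$ on $X^{**}$ share the same defining family of seminorms (all weakly compact subsets of $B_{X^*}$) and these seminorms agree on $\kappa_X$-images of the sequence, so $\ca[\rho_X]{x_n}=\ca[\rho^*_{X^{**}}]{\kappa_X(x_n)}$. The conclusion then follows once we establish the identity
\[
\de{\kappa_X(x_n)}=\de{x_n},
\]
which is the main obstacle.

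To prove this identity I would use that $\de{\cdot}$ equals the diameter of the set of weak$^*$ cluster points of the sequence in the bidual. The inequality $\ge$ is trivial (restrict the supremum to finite subsets of $\kappa_{X^*}(B_{X^*})\subset B_{X^{***}}$). For the converse, every weak$^*$ cluster point $\xi\in X^{****}$ of $(\kappa_X(x_n))$ vanishes on $\kappa_X(X)^\perp\subset X^{***}$ because $f(\kappa_X(x_n))=0$ for every $f\in\kappa_X(X)^\perp$. Using the direct-sum decomposition $X^{***}=\kappa_{X^*}(X^*)\oplus\kappa_X(X)^\perp$, in which the projection onto the first summand has norm one (it sends $g\in X^{***}$ to $\kappa_{X^*}(x_g^*)$ with $x_g^*(x):=g(\kappa_X(x))$, and clearly $\|x_g^*\|\le\|g\|$), one checks that any $\xi\in X^{****}$ vanishing on $\kappa_X(X)^\perp$ satisfies $\|\xi\|_{X^{****}}=\|\pi(\xi)\|_{X^{**}}$, where $\pi\colon X^{****}\to X^{**}$ is the canonical norm-one projection $\pi(\xi)(x^*):=\xi(\kappa_{X^*}(x^*))$. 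A routine verification shows that $\pi$ maps the weak$^*$ cluster points of $(\kappa_X(x_n))$ in $X^{****}$ bijectively onto those of $(x_n)$ in $X^{**}$; since the difference of two such cluster points again vanishes on $\kappa_X(X)^\perp$, the map is isometric on such differences, and the two diameters coincide.
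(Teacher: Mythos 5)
Your argument is correct and follows essentially the same route as the paper: part (a) rests on the inequality $\ca[\rho^*]{\cdot}\le\ca[\rho]{\cdot}$ on the dual space, and part (b) goes through condition (iv), identifying $\rho_X$ with the restriction of $\rho^*_{X^{**}}$ to $\kappa_X(X)$ and using the invariance of $\delta$ under the canonical embedding. The only divergence is that you establish $\de{\kappa_X(x_n)}=\de{x_n}$ via the Dixmier decomposition of $X^{***}$, whereas it follows in one line from the definition of $\delta$ through finite subsets of the dual ball, since each $f\in B_{X^{***}}$ acts on $\kappa_X(x_i)-\kappa_X(x_j)$ exactly as $f\circ\kappa_X\in B_{X^*}$ acts on $x_i-x_j$, and conversely $\kappa_{X^*}(B_{X^*})\subset B_{X^{***}}$.
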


\begin{remark}\label{rm-duality} The previous theorem can be stated more precisely as follows: Let $X$ be a Banach space.
\begin{enumerate}
	\item [(a')] If $X^*$ satisfies one of the conditions (i), (iii), (iv) or (v) of Theorem~\ref{q-DP-dual} with a given constant $C$, then $X$ satisfies the respective condition of Theorem~\ref{q-DP-direct} with the same constant.
	\item [(b')] If $X^*$ satisfies one of the conditions  (iii), (iv) or (v) of Theorem~\ref{q-DP-direct} with a given constant $C$, then $X$ satisfies the respective condition of Theorem~\ref{q-DP-dual} with the same constant. In case of the assertion (i), the respective condition (i) in Theorem~\ref{q-DP-dual} is satisfied with $4C$.
\end{enumerate}
\end{remark}

\begin{proof} 
The first assertion is almost obvious, it uses only the easy facts that $\cc{T}\le\cc{T^{**}}$ for each operator $T$ and that $\ca[\rho^*]{\cdot}\le\ca[\rho]{\cdot}$ on a dual space. Let us show the second assertion for the four specified cases:

(i) Let $T:Y\to X$ be a bounded operator. Using the assumption and \eqref{eq:Gant-dh} we get
$$\cc{T^*}\le C\wk[X^{**}]{T^{**}}\le 4C\wk[X]{T}.$$

(iii) Let $(x_n^*)$ be a weakly null sequence in  $B_{X^*}$ and $(x_n)$ be a bounded sequence in $X$. Then the assumption gives
$$\limsup|x_n^*(x_n)|\le C \wk[X^{**}]{\{x_n:n\in\en\}}\le C\wk{\{x_n:n\in\en\}},$$
because the inclusion $X\subset X^{**}$ yields the  second inequality.

(iv) Let $(x_n)$ be any bounded sequence in $X$. Then $\ca[\rho]{x_n}=\ca[\rho^*]{x_n}$, where the topology $\rho^*$ on the right-hand side is considered on $X^{**}$. By the assumption
we have $\ca[\rho^*]{x_n}\le C\de{x_n}$. The quantity $\de{x_n}$ does not depend on whether we consider the sequence in $X$ or in $X^{**}$. It follows that $\ca[\rho]{x_n}\le C\de{x_n}$.

(v)  Let $(x_n)$ be any bounded sequence in $X$. Then $\wca[\rho]{x_n}=\wca[\rho^*]{x_n}$ (similarly as in the previous case). By the assumption we have $$\wca[\rho^*]{x_n}\le C \wk[X^{**}]{\{x_n:n\in\en\}}.$$
We conclude by noticing that $ \wk[X^{**}]{\{x_n:n\in\en\}}\le \wk{\{x_n:n\in\en\}}$ as in (iii).
\end{proof}

Now we are going to mention which classes of Banach spaces do have quantitative Dunford-Pettis property.
To this end let us recall the classical terminology concerning $\L_p$ spaces. If $X$ and $Y$ are isomorphic Banach spaces, by $\dd(X,Y)$ we denote their Banach-Mazur distance, i.e.,
\[\dd(X,Y) = \inf\{\|T\|\|T^{-1}\| : T \textnormal{ is an invertible operator from } X \textnormal{ onto } Y\}.\]

Let $1\le p \le\infty$ and $1\le \lambda<\infty$. A Banach space $X$ is said to be an $\L_{p,\lambda}$ space if for every finite-dimensional subspace $B$ of $X$ there is a finite-dimensional subspace $C$ of $X$ such that $C\supset B$ and $\dd(C,\ell^n_p)\le\lambda$ where $n=\dim C$. 

A Banach space is said to be an $\L_p$ space, $1\le p\le\infty$, if it is an $\L_{p,\lambda}$ space for some $\lambda<\infty$.

One of our main objectives in the rest of this paper will be the proof of the following theorem.

\begin{thm}\label{l1-li}
Every $\L_1$ space and every $\L_\infty$ space has both the direct and the dual quantitative Dunford-Pettis properties.
\end{thm}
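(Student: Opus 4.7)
The plan is to prove the $\L_\infty$ statement first and then deduce the $\L_1$ statement by duality. The key structural input is the classical fact that the dual of every $\L_1$ space is an $\L_\infty$ space. Combined with Theorem~\ref{dual-conn}, this reduces the entire theorem to proving that every $\L_\infty$ space has both the direct and the dual quantitative Dunford-Pettis property.

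For the direct property of $\L_\infty$, I would first treat $X=C(K)$. Its dual $C(K)^*=\M(K)$ is an $L^1$-type space, so the coincidence result of Section~\ref{sec-L1} yields an estimate $\omega(B)\le C\cdot\wk[C(K)^*]{B}$ on bounded sets $B\subset C(K)^*$; this coincidence is the quantitative content of the classical Dunford-Pettis characterization of relative weak compactness in $L^1$ via uniform integrability. Inserting it into condition~(vi) of Theorem~\ref{q-DP-univ} delivers condition~(v) of Theorem~\ref{q-DP-direct} and thus the direct property for $C(K)$. The extension to an arbitrary $\L_\infty$ space $X$ proceeds by a local-reflexivity and bidual argument: $X^{**}$ embeds, after an isomorphism whose norm depends only on the $\L_\infty$-parameter, into a $C(K)$ space, and each of the sequential formulations of Theorem~\ref{q-DP-direct} is preserved under such embeddings and under the passage between $X$ and $X^{**}$.

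The dual quantitative Dunford-Pettis property for $C(K)$, to be established in Section~\ref{sec-dualCK}, is expected to be the main obstacle. One has to bound $\wca[\rho]{f_n}$ in terms of $\wk[C(K)]{\{f_n:n\in\en\}}$ for bounded sequences $(f_n)\subset C(K)$. Since $\rho$-Cauchyness on $C(K)$ amounts to uniform convergence on every weakly compact subset of $\M(K)$, this estimate requires quantifying classical measure-theoretic results, in particular a quantitative Grothendieck criterion for weak convergence of sequences of measures and a quantitative Vitali-Hahn-Saks argument. The idea is: from an obstruction to $\rho$-Cauchyness of $(f_n)$, extract along a subsequence a weakly compact family of measures detecting a persistent oscillation; then use a Krein-type argument (in the spirit of \cite{f-krein}) to replace that family by a single sequence in $B_{\M(K)}$ whose weak$^*$-cluster points provide a matching lower bound for $\wk[C(K)]{\{f_n\}}$. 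The transfer from $C(K)$ to a general $\L_\infty$ space is the same local-reflexivity argument as above.

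Once both quantitative properties are known for every $\L_\infty$ space, the $\L_1$ case follows immediately. If $X$ is an $\L_1$ space then $X^*$ is an $\L_\infty$ space, hence $X^*$ enjoys both properties. Theorem~\ref{dual-conn}(a), applied with $X^*$ satisfying the dual property, yields the direct property for $X$, while Theorem~\ref{dual-conn}(b), applied with $X^*$ satisfying the direct property, yields the dual property for $X$. The principal technical difficulty is thus concentrated in the quantitative measure-theoretic arguments behind Section~\ref{sec-dualCK}; the direct property and the coincidence of measures in $L^1$ rely on more standard quantitative adaptations of well-known results.
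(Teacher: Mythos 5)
Your overall architecture coincides with the paper's: establish both properties for $\L_\infty$ spaces with $C(K)$ as the base case, and deduce the $\L_1$ case from Theorem~\ref{dual-conn} together with the fact that the dual of an $\L_1$ space is an $\L_\infty$ space. Your treatment of the direct property for $C(K)$ (feeding the coincidence of $\omega(\cdot)$ and $\wk[X^*]{\cdot}$ in $L^1$ into the automatic quantification of Theorem~\ref{q-DP-univ}) is exactly what the paper does in Theorem~\ref{main-L1predual}.

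The genuine gap is your transfer from $C(K)$ to a general $\L_\infty$ space. You assert that the sequential formulations of Theorems~\ref{q-DP-direct} and~\ref{q-DP-dual} are ``preserved under such embeddings,'' meaning under an isomorphic embedding of $X^{**}$ into a $C(K)$ space. They are not: every Banach space embeds isometrically into some $C(K)$ space, and, for instance, $\ell_2\subset C([0,1])$ fails both quantitative properties (indeed the Dunford--Pettis property itself), while $C([0,1])$ enjoys them. Neither quantitative Dunford--Pettis property passes to arbitrary closed subspaces; what is true, and what the paper uses (Proposition~\ref{qdp-iq}), is stability under \emph{complemented} subspaces. Accordingly, the paper's transfer runs through the structural fact that $X^*$ is isomorphic to a \emph{complemented} subspace of an $L^1(\mu)$ space: $L^1(\mu)$ has the dual quantitative property by the coincidence $\omega(\cdot)=\wk[]{\cdot}$ (Corollary~\ref{main-L1-cor}) and the direct one because its dual is a $C(K)$ space with the dual property (Theorem~\ref{qdpt-ck}, Corollary~\ref{c:L1mudirect}); both pass to the complemented subspace $X^*$, and Theorem~\ref{dual-conn} then hands them down to $X$ in the interchanged form. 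Your route could be repaired by invoking the fact that the bidual of an $\L_\infty$ space is \emph{complemented} in a $C(K)$ space, but the complementation must appear explicitly, as must the (true, but not automatic) passage of condition (iii) of Theorem~\ref{q-DP-direct} from $X^{**}$ to $X$ via the inequality $\wk[X^{***}]{A}\le\wk[X^*]{A}$ for $A\subset X^*$. Secondarily, your plan for the dual property of $C(K)$ (quantitative Vitali--Hahn--Saks, a Krein-type extraction) is only a programme and differs from the paper's actual mechanism, which reduces a weakly compact set of measures to a uniformly integrable set $P\subset L^1(\mu)$ for a single control measure $\mu$ and then applies a quantitative Egoroff theorem to obtain the exact identity $\ca[\rho]{f_n}=\de{f_n}$.
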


The case of $\L_\infty$ spaces follows from Theorems~\ref{t:Linftydirect} and~\ref{t:Linftydual}.
The case of $\L_1$ spaces then follows from Theorem~\ref{dual-conn} because the dual of an $\L_1$ space is an $\L_\infty$ space by \cite[p. 58]{JoLi}.

The following example shows that the Dunford-Pettis property is not automatically quantitative in either sense.

\begin{example}\label{protipriklad}
There is a Banach space $X$ with $X^*$ separable such that 
\begin{itemize}
\item
$X$ has the dual quantitative Dunford-Pettis property, but not the direct quantitative Dunford-Pettis property,
\item
$X^*$ has the direct quantitative Dunford-Pettis property, but not the dual quantitative Dunford-Pettis property.
\item $X\oplus X^*$ has the Dunford-Pettis property but not any of its two quantitative versions.
\end{itemize}
\end{example}

The example is constructed in Section~\ref{sec-example} where several more properties of this space are stated and proved.

%%%%%%%%%%%%%%%%%%%%%%%%%%%%%%%%%%%%%%%%%%%
\section{The Schur property and quantitative Dunford-Pettis properties}
\label{sec-schur}

Let us recall that a Banach space has the {\it Schur property} if any weakly convergent sequence is norm convergent. It is obvious that any Banach space $X$ with the Schur property enjoys the Dunford-Pettis property as any operator defined on $X$ is completely continuous. A well-known consequence of this observations says that a Banach space, whose dual has the Schur property, has  the Dunford-Pettis property. Moreover, such spaces enjoy also  the reciprocal Dunford-Pettis property. We will show that these results can be refined in a quantitative way. Let us start with the following easy consequence of Rosenthal's $\ell_1$-theorem.

\begin{lemma}\label{lm-nonell1} Let $X$ be a Banach space not containing an isomorphic copy of $\ell_1$. 
 Let $Y$ be any Banach space and $T:X\to Y$ be a bounded operator. Then
$$\wk[Y]{T}\le\omega(T)\le\chi(T)\le\beta(T)\le\cc{T}.$$
\end{lemma}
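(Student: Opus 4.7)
The first three inequalities $\wk[Y]{T}\le\omega(T)\le\chi(T)\le\beta(T)$ are purely set-theoretic: applied to the bounded set $A=T(B_X)$, they are just instances of the general diagram \eqref{eq:diagrammsets} (together with the remark preceding it that $\omega(A)\le\chi(A)$), combined with the convention that a set-measure applied to $T$ denotes that measure applied to $T(B_X)$. So there is nothing to do for these.

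The only inequality that genuinely uses the hypothesis on $X$ is the last one, $\beta(T)\le\cc{T}$, and my plan is to derive it directly from Rosenthal's $\ell_1$-theorem. Fix an arbitrary sequence $(y_k)$ in $T(B_X)$ and choose $x_k\in B_X$ with $Tx_k=y_k$. Since $X$ contains no isomorphic copy of $\ell_1$, Rosenthal's theorem produces a weakly Cauchy subsequence $(x_{k_n})$ of $(x_k)$. Then $(Tx_{k_n})$ is a subsequence of $(y_k)$, and because $(x_{k_n})$ is a weakly Cauchy sequence in $B_X$, the very definition of $\cc{T}$ yields $\ca{Tx_{k_n}}\le\cc{T}$.

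Consequently $\wca{y_k}\le\ca{Tx_{k_n}}\le\cc{T}$; taking the supremum over all sequences $(y_k)$ in $T(B_X)$ gives $\beta(T)\le\cc{T}$, which completes the chain.

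There is essentially no serious obstacle here: the argument is a bookkeeping exercise in the definitions of $\beta$, $\wca{\cdot}$ and $\cc{T}$, with Rosenthal's theorem providing the one non-trivial ingredient that converts an arbitrary sequence in $B_X$ into a weakly Cauchy one, precisely the class of sequences the quantity $\cc{T}$ controls.
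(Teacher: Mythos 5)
Your proof is correct and follows essentially the same route as the paper: the first three inequalities are read off from the diagram \eqref{eq:diagrammsets} applied to $T(B_X)$, and the last one is obtained by applying Rosenthal's $\ell_1$-theorem to extract a weakly Cauchy subsequence and then invoking the definitions of $\wca{\cdot}$, $\beta$ and $\cc{T}$. The only cosmetic difference is that you start from a sequence in $T(B_X)$ and lift it to $B_X$, whereas the paper starts directly from a sequence in $B_X$; these are interchangeable.
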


\begin{proof} 
 Only the last inequality requires a proof, the remaining ones follow from \eqref{eq:diagrammsets}. So, let $(x_k)$ be any sequence in $B_X$. By Rosenthal's $\ell_1$-theorem (see \cite{ros}) there is a weakly Cauchy subsequence $(x_{k_n})$. Thus
$$\wca{Tx_k}\le\ca{Tx_{k_n}}\le\cc{T},$$
hence $\beta(T)\le\cc{T}$ which we wanted to show.
\end{proof} 

In the following proposition we explicitly formulate a trivial fact on Banach spaces with the Schur property, so no proof is required.

\begin{prop}\label{schur-triv} Let $X$ be a Banach space with the Schur property. Then any bounded linear operator $T:X\to Y$ for any Banach space $Y$ is completely continuous. In particular, $X$ has the direct quantitative Dunford-Pettis property. 
\end{prop}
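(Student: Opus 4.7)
The plan is to verify both conclusions directly from the definition of the Schur property. For the first assertion, I would take any weakly convergent sequence $(x_n)$ in $X$ with weak limit $x$; by the Schur property, $x_n\to x$ in norm, and the norm continuity of $T$ then yields $Tx_n\to Tx$ in norm. Hence $T$ sends weakly convergent sequences to norm convergent ones, which by the equivalent formulation of complete continuity recalled in the introduction is exactly what is required.

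For the quantitative statement, my strategy is to exploit condition~(i) of Theorem~\ref{q-DP-direct}, which reads $\cc{T}\le C\,\wk[X^*]{T^*}$. The goal is to establish the stronger fact that $\cc{T}=0$ for every operator $T$ defined on a Schur space $X$; the inequality then holds trivially with any positive constant, say $C=1$. Thus the entire quantitative content reduces to showing that $(Tx_n)$ is norm Cauchy whenever $(x_n)$ is a weakly Cauchy sequence in $B_X$.

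The key auxiliary observation is that, in a Schur space, every weakly Cauchy sequence is already norm Cauchy. I would argue by contradiction: if $(x_n)$ were weakly Cauchy but failed to be norm Cauchy, one could extract subsequences with indices $n_k<m_k\to\infty$ such that $\|x_{n_k}-x_{m_k}\|\ge\varepsilon$ for some $\varepsilon>0$. Since $(x_n)$ is weakly Cauchy, the difference sequence $(x_{n_k}-x_{m_k})$ is weakly null, and the Schur property then forces $\|x_{n_k}-x_{m_k}\|\to 0$, a contradiction.

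Combining these steps, for any weakly Cauchy $(x_n)$ in $B_X$ the image $(Tx_n)$ is the image under a bounded operator of a norm Cauchy sequence, so it is itself norm Cauchy, giving $\ca{Tx_n}=0$ and hence $\cc{T}=0$. The direct quantitative Dunford-Pettis property then follows with any positive constant. I anticipate no real obstacle: the entire argument merely unwinds the definition of the Schur property, which is precisely why the author remarks that no proof is required.
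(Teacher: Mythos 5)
Your proposal is correct and is exactly the intended argument: the paper explicitly declines to give a proof because the statement ``merely unwinds the definition of the Schur property,'' and your write-up does precisely that unwinding (including the standard observation that weakly Cauchy sequences in a Schur space are norm Cauchy, via the weakly null difference subsequence, which gives $\cc{T}=0$ and hence condition (i) of Theorem~\ref{q-DP-direct} with any constant). Nothing is missing.
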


\begin{thm}
\label{t:qsch2}
Let $X$ be a Banach space whose dual has the Schur property. 
\begin{itemize}
	\item[(i)] Let $T:X\to Y$  be a bounded operator. Then
\begin{equation}
\label{eq:qsch7}
\wk[Y]{T}\le \omega(T)\le\chi(T)\le\cc{T}\le 2\omega(T^*)=2\chi(T^*)\le 4\chi(T).
\end{equation}
 \item[(ii)] The space $X$ has the dual quantitative Dunford-Pettis property. More precisely, any bounded sequence $(x_n)$ in $X$ satisfies $\ca[\rho]{x_n}=\de{x_n}$.
\end{itemize}
\end{thm}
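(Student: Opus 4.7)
The plan relies on three consequences of $X^*$ having the Schur property: (a) $X$ has the Dunford-Pettis property, which is classical and is the easy consequence of Gantmacher and Schauder mentioned in the introduction; (b) every weakly compact subset of $X^*$ is norm compact (by Eberlein-\v{S}mulyan plus the Schur property), so $\omega$ and $\chi$ coincide on bounded subsets of $X^*$; (c) $X$ contains no isomorphic copy of $\ell_1$, which follows from Pelczy\'nski's theorem that $\ell_1\hookrightarrow X$ forces $L_1[0,1]\hookrightarrow X^*$, incompatible with the Schur property of $X^*$ via the Rademacher functions.

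For assertion (i), each link in the chain is a direct consequence of an earlier result: $\wk[Y]{T}\le\omega(T)\le\chi(T)$ comes from~\eqref{eq:diagrammsets}; $\chi(T)\le\cc{T}$ is Lemma~\ref{lm-nonell1} via (c); $\cc{T}\le 2\omega(T^*)$ is condition~(ii) of Theorem~\ref{q-DP-univ} via (a); $\omega(T^*)=\chi(T^*)$ is (b) applied to the bounded set $T^*(B_{Y^*})$; and $\chi(T^*)\le 2\chi(T)$ is~\eqref{eq:Schauder}.

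For assertion (ii), it suffices to prove $\ca[\rho]{x_n}=\de{x_n}$ for any bounded sequence $(x_n)\subset X$, because this is condition~(iv) of Theorem~\ref{q-DP-dual} with $C=1$. The inequality $\de{x_n}\le\ca[\rho]{x_n}$ is immediate from $\F_2\subset\F_3$. For the converse, fix $M>0$ bounding $(\|x_n\|)$ and a weakly compact $F\subset B_{X^*}$, which is norm compact by (b). Given $\ep>0$, select a finite set $G\subset F$ with $F\subset G+\ep B_{X^*}$; the triangle inequality then gives $q_F(x_n-x_{n'})\le q_G(x_n-x_{n'})+2M\ep$ for all $n,n'\in\en$. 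Taking $\sup_{n,n'\ge n_0}$ and then $\inf_{n_0}$ on both sides, the right-hand side is dominated by $\de{x_n}+2M\ep$; letting $\ep\to 0$ and taking the supremum over all weakly compact $F\subset B_{X^*}$ yields $\ca[\rho]{x_n}\le\de{x_n}$, as needed.

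The only non-routine ingredient is (c), which rests on Pelczy\'nski's deep theorem; the rest of the argument merely assembles earlier material.
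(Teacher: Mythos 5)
Your proof of assertion (i) is correct and coincides with the paper's: both assemble the chain from \eqref{eq:diagrammsets}, from Lemma~\ref{lm-nonell1} (justified by the non-containment of $\ell_1$, which you and the paper both derive from Pe{\l}czy\'nski's theorem together with the failure of the Schur property in $L^1$), from Theorem~\ref{q-DP-univ}(ii) via the Dunford--Pettis property of $X$, from the coincidence of $\omega$ and $\chi$ on bounded subsets of a Schur space, and from \eqref{eq:Schauder}.

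For assertion (ii) you take a genuinely different and more elementary route. The paper deduces $\ca[\rho]{x_n}\le\de{x_n}$ indirectly: it first notes that $X^*$ satisfies condition (i) of Theorem~\ref{q-DP-direct} with $C=0$ (Proposition~\ref{schur-triv}), then tracks the constants through the implications (i) $\Rightarrow$ (ii) $\Rightarrow$ (iii) $\Rightarrow$ (iv) of that theorem to obtain condition (iv) with $C=2\cdot 0+1=1$, and finally transfers this to $X$ via the duality statement of Remark~\ref{rm-duality}. You instead prove the inequality directly: every weakly compact $F\subset B_{X^*}$ is norm compact by the Schur property, hence $q_F\le q_G+2M\ep$ on differences of terms of the sequence for a suitable finite $G\subset B_{X^*}$, and the $\sup$--$\inf$--$\sup$ manipulation then dominates the $F$-Cauchy defect by $\de{x_n}+2M\ep$; letting $\ep\to0$ and taking the supremum over $F$ finishes the argument. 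This is correct and self-contained, avoids both the duality machinery and the constant-chasing through Theorem~\ref{q-DP-direct}, and still yields the sharp constant $C=1$; what it forgoes is the paper's illustration of the general transfer principle of Theorem~\ref{dual-conn} and Remark~\ref{rm-duality}, which is reused elsewhere (e.g.\ in Corollary~\ref{c:L1mudirect}).
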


\begin{proof}
(i) The first two inequalities follow from \eqref{eq:diagrammsets}, the third one follows from Lemma~\ref{lm-nonell1} as $X$ does not contain an isomorphic copy of $\ell_1$. (If $X$ contains an isomorphic copy of $\ell_1$, by \cite[Proposition 3.3]{pelc} the dual space $X^*$ contains an isomorphic
copy of $C(\{0,1\}^\en)^*$, hence also an isomorphic copy if $C([0,1])^*$. The space $C([0,1])^*$ fails 
the Schur property as it contains a copy of $L^1(0,1)$. Thus $X^*$ fails the Schur property as well.)

The fourth inequality follows from Theorem~\ref{q-DP-univ} as $X$ has the Dunford-Pettis property.
(This follows from the second assertion (ii) or by the following reasoning. If $T:X\to Y$ is weakly compact, then $T^*:Y^*\to X^*$ is weakly compact as well by the Gantmacher theorem. Since $X^*$ has the Schur property, $T^*$ is compact. By the Schauder theorem, $T$ is compact as well, hence $T$ is completely continuous.)

Further, since $X^*$ has the Schur property, $\omega(T^*)=\chi(T^*)$.

The last inequality follows from \eqref{eq:Schauder}.

(ii) Since $X^*$ has the Schur property, it has the direct quantitative Dunford-Pettis property by Proposition~\ref{schur-triv}.
Hence $X$ has the dual version due to Theorem~\ref{dual-conn}. 

Let us show the precise version. If $X^*$ has the Schur property, it satisfies the condition (i) of Theorem~\ref{q-DP-direct} with $C=0$. Therefore it satisfies the conditions (ii) and (iii) of the same theorem with $C=0$ as well, so it satisfies the condition (iv) of the mentioned theorem with $C=1$ (all the implications follows from the computation of constants within the proof). By Remark~\ref{rm-duality} we get that
$X$ satisfies the condition (iv) of Theorem~\ref{q-DP-dual} with $C=1$, i.e., $\ca[\rho]{x_n}\le \de{x_n}$ for each bounded sequence $(x_n)$ in $X$. Since the converse inequality is obvious, the proof is completed. 
\end{proof}

Let us point out that the Schur property of the dual of a Banach space $X$ implies by Theorem~\ref{t:qsch2}(i) the inequality
\begin{equation}\label{eq:eq:rdpp}
\wk[Y]{T}\le \cc{T}
\end{equation}
for any operator $T:X\to Y$ from $X$ to a Banach space $Y$. This can be considered as a quantitative strengthening of the above mentioned fact that a space, whose dual has the Schur property, possesses the reciprocal Dunford-Pettis property.

It is worth noticing that a Banach space $X$ whose dual has the Schur property need not have to possess the direct quantitative Dunford-Pettis property (see Example~\ref{exa-podrobny}).

%%%%%%%%%%%%%%%%%%%%%%%%%%%%%%%%%%%%%%%%%%%%%%%%%
\section{Measuring weak non-compactness in $L^1$ spaces}
\label{sec-L1}

The aim of this section is to show that in the spaces of the form $L^1(\mu)$  the quantities $\omega(\cdot)$
and $\wk[]{\cdot}$ are equal. This is proved first for the case of a finite measure $\mu$, then for spaces $\ell_1(\Gamma)$ and finally for a general $\sigma$-additive non-negative measure $\mu$. 

\begin{prop}\label{L1-konecne}
Let $Y=L^1(\mu)$, where $\mu$ is a finite non-negative $\sigma$-additive measure and $X$ be any Banach space containing isometrically $Y$ as a subspace. Then
\begin{equation}
		\label{eq:L1-kon-vse}
		\omega(A)=\wk{A}=\wck{A}=\inf_{c>0}\sup \left\{\int (|f|-c)^+\di\mu: f\in A\right\}
\end{equation}
for each bounded set $A\subset Y$.
\end{prop}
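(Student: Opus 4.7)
The plan is to establish the chain
\[
\omega(A) \leq \eta(A) \leq \wck[X]{A} \leq \wk[X]{A} \leq \omega(A),
\]
where $\eta(A) := \inf_{c>0}\sup_{f\in A}\int(|f|-c)^+\di\mu$ denotes the proposed common value; the last two inequalities are \eqref{eq:wcomp1} and \eqref{eq:wcomp2}, so the chain collapses to equality once the first two are proven.

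For the upper bound $\omega(A) \leq \eta(A)$, I would use truncation. For each $c>0$, set $T_cf := \operatorname{sign}(f)\cdot(|f|\wedge c)$ and $K_c := \{T_cf : f \in A\}$. Since $\mu$ is finite and every element of $K_c$ is essentially bounded by $c$, the set $K_c$ is trivially uniformly integrable, hence relatively weakly compact in $Y$ by the Dunford--Pettis theorem, and therefore in $X$. Since $\|f - T_cf\|_1 = \int(|f|-c)^+\di\mu$, we get $\dh(A,\overline{K_c}) \leq \sup_{f\in A}\int(|f|-c)^+\di\mu$, and taking the infimum over $c>0$ yields the bound.

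The decisive step is the lower bound $\eta(A) \leq \wck[X]{A}$. Fix $\eta < \eta(A)$ and choose $f_n\in A$ with $\int(|f_n|-n)^+\di\mu > \eta$; then $\mu(E_n)\leq\|f_n\|_1/n\to 0$, where $E_n:=\{|f_n|>n\}$. The isometric inclusion $Y\hookrightarrow X$ induces an isometric embedding $\iota\colon Y^{**}\to X^{**}$ with image $Y^{\perp\perp}$, so every weak$^*$ cluster point $\phi\in X^{**}$ of $(f_n)$ is of the form $\iota(\phi_0)$ with $\phi_0\in Y^{**}=L^\infty(\mu)^*$. The Yosida--Hewitt decomposition gives $\phi_0 = u+\sigma$ with $u\in L^1(\mu)$ and $\sigma$ purely finitely additive, and $\|\phi_0\|_{Y^{**}}=\|u\|_1+\|\sigma\|$ exhibits $L^1(\mu)$ as an $L$-summand of its bidual. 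I claim $\dd(\phi,X)\geq\eta$ for every such $\phi$, which yields $\wck[X]{A}\geq\dd(\clu{X}{f_n},X)\geq\eta$.

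To prove the claim I first show $\|\sigma\|\geq\eta$. The spike testers $h_n := \operatorname{sign}(f_n)\chi_{E_n}\in B_{L^\infty(\mu)}$ satisfy the diagonal estimate $\int f_n h_n\di\mu = \int_{E_n}|f_n|\di\mu\geq\eta$, while $\int u h_n\di\mu\to 0$ for every fixed $u\in L^1(\mu)$ since $\mu(E_n)\to 0$; thinning so $\mu(E_n)<2^{-n}$ and isolating one-signed spikes, a weak$^*$ compactness argument in $B_{L^\infty(\mu)}$ combined with a diagonal ultrafilter extraction produces, for any prescribed $u\in L^1(\mu)$, a fixed $g\in B_{L^\infty(\mu)}$ with $|\phi_0(g)-\int ug\di\mu|\geq\eta-\varepsilon$, giving $\|\sigma\|=\dd(\phi_0,L^1(\mu))\geq\eta$. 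For the second ingredient, writing $\iota(\phi_0)-x=\iota(\sigma)+(u-x)$ with $u-x\in X$ reduces the problem to $\dd(\iota(\sigma),X)_{X^{**}}\geq\|\sigma\|$, which is extracted from the $L$-embedded structure of $L^1(\mu)$: Hahn--Banach extensions $\tilde{h}_n\in B_{X^*}$ of the spike functionals provide, via weak$^*$ compactness of $B_{X^{***}}$, a functional $\Psi\in B_{X^{***}}$ annihilating $X$ (automatic on $Y$ since $h_n\to 0$ in $\sigma(L^\infty,L^1)$, and arranged on $X\setminus Y$ via the freedom in extensions together with the shrinking supports $E_n$) with $\Psi(\iota(\sigma))\geq\|\sigma\|-\varepsilon$. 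The main obstacle is the transition between the diagonal estimate $\int f_n h_n\di\mu\geq\eta$ and the off-diagonal definition $\phi_0(g)=\lim_{\mathcal{U},m}\int f_m g\di\mu$ of weak$^*$ cluster points, which must hold uniformly in the ultrafilter $\mathcal{U}$ so that \emph{every} cluster point inherits the singular mass; this is handled by subsequence thinning forcing $\int f_m h_n\di\mu\to 0$ for each fixed $m$, combined with the weak$^*$ compactness and Hahn--Banach arguments above.
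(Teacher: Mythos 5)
Your overall architecture --- the chain $\omega(A)\le\eta(A)\le\wck{A}$ combined with \eqref{eq:wcomp1} and \eqref{eq:wcomp2} --- and your truncation argument for $\omega(A)\le\eta(A)$ coincide with the paper's; likewise the identification of weak$^*$ cluster points of $(f_n)$ with elements of $Y^{\perp\perp}\cong L^\infty(\mu)^*$ and the reduction of $\dd(\phi,X)$ to $\dd(\iota(\sigma),X)$ are sound. The decisive lower bound, however, has two genuine gaps exactly where you flag the difficulties. First, the diagonal problem is not resolved by your sketch: your thinning condition ``$\int f_m h_n\di\mu\to0$ for each fixed $m$'' controls the limit in $n$ for fixed $m$, whereas $\phi_0(h_n)=\lim_{\mathcal U,m}\int f_m h_n\di\mu$ needs control in $m$ for fixed $n$, and the diagonal terms $\int f_n h_n\di\mu\ge\eta$ are invisible to any free ultrafilter limit in $m$. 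Worse, if $g$ is produced as a weak$^*$ cluster point of $(h_n)$ in $B_{L^\infty(\mu)}$, then for each fixed $m$ the value $\int f_m g\di\mu$ is a limit over $n$ of $\int f_m h_n\di\mu$, which your own thinning sends to $0$; hence $\phi_0(g)=0$ and this $g$ witnesses nothing. A working test object has to be assembled as a sum of nearly disjoint signed spikes, which requires an inductive extraction making each previously chosen $f_j$ uniformly integrable relative to all later spike sets and controlling sign interference on overlaps --- this is the content of the paper's subsequence splitting claim (conditions (i)--(iii) and the decomposition $f_k=u_k+v_k+w_k$ with $(w_k)$ an $\ell_1$-basis with constant $d-3\varepsilon$).

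Second, and independently, the proposition is about $\wck[X]{A}$ for an \emph{arbitrary} isometric superspace $X\supset L^1(\mu)$, so you must bound $\dd(\iota(\sigma),X)$ in $X^{**}$, not $\dd(\sigma,Y)$ in $Y^{**}$; the $L$-embedding gives only the latter. Your proposed fix --- Hahn--Banach extensions $\tilde h_n\in B_{X^*}$ of the spike functionals whose weak$^*$ cluster point annihilates $X$ --- cannot be carried out in general: a bounded sequence in $Y^*$ that is weak$^*$ null on $Y$ need not admit bounded extensions weak$^*$ null on $X$. (Take $Y=c_0\subset X=\ell_\infty$ and the coordinate functionals $e_n^*$: any bounded weak$^*$ null sequence in $\ell_\infty^*$ is weakly null by Grothendieck's property of $\ell_\infty$, so its restriction to $c_0$ would be norm null in $\ell_1$ by the Schur property, while $\|e_n^*\|=1$.) ``Shrinking supports'' constrains $h_n$ inside $L^\infty(\mu)$ but gives no handle on how an extension acts on $X\setminus Y$. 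This is precisely what the $\ell_1$-basis formulation buys: the estimate $\|\sum_j\alpha_jw_j\|\ge(d-3\varepsilon)\sum_j|\alpha_j|$ is a statement about norms, hence persists in every isometric superspace, and \cite[Lemma 5]{wesecom} then bounds the distance from every weak$^*$ cluster point of $(w_j)$ to the whole of $X$. Without substitutes for these two steps your argument does not close.
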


\begin{proof} Let $A\subset Y$ be a bounded set. Without loss of generality suppose $A\subset B_Y$. By \eqref{eq:diagrammsets} we have
$$\wck{A}\le\wk{A}\le\omega(A).$$
Further, since $\mu$ is finite, the set $B=B_{L^{\infty}(\mu)}\subset Y$ is a weakly compact subset of $X$.
Thus 
$$\omega(A)\le \inf_{c>0}\dh(A,cB).$$
It is easy to check that  
$$\dd(f,cB)=\int(|f|-c)^+\di\mu$$
for each $c>0$ and $f\in Y$. Indeed, let $f\in Y$ be arbitrary. If $g\in cB$ is arbitrary, then
$|f-g|\ge(|f|-c)^+$ almost everywhere, which yields the inequality ``$\geq$''. The converse inequality follows from the fact that the function
$$g(t)=\begin{cases} f(t) &\mbox{ if }|f(t)|\le c,\\ c\frac{f(t)}{|f(t)|}& \mbox{ if }|f(t)|>c\end{cases}$$
belongs to $cB$ and $\int|f-g|\di\mu=\int(|f|-c)^+\di\mu$. Therefore the last quantity of \eqref{eq:L1-kon-vse} is equal to $\inf_{c>0}\dh(A,cB)$. It follows that to prove \eqref{eq:L1-kon-vse}
it is enough to show that
\begin{equation}
		\label{eq:L1-1} \wck{A}\ge \inf_{c>0}\dh(A,cB).
\end{equation}

Denote the right-hand side by $d$. If $d=0$, the inequality is obvious. So suppose that $d>0$ and fix any $\varepsilon\in(0,\frac d5)$. To finish the proof we will use the following claim which is a variant of Rosenthal's subsequence splitting lemma.

\begin{claim} There are sequences $(f_k)$, $(u_k)$, $(v_k)$ and $(w_k)$ in $Y$ satisfying the following conditions.
\begin{itemize}
	\item[(a)] $f_k\in A$ and $f_k=u_k+v_k+w_k$ for $k\in\en$.
	\item[(b)] The sequence $(u_k)$ is weakly convergent.
	\item[(c)] $\|v_k\|\le 2\varepsilon$ for $k\in\en$.
	\item[(d)] $\|\sum_{j=1}^n \alpha_j w_j\|\ge (d-3\varepsilon)\sum_{j=1}^n|\alpha_j|$ whenever $n\in\en$ and $\alpha_1,\dots,\alpha_k$ are scalars.
\end{itemize}
\end{claim}

Let us first show how the proof can be finished using this claim. The claim itself will be proved
afterwards. So suppose that we have such sequences $(f_k)$, $(u_k)$, $(v_k)$ and $(w_k)$.

Take $f^{**}$ to be any weak$^*$ cluster point of the sequence $(f_{k})$. Let $(f_\tau)$ be a subnet of the sequence $(f_{k})$ which weak$^*$ converges to $f^{**}$ and $(w_\tau)$ be the corresponding subnet of the sequence $(w_{k})$. Denote the weak limit of $(u_k)$ by $u$. Take a weak$^*$ convergent subnet $(w_\nu)$ of $(w_\tau)$ and denote the weak$^*$ limit by $w^{**}$. Then $w^{**}$ is a weak$^*$ cluster point of $(w_k)$, thus $\dd(w^{**},X)\ge d-3\varepsilon$ by (d) and \cite[Lemma 5]{wesecom}. Further, $f^{**}-w^{**}-u$ is a weak$^*$ cluster point of $(v_k)$, hence   $\|f^{**}-w^{**}-u\|\le2\varepsilon$ by (c).
It follows that
$$\dd(f^{**},X)=\dd(f^{**}-u,X)\ge \dd(w^{**},X)-\|f^{**}-w^{**}-u\|\ge d-5\varepsilon.$$
So,
$$\dd(\clu{X}{f_{k_n}},X)\ge d-5\varepsilon,$$
hence
$\wck{A}\ge d-5\varepsilon$. Since $\varepsilon\in(0,\frac d5)$ is arbitrary, $\wck{A}\ge d$. This completes the proof.

It remains to prove the claim. Fix $c_1>0$ such that $\dh(A,c_1 B)<d+\varepsilon$. We will construct by induction functions $f_k\in A$ and numbers $c_k>0$ for $k\in\en$
such that $c_1$ is the number chosen above and the following conditions are satisfied:
\begin{itemize}
	\item[(i)] $\dd(f_k,c_k B)>\dh(A,c_k B)-\varepsilon$,
	\item[(ii)] $c_{k+1}>c_k$,
	\item[(iii)] $\int_E |f_j|<\frac\varepsilon{2^k}$ for $j=1,\dots,k$, whenever  $\mu(E)\le\frac1{c_{k+1}}$.
\end{itemize}
It is obvious that the inductive construction can be performed. For each $k\in\en$ set
$E_k=\{t:|f_k(t)|>c_k\}$ and define the functions $u_k$, $v_k$ and $w_k$ as follows:
\begin{itemize}
\item If $|f_k(t)|\le c_1$ then $u_k(t)=f_k(t)$, $v_k(t)=0$, $w_k(t)=0$.
\item If $|f_k(t)|\in(c_1,c_k]$ then 
\[
u_k(t)= \frac{c_1}{|f_k(t)|}f_k(t),\ v_k(t)=\left(1-\frac{c_1}{|f_k(t)|}\right)f_k(t),\ w_k(t)=0.
\]
\item If $|f_k(t)|> c_k$ then 
\[
u_k(t)= \frac{c_1}{|f_k(t)|}f_k(t),\ v_k(t)=\frac{c_k-c_1}{|f_k(t)|}f_k(t),\ w_k(t)=\left(1-\frac{c_k}{|f_k(t)|}\right)f_k(t).
\]
\end{itemize}

Then $f_k=u_k+v_k+w_k$ for each $k\in\en$. It proves the condition (a).  Further,
since $|v_k(t)+w_k(t)|=|v_k(t)|+|w_k(t)|$ for each $t$, we get $\|v_k+w_k\|=\|v_k\|+\|w_k\|$. So,
$$\begin{aligned}
\|v_k\|&=\|v_k+w_k\|-\|w_k\|=\dd(f_k,c_1 B)-\dd(f_k,c_k B)\\
&\le \dh(A,c_1 B)-\dh(A,c_k B)+\varepsilon \le d+\varepsilon -d +\varepsilon=2\varepsilon,
\end{aligned}$$
which proves (c).

We continue by showing (d). 	
So, fix $n\in\en$ and scalars $\alpha_1,\dots,\alpha_n$. Using the triangle inequality and the fact that
$w_k=0$ outside $E_k$ we get
$$\begin{aligned}\left\|\sum_{k=1}^n \alpha_k w_k\right\|& = \int \left|\sum_{k=1}^n \alpha_k w_k\right|\di\mu
\ge\sum_{j=1}^n\int_{E_j\setminus\bigcup_{j<i\le n} E_i}\left|\sum_{k=1}^n \alpha_k w_k\right|\di\mu\\
&=\sum_{j=1}^n\int_{E_j\setminus\bigcup_{j<i\le n} E_i}\left|\sum_{k=1}^j \alpha_k w_k\right|\di\mu\\
&\ge\sum_{j=1}^n\left(|\alpha_j|\int_{E_j\setminus\bigcup_{j<i\le n} E_i}|w_j|
-\sum_{k<j} |\alpha_k|\int_{E_j\setminus\bigcup_{j<i\le n} E_i}|w_k|\right)
\\&\ge\sum_{j=1}^n\left(|\alpha_j|\left(\int_{E_j}|w_j|\di\mu-\sum_{i=j+1}^n\int_{E_i}|w_j|\di\mu\right)
-\sum_{k<j} |\alpha_k|\int_{E_j}|w_k|\right).
\end{aligned}$$
Note that $\int_{E_j}|w_j|=\dh(f_j,c_j B)\ge d-\varepsilon$. Further,
it follows from the Chebyshev inequality that $\mu(E_k)\le\frac1{c_k}$ for each $k\in\en$ (recall that $A\subset B_Y$), so using the above condition (iii) we may continue:
$$\begin{aligned}\left\|\sum_{k=1}^n \alpha_k w_k\right\|&\ge
\sum_{j=1}^n\left(|\alpha_j|\left(d-\varepsilon-\sum_{i=j+1}^n\frac{\varepsilon}{2^i}\right)
-\sum_{k<j}|\alpha_k|\frac{\varepsilon}{2^j}\right)\\
&\ge (d-2\varepsilon)\sum_{j=1}^n|\alpha_j|-\sum_{j=1}^n\sum_{k=1}^n|\alpha_k|\frac{\varepsilon}{2^j}
\ge (d-3\varepsilon)\sum_{j=1}^n|\alpha_j|.
\end{aligned}$$

Finally, the sequence $(u_k)$ is contained in $c_1 B$ and hence it is relatively weakly compact. Therefore we can without loss of generality (up to extracting a subsequence) suppose that it weakly converges. This shows (b) and the proof is complete.
\end{proof}

In the rest of this section we will often deal with $\ell_1$-sums of Banach spaces. So, let us fix some notation. Let $X=\left(\bigoplus_{\gamma\in\Gamma} X_\gamma\right)_{\ell_1}$, where $X_\gamma$ is a Banach space for each $\gamma\in\Gamma$.

If $\gamma\in\Gamma$ is arbitrary, $P_\gamma$ denotes the canonical projection of $X$ onto $X_\gamma$. Further, if $F\subset \Gamma$ is arbitrary, $P_F$ denotes the canonical projection of $X$ onto
$\left(\bigoplus_{\gamma\in F} X_\gamma\right)_{\ell_1}$. If $F=\emptyset$, we set $P_\emptyset$ to be the projection onto $\{0\}$.

The spaces $X_\gamma$, $\gamma\in\Gamma$,
and $\left(\bigoplus_{\gamma\in F} X_\gamma\right)_{\ell_1}$, $F\subset\Gamma$, are considered canonically embedded into $X$ (other coordinates are set to be zero).

\begin{lemma}\label{lm-ell1sum} Let $X_\gamma$, $\gamma\in\Gamma$, be a family of Banach spaces and let $X=\left(\bigoplus_{\gamma\in\Gamma} X_\gamma\right)_{\ell_1}$. 
Let $A\subset X$ be a bounded set. Then the following hold:
\begin{itemize}
\item[(i)] $\wck{A}\ge \inf\{\varepsilon>0: (\exists F\subset\Gamma\mbox{ finite})(\forall x\in A)(\|P_{\Gamma\setminus F}x\|<\varepsilon)\}$.
	\item[(ii)] If $A$ is weakly compact, then for each $\varepsilon>0$ there is a finite set $F\subset \Gamma$
such that $\|P_{\Gamma\setminus F}x\|<\varepsilon$ for each $x\in A$. In particular,
 the set $C=\{\gamma\in\Gamma: P_\gamma|_A\ne 0\}$ is countable.
 \item[(iii)] If, moreover, each $X_\gamma$ is reflexive, then
 \begin{multline*}\omega(A)=\wk{A}=\wck{A}\\
 =\inf\{\varepsilon>0: (\exists F\subset\Gamma\mbox{ finite})(\forall x\in A)(\|P_{\Gamma\setminus F}x\|<\varepsilon)\}.\end{multline*}
\end{itemize}
\end{lemma}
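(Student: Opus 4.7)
I treat the three parts in order; (i) contains the main argument and (ii), (iii) follow easily. The strategy for (i) mirrors the concluding claim in the proof of Proposition~\ref{L1-konecne}, transposed from $L^1(\mu)$ to the $\ell_1$-sum setting. Let $d$ denote the infimum on the right-hand side of (i) and fix $\varepsilon\in(0,d)$. Using the definition of $d$ and the $\ell_1$-summability of the coordinates of each element of $X$, I would inductively construct a sequence $(x_n)\subset A$ together with an increasing chain $\emptyset=F_0\subsetneq F_1\subsetneq\cdots\subset\Gamma$ of finite sets such that, writing $E_n:=F_n\setminus F_{n-1}$, one has $\|P_{E_n}x_n\|\ge\varepsilon-2^{-n}$ and $\|P_{\Gamma\setminus F_n}x_n\|<2^{-n}$. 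Hahn--Banach in the finite block $\bigoplus_{\gamma\in E_n}X_\gamma$ furnishes $\phi_n\in B_{X^*}$ supported on $E_n$ with $\phi_n(x_n)=\|P_{E_n}x_n\|$; disjointness of supports places every ``tail functional'' $\Phi_N:=\sum_{n\ge N}\phi_n$ in $B_{X^*}$.

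By repeated diagonal extractions (the scalars $\phi_n(x_k)$ being bounded in $n$ and $k$, so Bolzano--Weierstrass applies) I pass to a subsequence, still $(x_n)$, along which $\phi_n(x_k)\to\beta_n$ as $k\to\infty$ for each fixed $n$, $|\phi_n(x_k)-\beta_n|<2^{-n-k}$ for every $n<k$, and $\|P_{E_n}x_n\|\to\alpha$ for some $\alpha\ge\varepsilon$; an $\ell_1$-Fatou argument on $\sum_n\|P_{E_n}x_k\|\le\|x_k\|$ gives $\sum_n|\beta_n|<\infty$. For $z\in X$ and $\eta>0$, absolute convergence $\sum_n|\beta_n-\phi_n(z)|\le\sum_n|\beta_n|+\sum_n\|P_{E_n}z\|<\infty$ lets me select $N$ with $\sum_{n\ge N}|\beta_n-\phi_n(z)|<\eta$. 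Splitting $\Phi_N(x_n)$ into its diagonal, past ($N\le m<n$), and future ($m>n$) contributions and using the three estimates above, one obtains $\Phi_N(x_n)\to\alpha+\sum_{n\ge N}\beta_n$ as $n\to\infty$, hence $\Phi_N(x_n-z)\to\alpha+\sum_{n\ge N}(\beta_n-\phi_n(z))\ge\varepsilon-\eta$. Therefore, for any weak$^{*}$ cluster point $x^{**}\in X^{**}$ of $(x_n)$ obtained via a subnet whose indices tend to infinity, $\Phi_N(x^{**}-z)\ge\varepsilon-\eta$; since $\|\Phi_N\|_{X^*}\le 1$, $\|x^{**}-z\|\ge\varepsilon-\eta$, and letting $\eta\downarrow 0$ and varying $z\in X$ yields $\dd(x^{**},X)\ge\varepsilon$. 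This gives $\wck{A}\ge\varepsilon$ for each $\varepsilon<d$, hence $\wck{A}\ge d$.

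Assertion (ii) is immediate from (i): if $A$ is weakly compact then $\wck{A}=0$, so the infimum in (i) is zero, and for each $n\in\en$ a finite set $F_n\subset\Gamma$ with $\sup_{x\in A}\|P_{\Gamma\setminus F_n}x\|<1/n$ exists; any $\gamma\notin\bigcup_n F_n$ then satisfies $\|P_\gamma x\|<1/n$ for every $n$ and every $x\in A$, so $P_\gamma|_A=0$, and $\{\gamma:P_\gamma|_A\ne 0\}\subset\bigcup_n F_n$ is countable. For (iii) I add the reverse inequality $\omega(A)\le d$: given $\varepsilon>d$ and a corresponding finite $F\subset\Gamma$, the space $Y_F:=(\bigoplus_{\gamma\in F}X_\gamma)_{\ell_1}$ is reflexive as a finite $\ell_1$-sum of reflexive spaces and embeds as a closed subspace of $X$, so $\overline{P_F(A)}^w$ is weakly compact in $X$; since $\|x-P_Fx\|=\|P_{\Gamma\setminus F}x\|<\varepsilon$ for every $x\in A$, $\omega(A)\le\varepsilon$. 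Combined with (i) and the chain $\wck{A}\le\wk{A}\le\omega(A)$ from~\eqref{eq:diagrammsets}, this closes the four-way equality $\wck{A}=\wk{A}=\omega(A)=d$.

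The principal technical obstacle I foresee lies in the bookkeeping of the diagonal extractions for (i): without reflexivity of the $X_\gamma$ one cannot extract strongly convergent subsequences from $(P_\gamma x_n)$, so the auxiliary scalars $\beta_n$ (morally the coordinates of an element $\tilde y\in(\bigoplus_\gamma X_\gamma^{**})_{\ell_1}$) must be controlled only through scalar limits and the refinement $|\phi_n(x_k)-\beta_n|<2^{-n-k}$, and the ``past-pollution'' sum $\sum_{N\le m<k}\phi_m(x_k)$ must be shown to converge rapidly enough to $\sum_{m\ge N}\beta_m$. This is the delicate substitute for the cleaner Rosenthal-style splitting available in $L^1(\mu)$ in Proposition~\ref{L1-konecne}.
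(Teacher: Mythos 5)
Your proof is correct, and all three parts follow the same overall strategy as the paper: a gliding-hump construction of $x_n\in A$ and an increasing chain of finite sets $F_0\subset F_1\subset\cdots$ so that each $x_n$ carries mass at least $\varepsilon$ (up to a summable error) on the disjoint block $E_n=F_n\setminus F_{n-1}$, plus the identical easy deductions for (ii) and (iii). Where you genuinely diverge is in how (i) is finished. The paper shows that the tails $P_{\Gamma\setminus F_0}x_n$ satisfy a lower $\ell_1$-estimate with constant $\theta-4\eta$ and then quotes \cite[Lemma 5]{wesecom} (a weak$^*$ cluster point of a sequence with lower $\ell_1$-estimate $c$ lies at distance at least $c$ from the space), combined with the decomposition $X^{**}=P_{F_0}^{**}X^{**}\oplus_1 P_{\Gamma\setminus F_0}^{**}X^{**}$, to transfer the estimate to cluster points of $(x_n)$ itself. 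You instead assemble the disjointly supported norming functionals $\phi_n$ into a single $\Phi_N=\sum_{n\ge N}\phi_n\in B_{X^*}$ and compute $\lim_k\Phi_N(x_k-z)$ directly for each $z\in X$. This buys a self-contained argument that never needs the lower $\ell_1$-estimate or the external lemma, at the price of the extra diagonal extraction: the quantitative rate $|\phi_n(x_k)-\beta_n|<2^{-n-k}$ is genuinely needed, since mere pointwise convergence of $\phi_n(x_k)$ together with the uniform $\ell_1$ bound would not control the ``past'' sum $\sum_{N\le n<k}\phi_n(x_k)$ (mass could escape to infinity), and you correctly identify and handle this. Two cosmetic points: after passing to a subsequence one must retain only the functionals paired with the surviving indices so that the diagonal/past/future splitting of $\Phi_N(x_k)$ still makes sense (the future estimate survives because $E_{k_{j'}}\subset\Gamma\setminus F_{k_j}$ for $k_{j'}>k_j$); and for complex scalars the final bound should be read in modulus, $\bigl|\alpha+\sum_{n\ge N}(\beta_n-\phi_n(z))\bigr|\ge\alpha-\eta\ge\varepsilon-\eta$, which is what your conclusion actually uses.
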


\begin{proof} (i) Let $\theta$ denote the right-hand side. The infimum is well defined as $A$ is bounded. If $\theta=0$, the inequality is obvious. So, suppose that $\theta>0$. Fix an arbitrary $\eta\in(0,\frac\theta4)$. As $\theta+\eta>\theta$, there is a finite set $F_0\subset\Gamma$ such that $\|P_{\Gamma\setminus F_0} x\|<\theta+\eta$ for each $x\in A$. We will use the following claim.

\begin{claim} There is a sequence $(x_k)$ in $A$ such that 
$$\left\|\sum_{i=1}^n\lambda_i P_{\Gamma\setminus F_0} x_i\right\|
\ge(\theta-4\eta)\sum_{j=1}^n|\lambda_j|$$
whenever $n\in\en$ and $\lambda_1,\dots,\lambda_n$ are scalars.
\end{claim}

Let us show how to conclude the proof using this claim. Let $(x_k)$ be the sequence provided by the claim. Let $x^{**}$ be any weak$^*$ cluster point of $(x_k)$ in $X^{**}$. Since $X=P_{F_0}X\oplus_1 P_{\Gamma\setminus F_0}X$ we get
$$X^{**}=(P_{F_0}X)^{**}\oplus_1 (P_{\Gamma\setminus F_0}X)^{**}
=P_{F_0}^{**}X^{**}\oplus_1 P_{\Gamma\setminus F_0}^{**}X^{**},$$
so $y^{**}=P_{\Gamma\setminus F_0}^{**}x^{**}$ is a weak$^*$ cluster point of $(P_{\Gamma\setminus F_0}x_k)$, thus
$\dd(y^{**},X)\ge\theta-4\eta$ by \cite[Lemma 5]{wesecom}. Further, clearly $\dd(x^{**},X)\ge\dd(y^{**},X)$, thus
$$\dd(\clu{X}{x_k},X)\ge\theta-4\eta,$$
in particular, $\wck{A}\ge\theta-4\eta$. As $\eta>0$ is arbitrary, we get $\wck{A}\ge\theta$ which was to be proven.

It remains to prove the claim. We will construct by induction elements $x_k\in A$ and finite sets 
$F_k\subset \Gamma$ for $k\in\en$ such that 
\begin{itemize}
	\item $\|P_{\Gamma\setminus F_{k-1}} x_k\|> \theta-\eta$,
	\item $F_k\supset F_{k-1}$,
	\item $\|P_{\Gamma\setminus F_k} x_i\|<\eta$ for $i\le k$,
	\item $\|P_{F_k\setminus F_{k-1}} x_k\|>\theta-\eta$.
\end{itemize}
The construction is easy: Recall that we have the set $F_0$. Given $F_{k-1}$, we can find $x_k$ fulfilling the first condition as $\theta-\eta<\theta$. Further, we can find a finite set $F_{k}$ satisfying the other three conditions using the properties of the $\ell_1$-sum. 

Let us show that this sequence $(x_k)$ has the required property.
%Let $y_k=P_{\Gamma\setminus F_1} x_k$. 
Let $n\ge 1$ be arbitrary and $\lambda_1,\dots,\lambda_n$ be arbitrary scalars. Then
$$\begin{aligned}
\left\|\sum_{i=1}^n\lambda_i P_{\Gamma\setminus F_0} x_i\right\|
&\ge \sum_{j=1}^n\left\|P_{F_{j}\setminus F_{j-1}}\left(\sum_{i=1}^n\lambda_i x_i\right)\right\|\\
&\ge \left.\sum_{j=1}^n\right(|\lambda_j|\|P_{F_{j}\setminus F_{j-1}}(x_j)\|
\\&\left.\qquad\qquad
-\sum_{i=1}^{j-1}|\lambda_i|\|P_{F_{j}\setminus F_{j-1}}(x_i)\|
-\sum_{i=j+1}^{n}|\lambda_i|\|P_{F_{j}\setminus F_{j-1}}(x_i)\|\right)
\\&=\sum_{j=1}^n|\lambda_j|\|P_{F_{j}\setminus F_{j-1}}(x_j)\| 
\\&\qquad\qquad
-\sum_{i=1}^n \sum_{j=i+1}^n |\lambda_i|\|P_{F_j \setminus F_{j-1}}x_i\|
-\sum_{i=1}^n \sum_{j=1}^{i-1}|\lambda_i|\|P_{F_j \setminus F_{j-1}}\|
\\&= \sum_{j=1}^n|\lambda_j|\|P_{F_{j}\setminus F_{j-1}}(x_j)\|
\\&\qquad\qquad
-\sum_{i=1}^n |\lambda_i|\left(\|P_{\Gamma \setminus F_0}x_i\|-\|P_{\Gamma\setminus F_{i-1}}x_i\|+\|P_{F_n\setminus F_i}x_i\|\right)
\\& \ge (\theta-\eta)\sum_{j=1}^n|\lambda_j| - (\theta+\eta-(\theta-\eta)+\eta)\sum_{i=1}^n|\lambda_i|
\\&=(\theta-4\eta)\sum_{j=1}^n|\lambda_j|.
\end{aligned}$$
This completes the proof of the claim and hence also (i) is proved.

(ii) The first assertion follows easily from (i). Indeed, if $A$ is weakly compact, then $\wck{A}=0$ and so the infimum is zero as well. To show the second assertion choose $F_n\subset\Gamma$  a finite set corresponding to $\varepsilon=\frac1n$. Then $C\subset\bigcup_{n\in\en} F_n$, hence it is countable.

(iii) Denote the last quantity by $\theta$. Due to (i) and \eqref{eq:diagrammsets} it is enough to prove $\omega(A)\le \theta$. Let $\varepsilon>0$ be arbitrary. Then there is a finite set $F\subset\Gamma$ such that $\|P_{\Gamma\setminus F}x\|<\theta+\varepsilon$ for each $x\in A$. Set $A_F=P_F(A)$. Then $A_F$ is a bounded subset of the reflexive space $P_F(X)$, hence it is relatively weakly compact. Therefore, $\omega(A)\le\dh(A,A_F)\le\theta+\varepsilon$ since, for any $x\in A$,
$$\dd(x,A_F)\le\|x-P_Fx\|=\|P_{\Gamma\setminus F}x\|<\theta+\varepsilon.$$
Since $\varepsilon>0$ is arbitrary, we get the sought inequality $\omega(A)\le\theta$.
\end{proof}

As an immediate consequence of Lemma~\ref{lm-ell1sum}(iii) we get the following proposition.

\begin{prop}\label{ell1Gamma} Let $X=\ell_1(\Gamma)$ for an arbitrary set $\Gamma$ and $A\subset X$ be a bounded set. Then 
$$
\chi(A)=\omega(A)=\wk{A}=\wck{A} = \inf\left\{ \sup_{x\in A} \sum_{\gamma\in\Gamma\setminus F} |x_\gamma|: F\subset\Gamma\mbox{ finite}\right\}.$$
\end{prop}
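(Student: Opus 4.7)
The plan is to apply Lemma~\ref{lm-ell1sum}(iii) with $X_\gamma$ equal to the scalar field for every $\gamma\in\Gamma$, so that $X=\left(\bigoplus_{\gamma\in\Gamma} X_\gamma\right)_{\ell_1}=\ell_1(\Gamma)$. Since each $X_\gamma$ is one-dimensional, hence reflexive, the lemma immediately yields
\[
\omega(A)=\wk{A}=\wck{A}=\inf\{\varepsilon>0: (\exists F\subset\Gamma\mbox{ finite})(\forall x\in A)(\|P_{\Gamma\setminus F}x\|<\varepsilon)\}.
\]
In $\ell_1(\Gamma)$ we have $\|P_{\Gamma\setminus F}x\|=\sum_{\gamma\in\Gamma\setminus F}|x_\gamma|$, so the right-hand side coincides with the infimum $\theta$ appearing in the statement.

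Thus it only remains to check that $\chi(A)=\theta$ as well. The inequality $\omega(A)\le\chi(A)$ is part of \eqref{eq:diagrammsets}, so the conclusion will follow once we show $\chi(A)\le\theta$. Given $\varepsilon>0$, pick a finite set $F\subset\Gamma$ with $\sup_{x\in A}\|P_{\Gamma\setminus F}x\|<\theta+\varepsilon$. The set $P_F(A)$ is bounded in the finite-dimensional subspace $P_F(X)\subset X$, hence relatively norm compact, and for every $x\in A$,
\[
\dd(x,P_F(A))\le\|x-P_Fx\|=\|P_{\Gamma\setminus F}x\|<\theta+\varepsilon.
\]
Consequently $\chi(A)\le\dh(A,P_F(A))\le\theta+\varepsilon$, and letting $\varepsilon\to 0$ gives $\chi(A)\le\theta$.

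Since the statement is advertised as an immediate consequence of Lemma~\ref{lm-ell1sum}(iii), there is no serious obstacle: the only content beyond the lemma is the passage from $\omega(A)$ to $\chi(A)$, which is handled by the finite-dimensionality of $P_F(X)$. The identification of $\|P_{\Gamma\setminus F}x\|$ with a tail sum is specific to the $\ell_1$-norm and is what makes the scalar $\ell_1(\Gamma)$ case cleaner than the general reflexive-summand situation.
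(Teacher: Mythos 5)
Your proof is correct and follows exactly the route the paper intends: the paper derives this proposition as an immediate consequence of Lemma~\ref{lm-ell1sum}(iii) applied to scalar summands, and your additional step identifying $\chi(A)$ with the common value via the finite-dimensionality (hence norm compactness) of $P_F(A)$ is the right way to supply the one piece the lemma does not literally state.
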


The following two lemmata extend Proposition~\ref{L1-konecne} for an arbitrary measure $\mu$. In the first one
we prove a formula for $\omega(A)$.

\begin{lemma}\label{L1-omega} Let $X=L^1(\mu)$, where $\mu$ is an arbitrary non-negative $\sigma$-additive measure and $A\subset X$ be a bounded set. Then
$$\omega(A)=\inf\left\{ \sup_{f\in A}\int(|f|-c\bchi_E)^+ \di\mu: c>0, \mu(E)<\infty\right\}.$$
\end{lemma}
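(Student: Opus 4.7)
The plan is to prove both inequalities between $\omega(A)$ and the right-hand side of the formula, which I denote $\theta(A)$.

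For the bound $\omega(A) \leq \theta(A)$, I would fix $c > 0$ and a measurable set $E$ with $\mu(E) < \infty$ and consider $K_{c,E} = \{g \in L^1(\mu) : |g| \leq c\bchi_E\}$. This set is weakly compact in $L^1(\mu)$: it sits inside $c$ times the unit ball of $L^\infty(\mu|_E)$ viewed in $L^1(\mu|_E)$, which is weakly compact by the argument within the proof of Proposition~\ref{L1-konecne}, and the isometric inclusion $L^1(\mu|_E) \hookrightarrow L^1(\mu)$ preserves weak compactness. Exactly as in Proposition~\ref{L1-konecne}, the explicit truncation $g = f$ on $\{|f|\leq c\}\cap E$, $g = c f/|f|$ on $\{|f|>c\}\cap E$, $g = 0$ off $E$, together with the pointwise inequality $|f-g| \geq (|f| - c\bchi_E)^+$ valid for every $g \in K_{c,E}$, shows $\dd(f, K_{c,E}) = \int (|f| - c\bchi_E)^+ \di\mu$. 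Hence $\omega(A) \leq \dh(A, K_{c,E}) = \sup_{f \in A}\int(|f| - c\bchi_E)^+ \di\mu$, and taking the infimum over $(c,E)$ yields $\omega(A) \leq \theta(A)$.

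For the reverse inequality $\omega(A) \geq \theta(A)$, fix $\varepsilon > 0$ and choose a weakly compact set $L \subset L^1(\mu)$ with $\dh(A, L) < \omega(A) + \varepsilon$. The key tool is the Dunford--Pettis criterion for weak compactness in $L^1(\mu)$ in the following convenient form: for every weakly compact $L$ and every $\eta > 0$ there exist $c > 0$ and $E$ with $\mu(E) < \infty$ such that $\sup_{g \in L}\int_{X\setminus E}|g|\di\mu < \eta$ and $\sup_{g \in L}\int_{\{|g|>c\}}|g|\di\mu < \eta$. (Weakly compact subsets of $L^1(\mu)$ are automatically concentrated on a $\sigma$-finite part of the measure algebra, so the standard statement applies even when $\mu$ is not $\sigma$-finite.) Applying this with $\eta = \varepsilon$ and using the pointwise identity $(|g| - c\bchi_E)^+ = |g|\bchi_{X\setminus E} + (|g|-c)^+\bchi_E$ gives $\sup_{g \in L}\int(|g|-c\bchi_E)^+\di\mu < 2\varepsilon$.

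To finish, for each $f \in A$ pick $g \in L$ with $\|f-g\| < \omega(A) + \varepsilon$. The pointwise estimate $(|f| - c\bchi_E)^+ \leq |f-g| + (|g| - c\bchi_E)^+$, which follows from $(a+b)^+ \leq a^+ + b$ applied to $a = |g| - c\bchi_E$ and $b = |f-g| \geq 0$ combined with the triangle inequality $|f| \leq |f-g| + |g|$, yields after integration
\[
\int(|f| - c\bchi_E)^+ \di\mu \leq \|f-g\| + \int(|g| - c\bchi_E)^+ \di\mu < \omega(A) + 3\varepsilon.
\]
Taking the supremum over $f \in A$ gives $\theta(A) \leq \omega(A) + 3\varepsilon$, and sending $\varepsilon \to 0$ completes the argument. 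The only delicate point is the use of the Dunford--Pettis criterion in the non-$\sigma$-finite setting, but this is circumvented by the $\sigma$-finite concentration of any weakly compact subset of $L^1(\mu)$.
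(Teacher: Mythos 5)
Your proof of the inequality ``$\le$'' is the same as the paper's: the weakly compact truncation set $K_{c,E}=\{g:|g|\le c\bchi_E\}$ together with the exact distance formula $\dd(f,K_{c,E})=\int(|f|-c\bchi_E)^+\di\mu$. For the inequality ``$\ge$'' you take a genuinely different route. The paper first reduces to a semifinite measure, writes $L^1(\mu)$ as the $\ell_1$-sum of spaces $L^1(\mu_\gamma)$ over mutually singular finite measures, extracts from Lemma~\ref{lm-ell1sum}(ii) a finite-measure set $E$ carrying the weakly compact set $K$ up to $\varepsilon$ in norm, and then applies Proposition~\ref{L1-konecne} to the finite measure $\mu|_E$ to produce the truncation level $c$; you instead invoke the general-measure Dunford--Pettis criterion (uniform integrability together with tightness on a set of finite measure) and finish with the same pointwise estimate $(|f|-c\bchi_E)^+\le|f-g|+(|g|-c\bchi_E)^+$. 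Your final computation and the identity $(|g|-c\bchi_E)^+=|g|\bchi_{X\setminus E}+(|g|-c)^+\bchi_E$ are correct, and the approach is shorter; what it buys you is avoiding the $\ell_1$-sum machinery, at the price of citing a stronger form of Dunford--Pettis than the paper's Lemma~\ref{lm:dp}, which is stated only for finite Radon measures (the paper needs Lemma~\ref{lm-ell1sum} anyway for Theorem~\ref{main-L1}, so for it the detour is free).

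The one point you should shore up is the parenthetical justification of tightness. Knowing that a weakly compact set is concentrated on a $\sigma$-finite part of the measure algebra does not by itself yield the uniform tail bound $\sup_{g\in L}\int_{X\setminus E}|g|\di\mu<\eta$ for a single set $E$ of finite measure: for Lebesgue measure on $[0,\infty)$ the sequence $(\bchi_{[n,n+1]})$ lives on a $\sigma$-finite set and is uniformly integrable in the $\delta$--$\varepsilon$ sense of Lemma~\ref{lm:dp}, yet is not tight (and indeed not relatively weakly compact). So the tightness of weakly compact subsets of $L^1$ of an infinite measure is genuine content, not a formal consequence of $\sigma$-finite concentration; it is true (it is the necessity half of the Dunford--Pettis theorem for general measure spaces, or follows from almost order boundedness of relatively weakly compact subsets of $L^1$), but it needs either a precise citation or a proof --- the paper supplies exactly this via the claim inside Lemma~\ref{lm-ell1sum}(i), where a failure of tightness produces a sequence equivalent to the $\ell_1$-basis.
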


\begin{proof} We start by proving the inequality `$\le$'. To do that we fix $c>0$ and a measurable set $E$ of finite measure. Let $K=\{g\in X: |g|\le c\bchi_E\ \mu\mbox{-a.e.}\}$. Then $K$ is weakly compact. Let $f\in X$ be arbitrary. Then clearly $\dd(f,K)=\int(|f|-c\bchi_E)^+ \di\mu$. Indeed, for each $g\in K$ we have $|f-g|\ge(|f|-c\bchi_E)^+$ $\mu$-a.e. and the function $g$ defined by
$$g(t)=\begin{cases} f(t) &\mbox{ if }|f(t)|\le c\bchi_{E}(t),\\ c\frac{f(t)}{|f(t)|}& \mbox{ if }|f(t)|>c\bchi_E(t)\end{cases}$$
belongs to $K$ and $\|f-g\|=\int(|f|-c\bchi_E)^+ \di\mu$.
It follows that $$\dh(A,K)=\sup_{f\in A}\int(|f|-c\bchi_E)^+ \di\mu$$ and the inequality `$\le$' is proved.

Before proving the converse inequality observe that without loss of generality we can suppose that $\mu$ is semifinite, i.e., for each measurable set $E$ with $\mu(E)>0$ there is a measurable set $E'\subset E$ with $0<\mu(E')<\infty$. Indeed, any $\mu$ can be canonically expressed as $\mu=\mu_1+\mu_2$ where $\mu_1$ is semifinite and $\mu_2$ takes only values $0$ and $\infty$ (see, e.g., \cite[Section 5]{valexa}). Moreover, this canonical decomposition fulfils the following property:
$$\forall E,\mu_1(E)<\infty \;\exists E'\subset E: \mu(E')=\mu_1(E')=\mu_1(E).$$
Then $L^1(\mu)$ is canonically isometric to $L^1(\mu_1)$ and the quantity on the right-hand side is the same for $\mu$ and $\mu_1$.

So, suppose that $\mu$ is semifinite. Let $(E_\gamma)_{\gamma\in\Gamma}$ be a maximal family of measurable sets satisfying the following conditions:
\begin{itemize}
	\item $0<\mu(E_\gamma)<\infty$ for each $\gamma\in\Gamma$,
	\item $\mu(E_\gamma\cap E_{\gamma'})=\emptyset$ for distinct $\gamma,\gamma'\in\Gamma$.
\end{itemize}
Let $\mu_\gamma$ be the restriction of $\mu$ to $E_\gamma$, i.e., $\mu_\gamma(E)=\mu(E\cap E_\gamma)$.
Then   $(\mu_\gamma)_{\gamma\in \Gamma}$ are mutually singular finite measures such that $\mu=\sum_{\gamma\in\Gamma}\mu_\gamma$. Then $L^1(\mu)$ is canonically isometric to the $\ell_1$-sum of the spaces $L^1(\mu_\gamma)$ for $\gamma\in\Gamma$ (cf. \cite[Proof of Theorem 5.1]{valexa}).

Now we are ready to  show the inequality `$\ge$'. Let $\varepsilon>0$ be arbitrary. Then there is a weakly compact set $K\subset X$ with $\dh(A,K)<\omega(A)+\varepsilon$. By Lemma~\ref{lm-ell1sum} there is $F\subset \Gamma$ finite such that
for each $f\in K$ we have
$$\int |f|(1-\bchi_{\bigcup_{\gamma\in F} E_\gamma})\di\mu<\varepsilon.$$  
Set $E_F=\bigcup_{\gamma\in F} E_\gamma$, $\mu_F=\sum_{\gamma\in F}\mu_\gamma$ and $K_F=\{f\bchi_{E_F}: f\in K\}$.
Then $K_F$ is weakly compact in $L^1(\mu_F)$. By \eqref{eq:L1-kon-vse} we obtain $c>0$ such that
$$\sup\left\{\int(|f|-c)^+\di\mu_F: f\in K_F\right\}<\varepsilon.$$

Fix $f\in A$ arbitrary. Then $\dd(f,K)<\omega(A)+\varepsilon$, so there is $g\in K$ with $\|f-g\|<\omega(A)+\varepsilon$.
Then
$$\begin{aligned}\int (|f|-c\bchi_{E_F})^+\di\mu&\le\int |f-g|\di\mu + \int (|g|-c\bchi_{E_F})^+\di\mu
\\&<\omega(A)+\varepsilon + \int_{E_F} (|g|-c)^+\di\mu_F + \int |g|(1-\bchi_{E_F})\di\mu
\\&<\omega(A)+3\varepsilon.\end{aligned}$$
Thus
$$\sup_{f\in A}\int (|f|-c\bchi_{E_F})^+\di\mu\le\omega(A)+3\varepsilon.$$
As $\varepsilon>0$ is arbitrary, we get the inequality `$\ge$'.
\end{proof}

The last result of this section finishes the extension of Proposition~\ref{L1-konecne} to arbitrary $\mu$.

\begin{thm}\label{main-L1}	Let $X=L^1(\mu)$, where $\mu$ is an arbitrary non-negative $\sigma$-additive measure and $A\subset X$ be a bounded set. Then $\omega(A)=\wk{A}=\wck{A}$.
\end{thm}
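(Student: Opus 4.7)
Only the inequality $\omega(A)\le\wck{A}$ needs proof; the reverse chain $\wck{A}\le\wk{A}\le\omega(A)$ holds by~\eqref{eq:diagrammsets}. As in the proof of Lemma~\ref{L1-omega}, I may assume $\mu$ is semifinite, and after normalization I may assume $A\subset B_{L^1(\mu)}$. Write $d=\omega(A)$; by Lemma~\ref{L1-omega},
\[
d=\inf_{c>0,\,\mu(E)<\infty}\sup_{f\in A}\int(|f|-c\bchi_E)^+\di\mu,
\]
so for every admissible pair $(c,E)$ the supremum on the right is $\ge d$. Fix $\varepsilon>0$ small. My goal is to build a sequence $(f_k)\subset A$ whose weak$^*$ cluster points in $L^1(\mu)^{**}$ all lie at distance at least $d-O(\varepsilon)$ from $L^1(\mu)$; letting $\varepsilon\to 0$ then gives $\wck{A}\ge d$.

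The construction follows the Rosenthal-type subsequence-splitting argument used in the proof of Proposition~\ref{L1-konecne}, with the extra flexibility needed to handle infinite $\mu$. Choose $c_1>0$ and a finite-measure set $E_1$ with $\sup_{f\in A}\int(|f|-c_1\bchi_{E_1})^+\di\mu<d+\varepsilon$. Inductively, given $c_k$ and $E_k$, pick $f_k\in A$ with $\int(|f_k|-c_k\bchi_{E_k})^+\di\mu>d-\varepsilon/2^k$, which is possible because the supremum is at least $d$. Then choose $c_{k+1}>c_k$ and a finite-measure set $E_{k+1}\supset E_k$ so that $\mu(E)\le 1/c_{k+1}$ forces $\int_E|f_j|\di\mu<\varepsilon/2^{k+1}$ and $\int_{E_{k+1}^c}|f_j|\di\mu<\varepsilon/2^{k+1}$ for every $j\le k$. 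The second condition is achievable because each $f_j\in L^1(\mu)$ has $\sigma$-finite support.

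Now decompose $f_k=u_k+v_k+w_k$ by layered truncation, in the spirit of Proposition~\ref{L1-konecne}: $u_k$ has modulus $\min(|f_k|,c_1)\bchi_{E_1}$ and the same sign as $f_k$, $w_k$ has modulus $(|f_k|-c_k\bchi_{E_k})^+$ and the same sign as $f_k$, and $v_k:=f_k-u_k-w_k$. Since $u_k$, $v_k$, $w_k$ share the sign of $f_k$, one has $\|f_k-u_k\|=\|v_k\|+\|w_k\|$, and hence
\[
\|v_k\|<(d+\varepsilon)-\bigl(d-\varepsilon/2^k\bigr)\le 2\varepsilon.
\]
Let $S_j$ be the support of $w_j$, which is contained in $(E_j\cap\{|f_j|>c_j\})\cup E_j^c$. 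The two inductive conditions on $(c_{k+1},E_{k+1})$ yield $\int_{S_i}|f_j|\di\mu<\varepsilon/2^{i-1}$ whenever $j<i$. Splitting the $L^1$-norm over the layers $S_j\setminus\bigcup_{i>j}S_i$ exactly as in Proposition~\ref{L1-konecne} then produces the lower $\ell_1$-estimate
\[
\Bigl\|\sum_{j=1}^n\alpha_jw_j\Bigr\|\ge(d-O(\varepsilon))\sum_{j=1}^n|\alpha_j|
\]
for all scalars $\alpha_1,\dots,\alpha_n$.

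Finally, since $(u_k)$ lies in the weakly compact set $\{g:|g|\le c_1\bchi_{E_1}\}$, a subsequence $(u_{k_n})$ converges weakly to some $u\in L^1(\mu)$. Let $f^{**}$ be any weak$^*$ cluster point of $(f_{k_n})$ in $L^1(\mu)^{**}$; passing to a subnet we may write $f^{**}=u+v^{**}+w^{**}$ where $\|v^{**}\|\le\liminf\|v_{k_n}\|\le 2\varepsilon$ and $w^{**}$ is a weak$^*$ cluster point of $(w_k)$. By the lower $\ell_1$-estimate and \cite[Lemma~5]{wesecom} we get $\dd(w^{**},L^1(\mu))\ge d-O(\varepsilon)$, so
\[
\dd(f^{**},L^1(\mu))=\dd(f^{**}-u,L^1(\mu))\ge\dd(w^{**},L^1(\mu))-\|v^{**}\|\ge d-O(\varepsilon),
\]
giving $\wck{A}\ge d-O(\varepsilon)$. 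The main obstacle is that in an infinite measure space mass can escape a weakly compact subset of $L^1(\mu)$ in two essentially different ways---by concentrating on sets of small measure and by spreading outside every finite-measure set---so the induction must simultaneously control \emph{both} the $1/c_{k+1}$-threshold and the $\int_{E_{k+1}^c}|f_j|\di\mu$-mass, which is what ensures the near-disjointness of the supports $S_j$ and underlies the lower $\ell_1$-estimate on $(w_k)$.
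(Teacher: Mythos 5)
Your proof is correct, and it reorganizes the paper's argument in a genuinely more streamlined way. The paper factors the proof through two separate results: the finite-measure case (Proposition~\ref{L1-konecne}), which handles mass concentrating on sets of small measure, and the $\ell_1$-sum lemma (Lemma~\ref{lm-ell1sum}), which handles mass escaping every finite-measure set. It therefore introduces the intermediate quantity $\theta$ (the ``escape-to-infinity'' portion of $\omega(A)$), extracts two separate $\ell_1$-basic sequences with constants roughly $\theta$ and $\omega(A)-\theta$, and glues them using disjointness of supports so that the constants add; this in turn forces the paper's Step~1 (a sequence every subsequence of which retains the same $\omega$), because subsequences are extracted twice along the way. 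Your single truncation at the level $c_k\bchi_{E_k}$ captures both escape mechanisms in the one function $w_k$ of modulus $(|f_k|-c_k\bchi_{E_k})^+$, and your two inductive conditions --- the Chebyshev threshold $\mu(E)\le 1/c_{k+1}$ and the tail bound on $\int_{E_{k+1}^c}|f_j|\di\mu$ --- are exactly what is needed to make the supports $S_j$ almost disjoint in the $L^1$ sense, so the layering estimate from Proposition~\ref{L1-konecne} goes through essentially verbatim and yields the lower $\ell_1$-bound with constant $d-O(\varepsilon)$ in a single pass. The remaining points check out: $\|v_k\|\le 2\varepsilon$ follows because $c_k\bchi_{E_k}\ge c_1\bchi_{E_1}$ pointwise, so the three pieces share the sign of $f_k$ and the norms split additively; $(u_k)$ lies in the weakly compact set $\{g:|g|\le c_1\bchi_{E_1}\}$ since $\mu(E_1)<\infty$; and the cluster-point argument via \cite[Lemma~5]{wesecom} is the same as in Proposition~\ref{L1-konecne}. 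What your route buys is the elimination of Step~1 and of the bookkeeping with $\theta$; what the paper's route buys is direct reuse of two lemmas that are needed elsewhere anyway.
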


\begin{proof}  Let $A\subset L^1(\mu)$ be a bounded set. It is enough to prove that $\wck{A}\ge\omega(A)$. This will be done using Proposition~\ref{L1-konecne}, Lemma~\ref{lm-ell1sum}
(or, more exactly, claims in the respective proofs) and the formula from Lemma~\ref{L1-omega}.
We will proceed in several steps.

\medskip

{\sc Step 1:} There is a sequence $(f_k)$ in $A$ such that for each subsequence $(f_{k_n})$ we have
$\omega(\{f_{k_n}:n\in\en\})=\omega(A)$.

\smallskip

For each $f\in L^1(\mu)$ set $E_n(f)=\{t:|f(t)|>\frac1n\}$. 
Let us remark that all the sets $E_n(f)$ have obviously finite measure.

By induction we will construct for each $k\in\en$ a function $f_k\in A$ and a set $E_k$ of finite measure.

We start by choosing $f_1\in A$ such that $\int |f_1|\di\mu>\omega(A)-1$. This is possible by Lemma~\ref{L1-omega}.

Having constructed $f_1,\dots,f_k$, set
$E_k=E_k(f_1)\cup\dots\cup E_k(f_k)$. Then $E_k$ is a set of finite measure and hence there is 
some $f_{k+1}\in A$ such that 
$$\int (|f_{k+1}|-k\bchi_{E_k})^+\di\mu>\omega(A)-\frac1{k+1}.$$
This is possible again due to Lemma~\ref{L1-omega}.

This completes the inductive construction. We claim that the sequence $(f_k)$ has the required properties. This will be done using Lemma~\ref{L1-omega}.

Set $E_\infty=\bigcup_{k\in\en} E_k$. Then all the functions $f_k$ are equal to zero outside $E_\infty$.
Let $E$ be a set of finite measure and $c>0$ be arbitrary.
Fix an arbitrary $\varepsilon>0$. We can find $n\in\en$ such that $n\ge c$, $\frac{1}{n}<\frac{\varepsilon}{2}$ and $\mu((E\cap E_\infty)\setminus E_n)<\frac{\varepsilon}{2c}$.
Then for each $k\ge n$ we have
$$\begin{aligned}
\int(|f_{k+1}|&-c\bchi_E)^+ \di\mu
= \int_{E_\infty} (|f_{k+1}|-c\bchi_E)^+ \di\mu
\\&=  \int_{E_k} (|f_{k+1}|-c\bchi_E)^+ \di\mu +  \int_{E_\infty\setminus E_k} (|f_{k+1}|-c\bchi_E)^+ \di\mu
\\&\ge \int_{E_k} (|f_{k+1}|-c)^+ \di\mu +  \int_{E_\infty\setminus E_k} (|f_{k+1}|-c\bchi_E)^+ \di\mu
\\&=  \int_{E_\infty} (|f_{k+1}|-c\bchi_{E_k})^+ \di\mu - \int_{E_\infty\setminus E_k}|f_{k+1}|\di\mu    \\&\qquad\qquad+\int_{E_\infty\setminus E_k} (|f_{k+1}|-c\bchi_E)^+ \di\mu
\\&\ge\int_{E_\infty} (|f_{k+1}|-k\bchi_{E_k})^+ \di\mu - \int_{E_\infty\setminus E_k}(|f_{k+1}|- (|f_{k+1}|-c\bchi_E)^+ )\di\mu
\\&\ge\omega(A)-\frac{1}{k+1}-\int_{(E\cap E_\infty)\setminus E_k}(|f_{k+1}|- (|f_{k+1}|-c)^+ )\di\mu
\\&\ge\omega(A)-\frac1{k+1}-c\mu((E\cap E_\infty)\setminus E_k)>\omega(A)-\varepsilon.
\end{aligned}$$

This completes the proof of Step 1. Indeed, let $(f_{k_n})$ be a subsequence of $(f_k)$. Let $E$ be a set of finite measure, $c>0$ and $\varepsilon>0$. By the previous paragraph,
\[
\int(|f_{k_n}|-c\bchi_E)^+ \di\mu>\omega(A)-\varepsilon
\]
for $k_n$ large enough.
Hence 
\[
\omega(\{f_{k_n}:n\in\en\})\ge\omega(A)-\varepsilon
\]
by Lemma~\ref{L1-omega}. Since $\varepsilon>0$ is arbitrary, we get $\omega(\{f_{k_n}:n\in\en\})\ge\omega(A)$. The converse inequality is obvious.

\medskip

{\sc Step 2.} Let $A_0=\{f_k:k\in\en\}$, where $(f_k)$ is the sequence from Step 1. 
Set $$\theta=\inf\left\{\varepsilon>0:(\exists E,\ \mu(E)<\infty)(\forall f\in A_0)(\int |f|(1-\bchi_E)\di\mu<\varepsilon)\right\}.$$
By Lemma~\ref{lm-ell1sum} we get $\wck{A_0}\ge\theta$. (Indeed, let $E_\gamma$ and $\mu_\gamma$ be as in the proof of Lemma~\ref{L1-omega}. Then $\theta$ is not greater then the quantity from Lemma~\ref{lm-ell1sum}).
In particular, we have $\theta\le\omega(A_0)$ and, if $\theta=\omega(A_0)$, then $\wck{A_0}=\omega(A_0)$ and hence $\wck{A}\ge\wck{A_0}=\omega(A_0)=\omega(A)$ and the proof is finished.

So suppose that $\theta<\omega(A_0)$ and fix an arbitrary $\varepsilon\in(0,\frac16(\omega(A_0)-\theta))$.
By the definition of $\theta$ we can find $E_0$ with $\mu(E_0)<\infty$ such that for all $f\in A_0$ we have $\int |f|(1-\bchi_{E_0})\di\mu<\theta+\varepsilon$.

\medskip

{\sc Step 3.} Let $E_\gamma$ and $\mu_\gamma$ be as in Lemma~\ref{L1-omega} such that there is $\gamma_0\in\Gamma$ with $E_{\gamma_0}=E_0$. Let $\mu_0$ denote the restriction of the measure $\mu$ to $E_0$. By the claim in the proof of Lemma~\ref{lm-ell1sum}(i), there is a subsequence $(f_{k_n})$ of $(f_k)$ such that
$$\left\|\sum_{j=1}^n \lambda_j f_{k_j}(1-\bchi_{E_0})\right\|\ge(\theta-4\varepsilon)\sum_{j=1}^n|\lambda_j|$$
for each $n\in\en$ and any choice of scalars $\lambda_1,\dots,\lambda_n$.

\medskip

{\sc Step 4.} Set $A_1=\{f_{k_n}:n\in\en\}$. By Step 1 we have $\omega(A_1)=\omega(A_0)=\omega(A)$.
Further set $A_2=\{f_{k_n}\bchi_{E_0}: n\in\en\}$. Then $\omega(A_2)\ge\omega(A)-\theta-\varepsilon$.

Indeed, it follows from Lemma~\ref{L1-omega} that for each $c>0$ and $\delta>0$ there is $n\in\en$ with
$\int(|f_{k_n}|-c\bchi_{E_0})^+\di\mu>\omega(A)-\delta$. Then
$$\begin{aligned}\int(|f_{k_n}|\bchi_{E_0}-c\bchi_{E_0})^+\di\mu_0
&=\int(|f_{k_n}|-c\bchi_{E_0})^+\di\mu-\int |f_{k_n}|(1-\bchi_{E_0})\di\mu
\\&>\omega(A)-\delta-\theta-\varepsilon.
\end{aligned}$$
So, $\omega(A_2)\ge\omega(A)-\delta-\theta-\varepsilon$ by Proposition~\ref{L1-konecne}.
Since $\delta>0$ is arbitrary, $\omega(A_2)\ge\omega(A)-\theta-\varepsilon$.

\medskip

{\sc Step 5.} By the claim in the proof of Proposition~\ref{L1-konecne} there is a subsequence
$(f_{k_{n_j}})$ and sequences $(u_j)$, $(v_j)$ and $(w_j)$ in $L^1(\mu_0)\subset X$ such that 
\begin{itemize}
	\item $f_{k_{n_j}}\bchi_{E_0}=u_j+v_j+w_j$ for $j\in\en$,
	\item $(u_j)$ is weakly convergent,
	\item $\|v_j\|_X\le 2\varepsilon$ for $j\in\en$,
	\item $\|\sum_{j=1}^n\lambda_jw_j\|_X\ge(\omega(A)-\theta-4\varepsilon)\sum_{j=1}^n|\lambda_j|$, $n\in\en$ and $\lambda_1,\dots,\lambda_n$ are scalars.
\end{itemize}

\medskip

{\sc Step 6. Conclusion.} We have 
$$f_{k_{n_j}}=u_j+v_j+w_j+f_{k_{n_j}}(1-\bchi_{E_0})$$
for each $j\in\en$. 
Further,
$$\begin{aligned}
\left\|\sum_{j=1}^n\lambda_j(w_j+(1-\bchi_{E_0}) f_{k_{n_j}})\right\|
&=\left\|\sum_{j=1}^n\lambda_jw_j\right\|+\left\|\sum_{j=1}^n\lambda_j(1-\bchi_{E_0}) f_{k_{n_j}}\right\|\\&\ge (\omega(A)-8\varepsilon)\sum_{j=1}^n|\lambda_j|\end{aligned}$$
for arbitrary scalars $\lambda_1,\dots,\lambda_n$ and $n\in\en$.

Now, in the same way as in the proof of Proposition~\ref{L1-konecne} we can show that $\dd(f^{**},X)>\omega(A)-10\varepsilon$ whenever $f^{**}$ is a weak$^*$ cluster point of $(f_{k_{n_j}})$. 
It follows that $\wck{A}\ge \omega(A)-10\varepsilon$. Since $\varepsilon>0$ is arbitrary,
this completes the proof.
\end{proof}

We remind that the quantity $\omega(A)$ can be explicitly computed, see Lemma~\ref{L1-omega} for the general case and Proposition~\ref{L1-konecne} for the case of finite $\mu$. 

\begin{cor}
\label{main-L1-cor}
Every $L^1(\mu)$ space, where $\mu$ is an arbitrary non-negative $\sigma$-additive measure, has the dual quantitative Dunford-Pettis property.
\end{cor}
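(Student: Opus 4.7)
The plan is to combine Theorem~\ref{main-L1} with Theorem~\ref{q-DP-univ} to derive the dual quantitative Dunford-Pettis property of $L^1(\mu)$ via condition (v) of Theorem~\ref{q-DP-dual}. Since the corollary is essentially a direct consequence of the machinery already assembled, no new computation should be needed.

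First, recall (as noted in the introduction) that every $L^1(\mu)$ space has the classical Dunford-Pettis property by \cite[Th\'eor\`eme~1]{gro}. Hence by Theorem~\ref{q-DP-univ}, specifically the implication (i)$\Rightarrow$(vii), every bounded sequence $(x_n)$ in $X=L^1(\mu)$ satisfies
\[
\wca[\rho]{x_n} \le 2\,\omega(\{x_n:n\in\en\}).
\]

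Next, invoke Theorem~\ref{main-L1}, which asserts $\omega(A)=\wk{A}$ for every bounded subset $A$ of $L^1(\mu)$. Applying this to $A=\{x_n:n\in\en\}$ and substituting into the previous inequality yields
\[
\wca[\rho]{x_n} \le 2\,\wk[X]{\{x_n:n\in\en\}}
\]
for every bounded sequence $(x_n)$ in $X$. This is precisely condition (v) of Theorem~\ref{q-DP-dual} with constant $C=2$, so $X$ has the dual quantitative Dunford-Pettis property.

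There is no real obstacle here: all the work has been done in Theorem~\ref{main-L1}, and the remaining step is a purely formal substitution into the already-established characterization. The only thing worth double-checking is that the implication from Theorem~\ref{q-DP-univ}(i)$\Rightarrow$(vii) is applicable, which it is since $L^1(\mu)$ has the Dunford-Pettis property for every $\sigma$-additive non-negative measure $\mu$.
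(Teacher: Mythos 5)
Your proposal is correct and is exactly the paper's own argument: the Dunford-Pettis property of $L^1(\mu)$ combined with Theorem~\ref{q-DP-univ}(vii) gives $\wca[\rho]{x_n}\le 2\,\omega(\{x_n:n\in\en\})$, and Theorem~\ref{main-L1} converts $\omega$ into $\wk[X]{\cdot}$ to yield condition (v) of Theorem~\ref{q-DP-dual} with $C=2$. Nothing further is needed.
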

\begin{proof}
The fact that $L^1(\mu)$ spaces have the Dunford-Pettis property, assertion (vii) of Theorem~\ref{q-DP-univ} and Theorem~\ref{main-L1} immediately imply condition (v) in Theorem~\ref{q-DP-dual}.
\end{proof}

%%%%%%%%%%%%%%%%%%%%%%%%%%%%%%%%%%%%%%%%%%%%%
\section{Direct quantification for $C(K)$ spaces}
\label{sec-directCK}

In this section we prove that $\L_\infty$ spaces possess the direct quantitative Dunford-Pettis property.
Using the results of the previous section we prove exact results for $C(K)$ spaces (or, more generally, for $L^1$ preduals) and for preduals of $\ell_1(\Gamma)$. At the end of this section we transfer these properties to $\L_\infty$ spaces.

\begin{thm}\label{main-L1predual} Let $X$ be an $L^1$ predual, i.e., a Banach space such that $X^*$ is isometric to $L^1(\mu)$ for a non-negative $\sigma$-additive measure $\mu$. In particular, $X$ can be the space $C_0(\Omega)$ for a locally compact Hausdorff space $\Omega$, or the space $A(K)$ of continuous affine functions on a Choquet simplex~$K$. Let $Y$ be any Banach space and $T:X\to Y$ a bounded linear operator. Then
\begin{gather*}
	\wk[Y]{T}\le2\wk[X^*]{T^*}\le2\omega(T^*)=2\wk[X^*]{T^*}\le4\wk[Y]{T}\le4\omega(T),\\
	\cc{T}\le 2\omega(T^*)=2\wk[X^*]{T^*}.
\end{gather*}
\end{thm}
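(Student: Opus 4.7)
The plan is to assemble earlier results, with Theorem~\ref{main-L1} as the only nontrivial input. The starting observation is that $T^*(B_{Y^*})$ is a bounded subset of $X^*\cong L^1(\mu)$; applying Theorem~\ref{main-L1} to this set immediately gives the central identity
\[
\omega(T^*)=\wk[X^*]{T^*},
\]
which is precisely the middle equality appearing in both displayed lines of the theorem.

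With this identity in hand, the first chain is built from inequalities already recorded in Section~\ref{sec-pre}. The quantitative Gantmacher theorem \eqref{eq:Gant-dh} supplies both $\wk[Y]{T}\le 2\wk[X^*]{T^*}$ and $\wk[X^*]{T^*}\le 2\wk[Y]{T}$, while the trivial bounds $\wk[X^*]{T^*}\le\omega(T^*)$ and $\wk[Y]{T}\le\omega(T)$ come from \eqref{eq:wcomp2}. Splicing these around the identity $\omega(T^*)=\wk[X^*]{T^*}$ produces the full chain $\wk[Y]{T}\le 2\wk[X^*]{T^*}\le 2\omega(T^*)=2\wk[X^*]{T^*}\le 4\wk[Y]{T}\le 4\omega(T)$ without any additional work.

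For the second line, the same identity reduces $\cc{T}\le 2\omega(T^*)=2\wk[X^*]{T^*}$ to the single inequality $\cc{T}\le 2\omega(T^*)$, which is precisely condition~(ii) of Theorem~\ref{q-DP-univ} and hence holds whenever $X$ enjoys the Dunford-Pettis property. Every $L^1$-predual has the Dunford-Pettis property by the classical theorem of Grothendieck \cite[Th\'eor\`eme~1]{gro}; this covers the specific examples $X=C_0(\Omega)$ and $X=A(K)$ for a Choquet simplex $K$, both of which are standard $L^1$-preduals (the latter via the representation of $A(K)^*$ by boundary measures). The only genuine obstacle was the identification $\omega(\cdot)=\wk[X^*]{\cdot}$ on bounded subsets of $L^1(\mu)$ established in Theorem~\ref{main-L1}; granted that, the present statement is a direct corollary combining it with the quantitative Gantmacher theorem and the universal quantitative Dunford-Pettis inequality.
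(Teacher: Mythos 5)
Your proposal is correct and follows essentially the same route as the paper: the middle equality $\omega(T^*)=\wk[X^*]{T^*}$ from Theorem~\ref{main-L1}, spliced with the quantitative Gantmacher inequality \eqref{eq:Gant-dh} and the trivial bounds from \eqref{eq:diagrammsets} for the first chain, and Theorem~\ref{q-DP-univ}(ii) together with the classical Dunford--Pettis property of $L^1$ preduals for the second line. Nothing is missing.
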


The first line of inequalities follows from \eqref{eq:Gant-dh}, \eqref{eq:diagrammsets} and Theorem~\ref{main-L1}. It shows the equivalence of quantities $\wk[Y]{T}$, $\wk[X^*]{T^*}$ and $\omega(T^*)$. In \cite[Example 3.2]{ka-spu-stu} we show that the quantity $\omega(T)$ is not equivalent to the other three quantities even in the case of a $C(K)$ space.

The second line shows the direct quantitative version of the Dunford-Pettis property and follows from the first line and Theorem~\ref{q-DP-univ}(ii) using the fact that $L^1$~preduals have the Dunford-Pettis property.

We continue by a stronger version of Theorem~\ref{main-L1predual} in the special case of $X^*$ being isometric to the space $\ell_1(\Gamma)$.

\begin{thm}\label{main-ell1predual}
Let $X$ be a Banach space such that $X^*$ is isometric to $\ell_1(\Gamma)$ for a set $\Gamma$.
In particular, $X$ can be the space $C(K)$ for $K$ scattered compact space or the space $c_0(\Gamma)$.
Let $Y$ be any Banach space and $T:X\to Y$ a bounded linear operator. Then the following inequalities hold.
\begin{multline*}
\wk[Y]{T}\le\omega(T)\le\chi(T)\le\cc{T}\\
\le 2\omega(T^*)=2\chi(T^*)=2\wk[X^*]{T^*}\le 4\wk[Y]{T}.
\end{multline*}
\end{thm}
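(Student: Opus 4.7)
The plan is to assemble the theorem from previously established results, using the assumption that $X^*\cong \ell_1(\Gamma)$ in two essential ways: to invoke the Schur property of $X^*$, and to apply the stronger results of Section~\ref{sec-L1}. First I would observe that $\ell_1(\Gamma)$ has the Schur property (a weakly null sequence in $\ell_1(\Gamma)$ is supported on a countable subset, reducing to the classical $\ell_1$ case). Hence Theorem~\ref{t:qsch2}(i) applies to $X$ and gives the opening chain
\[
\wk[Y]{T}\le\omega(T)\le\chi(T)\le\cc{T}\le 2\omega(T^*)=2\chi(T^*),
\]
covering all the inequalities in the statement except the final two.

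Next I would replace the middle $\chi(T^*)$ by $\wk[X^*]{T^*}$. The relevant bounded set is $T^*(B_{Y^*})\subset X^*\cong\ell_1(\Gamma)$, and Proposition~\ref{ell1Gamma} asserts that the quantities $\chi$, $\omega$, $\wk[]{}$ and $\wck[]{}$ all coincide on bounded subsets of $\ell_1(\Gamma)$. Evaluated on $T^*(B_{Y^*})$ this yields the identity $\chi(T^*)=\wk[X^*]{T^*}$, which, multiplied by $2$, is exactly the equality $2\chi(T^*)=2\wk[X^*]{T^*}$ in the statement.

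Finally, the concluding inequality $2\wk[X^*]{T^*}\le 4\wk[Y]{T}$ is just a restatement of the quantitative Gantmacher inequality~\eqref{eq:Gant-dh}. Stringing these three observations together completes the proof. There is essentially no obstacle here: the content of the theorem is that, under the stronger hypothesis $X^*\cong\ell_1(\Gamma)$, Proposition~\ref{ell1Gamma} upgrades the last quantity in Theorem~\ref{t:qsch2}(i) from $4\chi(T)$ to $4\wk[Y]{T}$, reflecting the fact that in $\ell_1(\Gamma)$ the de Blasi measure of weak non-compactness collapses to the Hausdorff measure of non-compactness.
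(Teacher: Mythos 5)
Your proposal is correct and follows essentially the same route as the paper, which likewise derives the theorem by combining Theorem~\ref{t:qsch2}(i) (via the Schur property of $\ell_1(\Gamma)$) with Proposition~\ref{ell1Gamma} and the quantitative Gantmacher inequality \eqref{eq:Gant-dh}. Your write-up is in fact slightly more explicit than the paper's one-line proof, since you spell out the final step $2\wk[X^*]{T^*}\le 4\wk[Y]{T}$ that the paper leaves implicit.
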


The theorem follows from Theorem~\ref{t:qsch2} and Proposition~\ref{ell1Gamma}, and shows that in this case weakly compact operators, completely continuous operators and compact operators coincide and, moreover, all the quantities measuring non-compactness, weak non-compactness and non-complete continuity are equivalent. So, the spaces satisfying the assumptions of Theorem~\ref{main-ell1predual} have both the direct quantitative Dunford-Pettis property and the quantitative reciprocal Dunford-Pettis property. Eventhough we did not define the quantitative reciprocal Dunford-Pettis property, we consider the inequality $\wk[Y]{T}\le C\cc{T}$ to be an acceptable candidate because it quantifies the fact that any completely continuous operator is weakly compact (see also remarks after Theorem~\ref{t:qsch2}).

It is natural to ask whether such an inequality can be proved for general $L_1$ preduals.
It is proved in \cite{ka-spu-stu} that this is the case for $C(K)$ spaces. 
More precisely, if $X$ is a $C(K)$ space, Theorem 3.1 of \cite{ka-spu-stu} shows that, for any Banach space $Y$ and an operator $T:X\to Y$, it holds $\frac{1}{4\pi} \wk[Y]{T}\le \cc{T}\le 4\wk[Y]{T}$. On the other hand, an example is presented in \cite{ka-spu-stu} showing that $\cc{\cdot}$ is not equivalent to $\omega(\cdot)$ for operators on $C(K)$ spaces.

Finally, the last theorem of this section proves the direct quantitative Dunford-Pettis property for every $\L_\infty$ space in general. We will use the following easy proposition.

\begin{prop}
\label{qdp-iq}
Let $X$ and $Y$ be Banach spaces such that $Y$ is isomorphic to a complemented subspace of $X$.
If $X$ has either version of the quantitative Dunford-Pettis property then $Y$ has the same version of the quantitative Dunford-Pettis property.
\end{prop}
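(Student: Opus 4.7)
The plan is to use the operator characterizations, namely condition (i) of Theorem~\ref{q-DP-direct} and condition (i) of Theorem~\ref{q-DP-dual}. Write the complementation hypothesis as the existence of bounded linear operators $J\colon Y\to X$ and $Q\colon X\to Y$ with $Q\circ J=\mathrm{id}_Y$; set $M=\|J\|\cdot\|Q\|$. The idea is simply that every test operator on $Y$ can be factored through $X$ using $J$ and $Q$, and the relevant quantities $\cc{\cdot}$ and $\wk[]{\cdot}$ are subhomogeneous under such factorizations.

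For the direct case, fix an operator $T\colon Y\to W$ to an arbitrary Banach space $W$. Factor it as $T=(TQ)\circ J$. If $(y_n)\subset B_Y$ is weakly Cauchy, then $(Jy_n)$ is weakly Cauchy in $\|J\|\,B_X$, so $\ca{Ty_n}=\ca{(TQ)(Jy_n)}\le\|J\|\cc{TQ}$; taking the supremum yields $\cc{T}\le\|J\|\cc{TQ}$. Next, $(TQ)^*=Q^*T^*$, and one has the routine bound
\begin{equation*}
\wk[Z_1]{R(A)}\le\|R\|\,\wk[Z_2]{A}
\end{equation*}
for any bounded operator $R\colon Z_2\to Z_1$ and bounded set $A\subset Z_2$ (proved by noting that $R^{**}$ is weak*-to-weak* continuous, takes $X$-elements to $Z_1$-elements, and has norm $\|R\|$). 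Applied with $R=Q^*$ and $A=T^*(B_{W^*})$ this gives $\wk[X^*]{(TQ)^*}\le\|Q\|\wk[Y^*]{T^*}$. The direct quantitative Dunford--Pettis property of $X$ (with constant $C$) then yields
\begin{equation*}
\cc{T}\le\|J\|\cc{TQ}\le\|J\|\cdot C\wk[X^*]{(TQ)^*}\le CM\wk[Y^*]{T^*},
\end{equation*}
so $Y$ satisfies condition (i) of Theorem~\ref{q-DP-direct} with constant $CM$.

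For the dual case proceed symmetrically. Let $T\colon Z\to Y$ be any operator from a Banach space $Z$, and set $S=JT\colon Z\to X$. Then $T=QS$, so $T^*=S^*Q^*$; for any weakly Cauchy $(y^*_n)\subset B_{Y^*}$ the sequence $(Q^*y^*_n)$ is weakly Cauchy in $\|Q\|\,B_{X^*}$, whence $\cc{T^*}\le\|Q\|\cc{S^*}$. On the other hand, $\wk[X]{S}=\wk[X]{JT}\le\|J\|\wk[Y]{T}$ by the same auxiliary inequality. Invoking the dual quantitative Dunford--Pettis property of $X$ gives
\begin{equation*}
\cc{T^*}\le\|Q\|\cc{S^*}\le\|Q\|\cdot C\wk[X]{S}\le CM\wk[Y]{T},
\end{equation*}
so $Y$ satisfies condition (i) of Theorem~\ref{q-DP-dual} with constant $CM$.

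There is essentially no obstacle beyond the bookkeeping of the factorization and of the constants; the only mildly non-trivial ingredient is the auxiliary inequality $\wk[]{R(A)}\le\|R\|\wk[]{A}$, which is not explicitly recorded in the preliminaries but is a direct consequence of $R^{**}$ being a weak*-to-weak* continuous norm-$\|R\|$ extension of $R$.
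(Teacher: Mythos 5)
Your proof is correct. The direct‑case argument is essentially the paper's: both factor a test operator $T$ on $Y$ as $T=(TQ)\circ J$ and push the estimate through $X$; the only difference is that you invoke condition (i) of Theorem~\ref{q-DP-direct} (with $\wk[X^*]{(TQ)^*}$) and therefore need the transfer inequality $\wk[Z_1]{R(A)}\le\|R\|\wk[Z_2]{A}$ across a change of ambient space, whereas the paper uses condition (vi) (with $\wk[Z]{TQ}$), for which only the trivial observation $TQ(B_X)\subset\|Q\|\,T(B_Y)$ inside the fixed space $Z$ is needed. Your auxiliary inequality is correctly justified via the weak*-to-weak* continuity of $R^{**}$ (modulo the harmless slip of writing ``$X$-elements'' where you mean $Z_2$-elements), so this is a legitimate, if slightly heavier, variant.

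For the dual case your route genuinely differs from the paper's. You stay at the operator level: given $T\colon Z\to Y$ you set $S=JT$, use $T^*=S^*Q^*$ to get $\cc{T^*}\le\|Q\|\cc{S^*}$, and apply condition (i) of Theorem~\ref{q-DP-dual} to $S$. The paper instead verifies the sequential characterization (iv), showing $\ca[\rho_Y]{x_n}\le\|Q\|\ca[\rho_X]{x_n}\le C\|Q\|\de{x_n}$ for a bounded sequence $(x_n)$ in $Y$, using that $Q^*(B_{Y^*})\subset\|Q\|B_{X^*}$ carries weakly compact subsets of $B_{Y^*}$ into (multiples of) weakly compact subsets of $B_{X^*}$ and that $\de{x_n}$ is unchanged when the sequence is viewed in $X$. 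Your version is more uniform with the direct case and keeps the constant-tracking transparent ($CM$ with $M=\|J\|\|Q\|$ in condition (i) of each theorem); the paper's version gives the constant directly in the sequential condition (iv), which is the form it later needs (e.g.\ in the proof of Theorem~\ref{t:Linftydual}). Both are complete proofs, and your handling of the general pair $J,Q$ with $QJ=\mathrm{id}_Y$ absorbs the paper's preliminary reduction ``isomorphism invariance plus actual complemented subspace'' into a single step.
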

\begin{proof} It is obvious that both versions of the quantitative Dunford-Pettis properties are preserved by isomorphisms (only the respective constants may change). So, suppose that $Y$ is a complemented subspace of $X$. Let $Q$ be a bounded linear projection of $X$ onto $Y$. 

Suppose first that $X$ has the direct quantitative Dunford-Pettis property, i.e., there is $C>0$ such that $\cc{T}\le C\wk[Z]{T}$ whenever $T:X\to Z$ is an operator and $Z$ is a Banach space. 
To show that $Y$ has the same property, fix any Banach space $Z$ and an operator $T:Y\to Z$.
Since $B_Y\subset Q(B_X)\subset \|Q\| B_Y$, we have
$$
\cc{T}\le \cc{TQ}\le C\wk[Z]{TQ}\le C\|Q\|\wk[Z]{T} 
$$
and we are done.

Now suppose that $X$ has the dual quantitative Dunford-Pettis property, i.e., there is $C>0$ such that $\ca[\rho_X]{x_n}\le C\de{x_n}$ for each bounded sequence $(x_n)$ in $X$. So, let $(x_n)$ be a bounded sequence in $Y$. Then $\de{x_n}$ is the same when considered with respect to $X$ or with respect to $Y$.
Further, $Q^*$ is an isomorphic embedding of $Y^*$ into $X^*$, in particular $Q^*(B_{Y^*})\subset \|Q\| B_{X^*}$, so $\ca[\rho_Y]{x_n}\le \|Q\|\ca[\rho_X]{x_n}$. It follows
that $\ca[\rho_Y]{x_n}\le C\|Q\|\de{x_n}$ and the proof is completed.
\end{proof}

\begin{thm}\label{t:Linftydirect}
Every $\L_\infty$ space $X$ has the direct quantitative Dunford-Pettis property.
\end{thm}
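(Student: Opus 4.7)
The plan is to reduce the claim to the $C(K)$ case (Theorem~\ref{main-L1predual}) via the bidual of $X$. I would first invoke the classical characterization of $\L_\infty$ spaces due to Lindenstrauss and Rosenthal: $X$ is an $\L_\infty$ space if and only if its bidual $X^{**}$ is an injective Banach space, that is, a $\mathcal{P}_\lambda$-space for some $\lambda\ge 1$. Since every injective Banach space is complemented in every containing space, and since $X^{**}$ embeds isometrically into $C(K)$ with $K=(B_{X^{***}},w^*)$, this gives that $X^{**}$ is isomorphic to a complemented subspace of some $C(K)$.

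The second step applies Theorem~\ref{main-L1predual}, which shows that $C(K)$ enjoys the direct quantitative Dunford-Pettis property. Consequently, Proposition~\ref{qdp-iq} transfers this property to $X^{**}$.

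It then remains to descend from $X^{**}$ back to $X$, which I would accomplish by two duality steps. Applying Remark~\ref{rm-duality}(b') to the pair $(X^*,X^{**})$ in place of $(X,X^*)$, the direct quantitative Dunford-Pettis property of $X^{**}$ yields the dual quantitative Dunford-Pettis property of $X^*$. Applying then Remark~\ref{rm-duality}(a') to the original pair $(X,X^*)$, the dual quantitative Dunford-Pettis property of $X^*$ delivers the direct quantitative Dunford-Pettis property of $X$.

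The main obstacle is the invocation of the classical Banach-space-theoretic fact that the bidual of an $\L_\infty$ space is injective, and the concomitant representation of $X^{**}$ as a complemented subspace of some $C(K)$. A parallel route that avoids the bidual would instead exploit that $X^*$ is an $\L_1$ space, which is known to be isomorphic to a complemented subspace of some $L^1(\mu)$; combined with Corollary~\ref{main-L1-cor} and Proposition~\ref{qdp-iq} this yields the dual quantitative Dunford-Pettis property of $X^*$, and Theorem~\ref{dual-conn}(a) then concludes.
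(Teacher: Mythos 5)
Your argument is correct, but your primary route differs from the paper's. The paper proves this theorem in one dualization: it quotes from [JoLi, pp.~57--58] that $X^*$ is isomorphic to a complemented subspace of some $L^1(\mu)$, invokes Corollary~\ref{main-L1-cor} (the dual quantitative Dunford--Pettis property of $L^1(\mu)$, which rests on Theorem~\ref{main-L1} together with the automatic quantification in Theorem~\ref{q-DP-univ}), transfers it to $X^*$ by Proposition~\ref{qdp-iq}, and descends to $X$ by Theorem~\ref{dual-conn}. This is exactly the ``parallel route'' you sketch in your last paragraph, so you have in effect reproduced the paper's proof as your fallback. Your main route instead goes up to the bidual: it uses the Lindenstrauss--Rosenthal characterization that $X^{**}$ is injective, hence complemented in a $C(K)$, applies the direct quantitative Dunford--Pettis property of $C(K)$ from Theorem~\ref{main-L1predual} together with Proposition~\ref{qdp-iq} to get the property for $X^{**}$, and then descends twice via Remark~\ref{rm-duality}(b') and (a'). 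This is sound: the injectivity of the bidual of an $\L_\infty$ space is classical and the two duality steps are exactly Theorem~\ref{dual-conn}(b) followed by (a). The trade-off is that your route invokes a heavier structural theorem and an extra dualization (which further degrades any explicit constants through the complementation constant and the factor $4$ in Remark~\ref{rm-duality}(b')), whereas the paper's route needs only the complementation of $X^*$ in $L^1(\mu)$ and a single application of Theorem~\ref{dual-conn}; note also that both routes ultimately rest on the same $L^1$ machinery of Section~\ref{sec-L1}, since Theorem~\ref{main-L1predual} is itself derived from Theorem~\ref{main-L1} applied to $C(K)^*$. A small bonus of your detour is that it establishes the direct quantitative Dunford--Pettis property of $X^{**}$ along the way, though this also follows from the theorem itself since $X^{**}$ is again an $\L_\infty$ space.
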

\begin{proof}
By \cite[pp. 57--58]{JoLi}, $X^\ast$ is isomorphic to a complemented subspace of some $L^1(\mu)$ space $Y$. By Corollary~\ref{main-L1-cor}, $Y$ has the dual quantitative Dunford-Pettis property. Therefore, by Proposition~\ref{qdp-iq}, $X^\ast$ also has the dual quantitative Dunford-Pettis property. Consequently, using Theorem~\ref{dual-conn}(b), $X$ has the direct quantitative Dunford-Pettis property.
\end{proof}

\begin{cor}
Every $\L_1$ space has the dual quantitative Dunford-Pettis property.
\end{cor}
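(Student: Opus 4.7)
The plan is to obtain this corollary as an immediate duality consequence of the $\L_\infty$ result established in Theorem~\ref{t:Linftydirect}, mirroring the chain of reductions already indicated right after the statement of Theorem~\ref{l1-li}. The key external input is the classical fact, recorded in \cite[p.~58]{JoLi}, that the dual of any $\L_1$ space is an $\L_\infty$ space. With this in hand, the argument has only three moves and there is no serious obstacle; it is essentially a one-line deduction once the structure theorems are lined up.

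First, let $X$ be an $\L_1$ space and apply the classical duality to see that $X^*$ is an $\L_\infty$ space (possibly with different parameters, but this is irrelevant since the quantitative Dunford-Pettis properties are defined by the existence of \emph{some} constant $C$). Next, invoke Theorem~\ref{t:Linftydirect} to conclude that $X^*$ has the direct quantitative Dunford-Pettis property. Finally, apply Theorem~\ref{dual-conn}(b) (or, equivalently, Remark~\ref{rm-duality}(b')): if the dual of a Banach space enjoys the direct quantitative Dunford-Pettis property, then the space itself enjoys the dual quantitative Dunford-Pettis property. Specializing to our $X$ delivers exactly the conclusion.

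The only point worth flagging is the bookkeeping of constants: Remark~\ref{rm-duality}(b') shows that the constants transfer cleanly across the duality, with at worst a factor of $4$ in the formulation of condition~(i) of Theorem~\ref{q-DP-dual}. Since we only need \emph{some} finite constant to witness the dual quantitative Dunford-Pettis property, no further optimization is required, and the proof is complete.
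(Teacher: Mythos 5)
Your proof is correct and is essentially the paper's own argument: pass to $X^*$, which is an $\L_\infty$ space by \cite[p.~58]{JoLi}, apply Theorem~\ref{t:Linftydirect} to get the direct quantitative Dunford-Pettis property of $X^*$, and transfer back via Theorem~\ref{dual-conn}(b). (The paper's one-line proof cites part (a) of Theorem~\ref{dual-conn}, but part (b) is the implication actually needed, exactly as you use it.)
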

\begin{proof}
This follows from Theorem~\ref{dual-conn}(a) and the fact that the dual of every $\L_1$ space is an $\L_\infty$ space, see \cite[p. 58]{JoLi}.
\end{proof}

%%%%%%%%%%%%%%%%%%%%%%%%%%%%%%%%%%%%%%%%%%%%%
\section{Dual quantification for $C(K)$ spaces}
\label{sec-dualCK}

In this section we show that $\L_\infty$ spaces enjoy the dual quantitative Dunford-Pettis property.
The first step is again an exact result on $C(K)$ spaces.

We start by the following proposition. Its first part is a quantification of the fact that in $C(K)$ any bounded pointwise convergent sequence is weakly convergent. The second part is a quantitative version of the Egoroff theorem.

\begin{prop}
\label{egoroff} Let $K$ be a compact space and let $(f_n)$ be a bounded sequence of continuous functions on $K$. Then the following assertions hold.
\begin{itemize}
	\item[(i)]  $\displaystyle\de{f_n}=\sup_{x\in K}\inf_{n\in\en} \sup_{i,j\ge n} |f_i(x)-f_j(x)|$.
	\item[(ii)] Let $\mu$ be a positive Radon measure on $K$. 
	Then for any $\varepsilon>0$ there exists a compact set $L\subset K$ such that $\mu(K\setminus L)\le\varepsilon$ and $\ca{f_n|_L}\le\de{f_n}$, where the sequence of functions $(f_n|_L)$ is considered in $C(L)$.
\end{itemize}
\end{prop}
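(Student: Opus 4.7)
The plan is to prove (i) by showing both inequalities and then deduce (ii) by a quantitative Egoroff-type extraction. The direction $\ge$ in (i) is immediate: by the remark that $\ca[\F]{\cdot}$ is unchanged if $\F$ is replaced by the family of its finite unions, one has
\[
\de{f_n}=\sup_{\phi\in B_{C(K)^*}}\inf_n\sup_{i,j\ge n}|\phi(f_i-f_j)|,
\]
and taking $\phi=\delta_x$ for each $x\in K$ and supping over $x$ produces the desired lower bound.

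For $\le$ in (i), let $M$ denote the right-hand side of (i), fix $\mu\in B_{C(K)^*}$, and consider two subsequences $(f_{n_k})$, $(f_{m_k})$ with $\mu(f_{n_k})\to\alpha$ and $\mu(f_{m_k})\to\beta$. Since $n_k,m_k\to\infty$, for every $x\in K$
\[
\limsup_k|f_{n_k}(x)-f_{m_k}(x)|\le\inf_n\sup_{i,j\ge n}|f_i(x)-f_j(x)|\le M.
\]
The uniform sup-norm bound on $(f_n)$ and finiteness of $|\mu|$ on compact $K$ legitimize reverse Fatou, yielding
\[
|\alpha-\beta|=\lim_k\Betr{\int(f_{n_k}-f_{m_k})\di\mu}\le\limsup_k\int|f_{n_k}-f_{m_k}|\di|\mu|\le M.
\]
Since $\inf_n\sup_{i,j\ge n}|\mu(f_i-f_j)|$ is exactly the diameter of the set of cluster points of the scalar sequence $(\mu(f_n))$, varying $\mu$ and the two subsequences gives $\de{f_n}\le M$.

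For (ii), introduce $U_{n,k}=\{x\in K:\sup_{i,j\ge n}|f_i(x)-f_j(x)|>\de{f_n}+\tfrac1k\}$ for $n,k\in\en$. Each $U_{n,k}$ is open because a supremum of continuous functions is lower semicontinuous; moreover $U_{n+1,k}\subset U_{n,k}$, and by (i) $\bigcap_n U_{n,k}=\emptyset$. As $\mu$ is a finite positive Radon measure on compact $K$, continuity from above yields $\mu(U_{n,k})\to 0$ as $n\to\infty$; pick $n_k$ with $\mu(U_{n_k,k})<\varepsilon/2^k$, set $A=\bigcup_k U_{n_k,k}$ (open) and $L=K\setminus A$ (closed in $K$, hence compact). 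Then $\mu(K\setminus L)<\varepsilon$, and for every $k$
\[
\sup_{x\in L}\sup_{i,j\ge n_k}|f_i(x)-f_j(x)|\le\de{f_n}+\tfrac1k,
\]
so $\ca{f_n|_L}\le\de{f_n}$.

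The main obstacle is the $\le$ inequality in (i): one must transfer supremum control over all of $B_{C(K)^*}$ down to the pointwise oscillation, and the natural bridge is the reverse Fatou lemma, which is available here precisely because $(f_n)$ is uniformly bounded in sup-norm and $\mu$ is finite on compact $K$. Part (ii) is then a routine Borel--Cantelli-style extraction exploiting that the oscillation superlevel sets $U_{n,k}$ are open with measure tending to zero.
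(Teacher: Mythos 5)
Your proof is correct and follows essentially the same route as the paper: part (ii) is the paper's argument verbatim (open superlevel sets of the oscillation, measures tending to zero, complement of a union of small sets), and the easy inequality in (i) is handled the same way. The only difference is in the hard inequality of (i), where you pass to cluster points of the scalar sequences $(\mu(f_n))$ and apply reverse Fatou to $|f_{n_k}-f_{m_k}|$, whereas the paper applies the monotone convergence theorem directly to the decreasing lower semicontinuous oscillation functions $g_n=\sup_{i,j\ge n}|f_i-f_j|$; both are the same limit-interchange under the integral, justified by the uniform bound on $(f_n)$ and the finiteness of $|\mu|$.
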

\begin{proof}
(i) The inequality `$\ge$' is obvious. Let us prove the converse one. Denote by $c$ the quantity on the right-hand side. For $n\in\en$ we define the function
$$g_n(x)=\sup_{i,j\ge n} |f_i(x)-f_j(x)|,\qquad x\in K.$$
Then $g_n$ is a non-negative lower semicontinuous (and hence Borel) function on $K$. Let $\nu\in B_{C(K)^*}$ be arbitrary. By the Riesz representation theorem we identify $\nu$ with a signed or complex Radon measure on $K$. Then
$$\begin{aligned}\inf_{n\in\en}\sup_{i,j\ge n} \left| \int (f_i-f_j)\di\nu\right|
&\le \inf_{n\in\en}\sup_{i,j\ge n} \int |f_i-f_j|\di|\nu|
\le \inf_{n\in\en} \int  g_n \di|\nu|
\\& = \int \inf_{n\in\en} g_n \di|\nu|
\le  \int  c \di|\nu| \le c.\end{aligned}$$
The only equality in this computation follows from the monotone convergence theorem, all the inequalities are trivial. Since $\de{f_n}$ is the supremum of the quantities on the left-hand side over $\mu\in B_{C(K)^*}$, we get $\de{f_n}\le c$ and conclude the proof.

(ii) For any two natural numbers $m$ and $k$ we define 
$$Q_{m,k} =\left\{ x\in K: \sup_{i,j\ge m}|f_i(x)-f_j(x)|>\de{f_n}+\frac1k\right\}.$$
The sets $Q_{m,k}$ are open in $K$, $Q_{m+1,k}\subset Q_{m,k}$, and $\bigcap_m Q_{m,k}=\emptyset$. It follows that $\mu(Q_{m,k})\to 0$ as $m\to\infty$. One can therefore choose $m_k$ so that $\mu(Q_{m_k,k})\le\frac{\varepsilon}{2^k}$. If $x$ belongs to $K\setminus Q_{m_k,k}$, we have
\[|f_i(x)-f_j(x)|\le\de{f_n}+\frac1k\]
for any $i,j\ge m_k$. It suffices to take $L=K\setminus\bigcup_k Q_{m_k,k}$.
\end{proof}

We will need the following well-known characterization of weakly compact subsets of $L^1(\mu)$.

\begin{lemma}[Dunford-Pettis, see {\cite[Theorem~4.21.2]{edwards}}]
\label{lm:dp}
Let $\mu$ be a positive Radon measure on a compact space $K$. In order that a subset $P$ of $L^1(\mu)$ be relatively weakly compact, it is necessary and sufficient that the following conditions be fulfilled:
\begin{itemize}
\item 
$\sup\{\int|f|d\mu:f\in P\}<\infty$.
\item  
Given $\varepsilon>0$, there exists a number $\delta>0$ such that 
\[\sup\left\{\int_A|f|d\mu:f\in P\right\}\le\varepsilon\]
provided $A\subset K$ is measurable and $\mu(A)\le\delta$.
\end{itemize}
\end{lemma}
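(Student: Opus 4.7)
My plan is to prove both directions via the correspondence between functions in $L^1(\mu)$ and their associated $\mu$-absolutely continuous (signed or complex) measures, using the Vitali--Hahn--Saks theorem as the engine of necessity and Radon--Nikodym combined with a diagonal extraction for sufficiency. Throughout one exploits that a Radon measure on a compact space $K$ is finite, so $L^\infty(\mu)=L^1(\mu)^*$ and constants are integrable.

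For necessity, boundedness of $P$ is immediate from weak compactness. Assume uniform absolute continuity fails: there exist $\varepsilon>0$, $f_n\in P$, and measurable $A_n$ with $\mu(A_n)\to 0$ but $\int_{A_n}|f_n|\di\mu>\varepsilon$. By Eberlein--\v{S}mulyan a subsequence $(f_{n_k})$ converges weakly to some $f\in L^1(\mu)$. Testing weak convergence against the indicator $\bchi_B\in L^\infty(\mu)$ shows that the measures $\nu_k(B)=\int_B f_{n_k}\di\mu$ converge setwise to $\nu(B)=\int_B f\di\mu$. The Vitali--Hahn--Saks theorem then forces the family $(\nu_k)$ to be uniformly $\mu$-absolutely continuous. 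A Hahn decomposition (or, in the complex case, splitting into real and imaginary parts and then into positive and negative variations) transfers this uniformity to the total variations $|\nu_k|=|f_{n_k}|\di\mu$, so $\int_{A_{n_k}}|f_{n_k}|\di\mu\to 0$, a contradiction.

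For sufficiency, by Eberlein--\v{S}mulyan it suffices to extract a weakly convergent subsequence from any $(f_n)\subset P$. Work over the countably generated $\sigma$-algebra generated by a countable dense family in $C(K)$ (possible since $K$ is compact); a standard diagonal argument produces a subsequence $(f_{n_k})$ such that $\int_B f_{n_k}\di\mu$ converges for every $B$ in the generating algebra. Uniform absolute continuity provides the equicontinuity (with respect to the Fr\'echet--Nikodym pseudometric $d(A,B)=\mu(A\triangle B)$) needed to extend setwise convergence to the full $\sigma$-algebra, and the limit functional is a countably additive $\mu$-absolutely continuous measure $\nu$. Radon--Nikodym yields $f\in L^1(\mu)$ with $\nu=f\di\mu$; approximating any $g\in L^\infty(\mu)$ uniformly by simple functions (using $\mu(K)<\infty$) upgrades setwise convergence to $\int g f_{n_k}\di\mu\to\int g f\di\mu$, i.e., weak convergence $f_{n_k}\to f$ in $L^1(\mu)$. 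The main obstacle is precisely this extension of setwise convergence from the generating algebra to the whole $\sigma$-algebra: uniform absolute continuity is what prevents pathological escape of mass and is the core analytic ingredient making the Dunford--Pettis criterion work.
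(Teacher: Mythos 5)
The paper offers no proof of this lemma at all --- it is quoted verbatim as a classical result with a pointer to Edwards' book --- so there is nothing internal to compare your argument against; I can only assess it on its own terms. Your necessity half is correct and standard: boundedness from weak compactness, and uniform absolute continuity via Eberlein--\v{S}mulyan, setwise convergence of $\nu_k(B)=\int_B f_{n_k}\,d\mu$ tested against indicators in $L^\infty(\mu)=L^1(\mu)^*$, Vitali--Hahn--Saks, and the passage from $\sup_{C\subset B}|\nu_k(C)|$ to $|\nu_k|(B)$ at the cost of a harmless constant.

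The sufficiency half contains a genuine error: you propose to run the diagonal argument over ``the $\sigma$-algebra generated by a countable dense family in $C(K)$ (possible since $K$ is compact).'' Compactness of $K$ does \emph{not} make $C(K)$ separable --- $C(K)$ is separable exactly when $K$ is metrizable, and the lemma is stated (and used in Section 8 of the paper) for arbitrary compact $K$; for $K=\beta\en$ or $K=\{0,1\}^\Gamma$ with $\Gamma$ uncountable no such countable dense family exists, so the step fails as written. The repair is routine and keeps the rest of your architecture intact: given the sequence $(f_n)\subset P$, take instead the $\sigma$-algebra $\Sigma_0$ generated by the countably many functions $f_n$ themselves (equivalently by the countably many sets $\{\Re f_n>q\}$, $\{\Im f_n>q\}$, $q\in\qe$). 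This $\Sigma_0$ is countably generated, so its measure algebra is separable for the Fr\'echet--Nikodym pseudometric and admits a countable generating algebra on which to diagonalize; your equicontinuity argument then extends convergence to all of $\Sigma_0$, and Radon--Nikodym produces $f\in L^1(\Sigma_0,\mu)$. One further (omitted but easy) step is then needed to pass from weak convergence in $L^1(\Sigma_0,\mu)$ to weak convergence in $L^1(\mu)$ over the full Borel $\sigma$-algebra: for $g\in L^\infty(\mu)$ one has $\int g f_{n_k}\,d\mu=\int E[g\mid\Sigma_0]f_{n_k}\,d\mu$ because each $f_{n_k}$ is $\Sigma_0$-measurable, and $E[g\mid\Sigma_0]\in L^\infty(\Sigma_0,\mu)$. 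With these two corrections the proof is complete.
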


The following lemma is the key step to prove the dual quantitative Dunford-Pettis property of $C(K)$ spaces.

\begin{lemma}
\label{q_P<de}
Let $K$ be  a compact  space and $\mu$ be a positive Radon measure on $K$.
Consider $L^1(\mu)$ canonically embedded into $C(K)^*$. Then for any bounded sequence $(f_n)$ in $C(K)$  and any relatively weakly compact subset $P$ of $B_{L^1(\mu)}$ we have
\[\inf_{n_0\in\en}\sup\{q_P(f_i-f_j):i,j\ge n_0\}\le\de{f_n}.\]
\end{lemma}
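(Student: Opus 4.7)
The plan is to combine the quantitative Egoroff theorem from Proposition~\ref{egoroff}(ii) with the Dunford-Pettis characterization of relative weak compactness in $L^1(\mu)$ (Lemma~\ref{lm:dp}). Set $M=\sup_n\|f_n\|_{C(K)}$ and fix an arbitrary $\varepsilon>0$. Since $P$ is relatively weakly compact in $L^1(\mu)$, Lemma~\ref{lm:dp} yields $\delta>0$ such that $\int_A|g|\di\mu\le\varepsilon$ for every $g\in P$ and every measurable $A\subset K$ with $\mu(A)\le\delta$. By Proposition~\ref{egoroff}(ii) applied to $\delta$ in place of $\varepsilon$, there is a compact set $L\subset K$ with $\mu(K\setminus L)\le\delta$ and $\ca{f_n|_L}\le\de{f_n}$, where $(f_n|_L)$ is viewed as a sequence in $C(L)$.

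Next, I would estimate $q_P(f_i-f_j)$ by splitting the integral over $L$ and $K\setminus L$. For every $g\in P$ (so $\|g\|_{L^1(\mu)}\le 1$) and all $i,j$, one has
\[
\Betr{\int(f_i-f_j)g\di\mu}\le \int_L|f_i-f_j||g|\di\mu+\int_{K\setminus L}|f_i-f_j||g|\di\mu\le \|f_i-f_j\|_{C(L)}+2M\varepsilon,
\]
using $\|g\bchi_L\|_1\le 1$ on the first piece and the Dunford-Pettis choice of $\delta$ on the second. Taking the supremum over $g\in P$ yields $q_P(f_i-f_j)\le \|f_i-f_j\|_{C(L)}+2M\varepsilon$.

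Now fix $\eta>0$. By definition of $\ca{\cdot}$ applied to $(f_n|_L)$, there is $n_0\in\en$ with $\sup\{\|f_i-f_j\|_{C(L)}:i,j\ge n_0\}\le \ca{f_n|_L}+\eta\le\de{f_n}+\eta$. Consequently
\[
\inf_{n_0\in\en}\sup\{q_P(f_i-f_j):i,j\ge n_0\}\le \de{f_n}+\eta+2M\varepsilon.
\]
Since $\varepsilon>0$ and $\eta>0$ were arbitrary, letting them tend to zero yields the desired inequality.

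I do not expect any serious obstacle: the two ingredients (Proposition~\ref{egoroff}(ii) and Lemma~\ref{lm:dp}) do exactly the work needed, and the only mild point is matching the ``uniform integrability'' threshold $\delta$ coming from $P$ with the measure of the small exceptional set in the Egoroff-type statement, which is handled by invoking Proposition~\ref{egoroff}(ii) with parameter $\delta$ rather than $\varepsilon$.
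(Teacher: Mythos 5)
Your proof is correct and follows essentially the same route as the paper's: uniform integrability of $P$ from Lemma~\ref{lm:dp} to control the integral over the small exceptional set, the quantitative Egoroff statement of Proposition~\ref{egoroff}(ii) to find a compact $L$ on which the sup-norm oscillation of the tail is controlled by $\de{f_n}$, and a split of the integral over $L$ and $K\setminus L$. The only differences are cosmetic (keeping the bound $M$ explicit rather than normalizing $\|f_n\|\le 1$, and the epsilon bookkeeping).
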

\begin{proof}
Without loss of generality, let us assume that $\|f_n\|\le 1$ for every $n\in\en$. 
Let $\varepsilon>0$. Using Lemma~\ref{lm:dp}, we first choose $\delta>0$ so that for any measurable set $A$ satisfying $\mu(A)\le\delta$, one has
\[\int_A|h|d\mu\le\frac{\varepsilon}{4}\]
for all $h\in P$.
By Proposition~\ref{egoroff}, we may choose a compact set $L\subset K$ such that $\mu(K\setminus L)\le\delta$ and $\ca{f_n|_L}\le\de{f_n}$. It follows that for any $h$ in $P$, $n_0\in\en$ and $i,j\ge n_0\in\en$, one has
\begin{eqnarray*}
\left|\int_K h(f_i-f_j)d\mu\right| &\le& \int_L|h(f_i-f_j)|d\mu + \int_{K\setminus L}|h(f_i-f_j)|d\mu\\
&\le& \sup_{k,l\ge n_0}\|(f_k-f_l)|_L\| + 2\cdot\frac{\varepsilon}4.
\end{eqnarray*}
Since the right-hand side tends to $\ca{f_n|_L}+\frac{\varepsilon}2$ as $n_0\to\infty$, we can determine $n_1\in\en$ independent of $h$ in $P$ such that $i,j\ge n_1$ entails
\[\left|\int h(f_i-f_j)d\mu\right|<\de{f_n}+\varepsilon\]
for all $h$ in $P$. This concludes the proof.
\end{proof}

\begin{thm}
\label{qdpt-ck}
Let $K$ be a compact space. Then for any bounded sequence $(f_n)$ in $C(K)$ we have
\[\ca[\rho]{f_n}=\de{f_n}.\]
In particular, $C(K)$ has the dual quantitative Dunford-Pettis property.
\end{thm}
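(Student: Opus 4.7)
The plan is to prove the equality $\ca[\rho]{f_n} = \de{f_n}$ directly; the dual quantitative Dunford-Pettis property will then follow immediately from condition~(iv) of Theorem~\ref{q-DP-dual} applied with $C = 1$.

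The inequality $\de{f_n} \le \ca[\rho]{f_n}$ is immediate, since every finite subset of $B_{C(K)^*}$ is weakly compact, so $\F_2 \subset \F_3$ and hence $\ca[\F_2]{f_n} \le \ca[\F_3]{f_n}$.

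For the reverse inequality I would fix an arbitrary relatively weakly compact set $F \subset B_{C(K)^*}$ and reduce the estimate of $\inf_{n_0} \sup_{i,j \ge n_0} q_F(f_i - f_j)$ to the situation covered by Lemma~\ref{q_P<de}. The reduction rests on the classical theorem of Grothendieck characterizing relative weak compactness in spaces of measures: for such $F$ there exists a positive Radon measure $\mu$ on $K$ with respect to which $F$ is uniformly absolutely continuous, that is, every $\nu \in F$ satisfies $\nu \ll \mu$, and for every $\varepsilon > 0$ there exists $\delta > 0$ such that $\mu(A) < \delta$ implies $|\nu|(A) < \varepsilon$ for every $\nu \in F$. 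Via the Radon-Nikod\'ym theorem, the assignment $\nu \mapsto h_\nu = d\nu/d\mu$ identifies $F$ with a set $P = \{h_\nu : \nu \in F\} \subset B_{L^1(\mu)}$ (the inclusion $L^1(\mu) \hookrightarrow M(K)$ being isometric), and uniform absolute continuity of $F$ translates into uniform integrability of $P$. Hence $P$ is relatively weakly compact in $L^1(\mu)$ by Lemma~\ref{lm:dp}.

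For any $f \in C(K)$ one has $q_F(f) = \sup_{\nu \in F} |\int f\di\nu| = \sup_{h \in P} |\int f h\di\mu| = q_P(f)$, so Lemma~\ref{q_P<de} applied to $P$ yields
$$\inf_{n_0} \sup_{i,j \ge n_0} q_F(f_i - f_j) = \inf_{n_0} \sup_{i,j \ge n_0} q_P(f_i - f_j) \le \de{f_n}.$$
Since the weakly compact set $F \subset B_{C(K)^*}$ was arbitrary, taking the supremum over $F$ gives $\ca[\rho]{f_n} \le \de{f_n}$, completing the proof of the equality. The main obstacle in this plan is the invocation of Grothendieck's existence theorem providing a common dominating measure $\mu$ for the entire weakly compact family $F$; once this is in place, everything else is direct bookkeeping combined with Lemma~\ref{q_P<de}.
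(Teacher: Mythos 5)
Your argument is correct, and its overall architecture coincides with the paper's: the lower bound is trivial, and the upper bound is obtained by transporting a weakly compact set of measures into a relatively weakly compact subset $P$ of some $B_{L^1(\mu)}$ and invoking Lemma~\ref{q_P<de}. The one genuine difference is exactly the step you flag as the main obstacle, namely how the dominating measure $\mu$ is produced. You appeal to the Grothendieck/Bartle--Dunford--Schwartz control-measure theorem for an arbitrary (possibly uncountable) relatively weakly compact family $F\subset M(K)$; that theorem is true and your use of it (uniform absolute continuity $\Rightarrow$ uniform integrability of $P$, hence relative weak compactness by Lemma~\ref{lm:dp}, and $q_F=q_P$ on $C(K)$) is sound, but it is a comparatively heavy external ingredient. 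The paper sidesteps it with an elementary observation: the quantity $\inf_{n_0}\sup_{i,j\ge n_0}q_H(f_i-f_j)$ exceeds $c$ only if countably many members of $H$ already witness this (pick, for each $n_0$, one witnessing measure), so it suffices to treat countable $H=\{\mu_n:n\in\en\}$, for which the control measure is written down explicitly as $\mu=\sum 2^{-n}|\mu_n|$ and $u(h)=h\cdot\mu$ gives an isometric copy of $L^1(\mu)$ in $C(K)^*$ containing $H$ (following Edwards). Your route buys a statement that applies directly to the whole weakly compact set at once; the paper's route buys self-containedness, needing nothing beyond the Radon--Nikod\'ym theorem. If you keep your version, you should supply a precise reference for the control-measure theorem, since the paper's bibliography does not contain one.
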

\begin{proof}
It is enough to prove $\ca[\rho]{f_n}\le\de{f_n}$, since the other inequality is always true. Let $H$ be a weakly compact subset of $B_{C(K)^\ast}$. In order to establish that
\[\inf_{n_0\in\en}\sup\{q_H(f_i-f_j):i,j\ge n_0\}\le\de{f_n},\]
it suffices to prove this inequality for any countable subset of $H$. So we may assume that $H$ is countable a relatively weakly compact. 

As in the proof of \cite[Theorem~9.4.4]{edwards}, the problem is reducible to the case in which $H=\{h\cdot\mu:h\in P\}$, where $\mu$ is a certain positive Radon measure on~$K$ and $P$ is a relatively weakly compact subset of $L^1(\mu)$. Indeed, let $H=\{\mu_n:n\in\en\}$. Then $\mu=\sum\frac{|\mu_n|}{2^n}$ is a positive Radon measure on~$K$. We define $u:L^1(\mu)\to C(K)^*$ by $u(h)=h\cdot\mu$ for every $h\in L^1(\mu)$. Since each $\mu_n$ is absolutely continuous relative to $\mu$, $u(L^1(\mu))$ contains each $\mu_n$. Moreover, $u$ is an isometric isomorphism of $L^1(\mu)$ onto a closed subspace of $C(K)^*$ containing $H$.

Application of Lemma~\ref{q_P<de} now finishes the proof.
\end{proof}

\begin{cor}\label{c:L1mudirect} Let $\mu$ be a non-negative $\sigma$-additive measure. Then the space $X=L^1(\mu)$ has the direct quantitative Dunford-Pettis property. Moreover,
$$\cc{T}\le 4\wk[X^*]{T^*}$$ whenever $Y$ is a Banach space and $T:X\to Y$ an operator.
\end{cor}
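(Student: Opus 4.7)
The plan is to reduce the statement to Theorem~\ref{qdpt-ck} applied to the dual of $X$, transfer the result back to $X$ via Remark~\ref{rm-duality}(a'), and carefully track the numerical constants.

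The key observation is that the dual $X^* = (L^1(\mu))^*$ is isometrically isomorphic to $C(K)$ for some compact Hausdorff space $K$. This is a standard fact: $X^*$ is a commutative unital $C^*$-algebra (an abstract $L^\infty$-space), so the Gelfand--Naimark/Kakutani representation identifies it with $C(K)$, where $K$ is its spectrum (concretely, the Stone space of the underlying measure algebra).

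Having this identification at hand, Theorem~\ref{qdpt-ck} gives $\ca[\rho]{f_n} = \de{f_n}$ for every bounded sequence $(f_n)$ in $X^*$; hence $X^*$ satisfies condition~(iv) of Theorem~\ref{q-DP-dual} with constant $C = 1$. By Remark~\ref{rm-duality}(a'), the space $X$ then satisfies condition~(iv) of Theorem~\ref{q-DP-direct} with the same constant $C = 1$, that is, $\ca[\rho^*]{x_n^*} \le \de{x_n^*}$ for every bounded sequence $(x_n^*)$ in $X^*$. In particular, $X$ has the direct quantitative Dunford-Pettis property.

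For the explicit bound $\cc{T}\le 4\wk[X^*]{T^*}$, I would trace the constants through the proof of Theorem~\ref{q-DP-direct}: the implication (iv) $\Rightarrow$ (v) yields (v) with constant $2C$ (using the inequality $\wde{x_n^*} \le 2\wk[X^*]{\{x_n^*:n\in\en\}}$ from \cite[Theorem~1]{wesecom}), and (v) $\Rightarrow$ (i) yields (i) with constant $2C$. Starting from $C = 1$ in (iv), these two factors of $2$ combine to give the required inequality $\cc{T}\le 4\wk[X^*]{T^*}$ for every operator $T:X\to Y$. The only obstacle here is the bookkeeping of the constants; no new ideas are needed beyond the already established results.
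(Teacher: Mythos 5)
Your proposal is correct and follows essentially the same route as the paper: identify $X^*$ with a $C(K)$ space, apply Theorem~\ref{qdpt-ck} to get condition~(iv) of Theorem~\ref{q-DP-dual} for $X^*$ with constant $1$, transfer it to condition~(iv) of Theorem~\ref{q-DP-direct} for $X$ via Remark~\ref{rm-duality}, and track the two factors of $2$ through the implications (iv)~$\Rightarrow$~(v)~$\Rightarrow$~(i) to obtain the constant $4$. The constant bookkeeping matches the paper's exactly.
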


\begin{proof} The space $X^*$ is a $C(K)$-space, so it is enough to use Theorem~\ref{qdpt-ck} and Theorem~\ref{dual-conn}. Let us prove the `moreover' part. By  Theorem~\ref{qdpt-ck}, the space $X^*$ satisfies the condition (iv) of Theorem~\ref{q-DP-dual} with $C=1$. By Remark~\ref{rm-duality}, the space $X$ satisfies the condition (iv) of Theorem~\ref{q-DP-direct} with $C=1$ as well. It follows from the proof of the implications (iv) $\Rightarrow$ (v) and (v) $\Rightarrow$ (i) of Theorem~\ref{q-DP-direct}
that $X$ satisfies the respective condition (i) with $C=4$. This completes the proof.
\end{proof}

\begin{thm}\label{t:Linftydual}
Every $\L_\infty$ space $X$ has the dual quantitative Dunford-Pettis property.
\end{thm}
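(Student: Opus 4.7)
The plan is to mirror the strategy used for Theorem~\ref{t:Linftydirect}, but starting from the dual-for-$C(K)$ result (Theorem~\ref{qdpt-ck}) instead of the dual-for-$L^1(\mu)$ result. The chain of implications is: $X$ is $\L_\infty$ $\Rightarrow$ $X^*$ is an $\L_1$ space $\Rightarrow$ $X^*$ is isomorphic to a complemented subspace of some $L^1(\mu)$ $\Rightarrow$ $X^*$ has the direct quantitative Dunford-Pettis property $\Rightarrow$ $X$ has the dual quantitative Dunford-Pettis property.

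First, I would invoke the standard fact from \cite[p.~58]{JoLi} that the dual of an $\L_\infty$ space is an $\L_1$ space, and that every $\L_1$ space is isomorphic to a complemented subspace of some $L^1(\mu)$ space (for a suitable non-negative $\sigma$-additive measure $\mu$). Next, I would use Corollary~\ref{c:L1mudirect}, which asserts that every $L^1(\mu)$ space possesses the direct quantitative Dunford-Pettis property; this step relies on Theorem~\ref{qdpt-ck} (dual quantitative DP for $C(K)$) together with the duality transfer of Theorem~\ref{dual-conn}.

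Once $L^1(\mu)$ is known to have the direct quantitative Dunford-Pettis property, Proposition~\ref{qdp-iq} transfers this property from $L^1(\mu)$ to its complemented subspace $X^*$. Finally, Theorem~\ref{dual-conn}(b) yields that $X$ itself has the dual quantitative Dunford-Pettis property.

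There is no real obstacle here: every ingredient has been prepared in the preceding sections, so the proof reduces to chaining these four references together. The only thing to be careful about is the direction of the duality transfer — we need to pass from the direct property of $X^*$ down to the dual property of $X$, which is exactly the content of Theorem~\ref{dual-conn}(b), not of (a).

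\begin{proof}
Let $X$ be an $\L_\infty$ space. By \cite[p.~58]{JoLi}, $X^*$ is an $\L_1$ space, and hence, by the same reference, $X^*$ is isomorphic to a complemented subspace of $L^1(\mu)$ for some non-negative $\sigma$-additive measure $\mu$. By Corollary~\ref{c:L1mudirect}, the space $L^1(\mu)$ has the direct quantitative Dunford-Pettis property. Proposition~\ref{qdp-iq} then implies that $X^*$ has the direct quantitative Dunford-Pettis property as well. Finally, applying Theorem~\ref{dual-conn}(b), we conclude that $X$ has the dual quantitative Dunford-Pettis property.
\end{proof}
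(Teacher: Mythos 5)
Your proof is correct and coincides with the paper's own argument: both pass from $X^*$ being isomorphic to a complemented subspace of an $L^1(\mu)$ space, through Corollary~\ref{c:L1mudirect} and Proposition~\ref{qdp-iq}, to Theorem~\ref{dual-conn}(b). Your explicit identification of part (b) as the relevant direction of the duality transfer is a correct reading of what the paper leaves implicit.
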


\begin{proof}
By \cite[pp. 57--58]{JoLi}, $X^\ast$ is isomorphic to a complemented subspace of a space of the form  $L^1(\mu)$ for a non-negative $\sigma$-additive measure $\mu$. Hence $X^*$ has the direct quantitative Dunford-Pettis property  by Corollary~\ref{c:L1mudirect} and Proposition~\ref{qdp-iq}. Consequently, $X$ has the dual quantitative Dunford-Pettis property by Theorem~\ref{dual-conn}.
\end{proof}

\begin{cor}
Every $\L_1$ space has the direct quantitative Dunford-Pettis property.
\end{cor}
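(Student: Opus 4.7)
The plan is to reduce the statement to the combination of Theorem~\ref{t:Linftydual} (just established) with the duality transfer result Theorem~\ref{dual-conn}(a). If $X$ is an $\L_1$ space, then by \cite[p.~58]{JoLi} the dual $X^*$ is an $\L_\infty$ space. Applying Theorem~\ref{t:Linftydual} to $X^*$, we obtain that $X^*$ has the dual quantitative Dunford-Pettis property. Then Theorem~\ref{dual-conn}(a) immediately yields that $X$ itself has the direct quantitative Dunford-Pettis property.

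So the entire argument consists of chaining two previously proven facts and the classical duality for $\L_p$ spaces; no new estimate is required. This mirrors exactly the structure used to deduce Theorem~\ref{t:Linftydirect}: there, the direct property for $\L_\infty$ was obtained by going through $L^1(\mu)$, complemented subspaces (Proposition~\ref{qdp-iq}), and Theorem~\ref{dual-conn}(b); here the symmetric route via Theorem~\ref{dual-conn}(a) is even cleaner since the required property of $X^*$ has already been packaged as Theorem~\ref{t:Linftydual}. Consequently there is no real obstacle, and the proof will occupy essentially one sentence.
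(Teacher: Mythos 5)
Your proof is correct and is precisely the argument the paper intends: the dual of an $\L_1$ space is an $\L_\infty$ space, Theorem~\ref{t:Linftydual} gives the dual quantitative Dunford-Pettis property for that dual, and Theorem~\ref{dual-conn}(a) transfers this to the direct property for the space itself. This matches the route announced after Theorem~\ref{l1-li} and mirrors the proof of the companion corollary at the end of Section~\ref{sec-directCK}.
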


%%%%%%%%%%%%%%%%%%%%%%%%%%%%%%%%%%%%%%%%%%%%%%%%%%%%%

\section{An example}\label{sec-example}

In this section we present two results -- the first one is a detailed version of Example~\ref{protipriklad}; the second one compares the quantities $\wk[]{\cdot}$ and $\omega(\cdot)$ in the space $c_0(\Gamma)$. It is used to formulate the example in a more precise way, but it is simultaneously of an independent interest. 

\begin{example}\label{exa-podrobny} There is a Banach space $X$ with the following properties
\begin{itemize}
	\item[(i)] The space $X^*$ is a separable $L$-embedded space with the Schur property. In particular, $X^*$ has the direct quantitative Dunford-Pettis property and $X$ has the dual quantitative Dunford-Pettis property.
	\item[(ii)] There is a sequence $(A_n)$ of subsets of $B_{X^*}$ such that $\omega(A_n)=\chi(A_n)\ge\frac14$
	for each $n\in\en$ and $\wk[X^*]{A_n}\to0$.
	\item[(iii)] There is a sequence $(T_n)$ of bounded linear operators $T_n:X\to c_0$ such that $\|T_n\|\le 2$,  $\cc{T_n}\ge 1$ for each $n\in\en$ and $\omega(T_n)=\wk[c_0]{T_n}\to0$.
	\item[(iv)] The space $X$ does not have the direct quantitative Dunford-Pettis property and $X^*$ does not have the dual quantitative Dunford-Pettis property.
	\item[(v)] The space $X\oplus X^*$ has the Dunford-Pettis property but does not have any of the two variants of quantitative Dunford-Pettis property.
\end{itemize}
\end{example}

\begin{proof} We will construct the space $X$ and operators $T_n$ satisfying the conditions (i) and (iii). Then the assertions (iv) and (v) will be satisfied automatically. Indeed, it follows from (iii) that $X$ does not satisfy the condition (vi) of Theorem~\ref{q-DP-direct} and thus $X$ does not have the direct quantitative Dunford-Pettis property. Using Theorem~\ref{dual-conn} we then conclude that $X^*$ has not the dual quantitative Dunford-Pettis property, which completes the proof of the assertion (iv).
Further, by (i) both $X$ and $X^*$ have the Dunford-Pettis property, hence so does $X\oplus X^*$. It follows from (iv) and Proposition~\ref{qdp-iq} that $X\oplus X^*$ does not have any of the two quantitative versions of the Dunford-Pettis property.

Let us continue by describing the space $X$ and the operators $T_n$. 
Fix an arbitrary $\alpha>0$. Set $$B_\alpha=\alpha B_{c_0}+B_{\ell_1}\subset c_0.$$
Since $B_{\ell_1}$ is weakly compact in $c_0$, $B_\alpha$ is the closed unit ball of an equivalent norm on $c_0$.
Denote this space by $X_\alpha$ and the identity mapping of $X_\alpha$ onto $c_0$ by $I_\alpha$. Then $I_\alpha$ is an onto isomorphism and $\|I_\alpha\|=1+\alpha$.
So, in particular $X_\alpha$ is isomorphic to $c_0$ and hence $X_\alpha^*$ is isomorphic to $\ell_1$.
 The norm on $X_\alpha^*$ is easily computed to be given by the formula
$$\|x^*\|^*_\alpha=\alpha\|x^*\|_1+\|x^*\|_\infty.$$
Further, $X_\alpha^{**}$ is isomorphic to $\ell_\infty$ and by the Goldstine theorem the closed unit ball is equal to
$$\wscl{B_\alpha}=\alpha B_{\ell_\infty}+B_{\ell_1}.$$
The third dual $X_\alpha^{***}$ is isomorphic to $\ell_\infty^*=M(\beta\en)$, the space of all (signed or complex) Radon measures on the \v{C}ech-Stone compactification of $\en$. The norm is given by
the formula
$$\|\mu\|^{***}_\alpha=\alpha\|\mu\|_{M(\beta\en)}+\|(\mu\{k\})_{k=1}^\infty\|_\infty.$$
It follows that $X_\alpha^*$ is $L$-embedded. Indeed, the respective projection of $X_{\alpha}^{***}$ onto $X_\alpha^*$ can be defined by
$$\mu\mapsto \mu|_{\en}=(\mu\{k\})_{k=1}^\infty,\qquad \mu\in X_{\alpha}^{***}.$$
Moreover, $X_\alpha^*$ has the Schur property, as it is isomorphic to $\ell_1$.

Further, let
$$X=\left(\bigoplus_{n\in\en} X_{1/n}\right)_{c_0}.$$
Then 
$$X^*=\left(\bigoplus_{n\in\en} X_{1/n}^*\right)_{\ell_1},$$
in particular, $X^*$ is an $\ell_1$-sum of $L$-embedded separable spaces with the Schur property, thus it is a separable $L$-embedded space  (by  \cite[Proposition~1.5]{hawewe}) 
and has the Schur property as well (this follows
by a straightforward modification of the proof that $\ell_1$ has the Schur property, see \cite[Theorem~5.19]{fhhmpz}). It follows that the assertion (i) is satisfied (using, moreover, Proposition~\ref{schur-triv} and Theorem~\ref{dual-conn}).

Denote by $P_n$ the projection of $X$ onto the $n$-th coordinate and 	set $T_n=I_{1/n}P_n$. As $\|P_n\|=1$, we have $\|T_n\|\le 1+\frac1n\le2$. 

Further, fix an arbitrary $n\in\en$. 

Let $(x_k)$ be the canonical basis of $X_{1/n}$ (embedded in $X$). Then $(x_k)$ is a weakly Cauchy sequence in $B_X$ and $\ca{T_nx_k}=1$. Thus $\cc{T_n}\ge1$.

Further, $\omega(T_n)\le\frac1n$, as $T_n B_X=B_{1/n}=B_{\ell_1}+\frac1n B_{c_0}$ and $B_{\ell_1}$ is weakly compact in $c_0$. Hence $\omega(T_n)\to0$. Since $\wk[c_0]{T_n}\le \omega(T_n)$ by \eqref{eq:diagramm-operators}, we get $\wk[c_0]{T_n}\to0$ as well. That in fact $\wk[c_0]{T_n}= \omega(T_n)$ follows from Proposition~\ref{c0Gamma} below. This completes the proof of the assertion (iii).

It remains to prove the assertion (ii). To do that it is enough to set $A_n=\frac12T_n^*(B_{\ell_1})$.
To verify it let us consider the operator $T_n^*:\ell_1\to X^*$. We have $T_n^*=P_n^* I_{1/n}^*$. The operator  $P_n^*$ is the injection of $X_{1/n}^*$ into $X^*$ (made by setting other coordinates to be $0$). Further, operator $I_\alpha^*$ is the identity of $\ell_1$ onto $X_{\alpha}^*$.
In particular, let $(e_k)$ be the canonical basic sequence in $X_\alpha^*$. Then $\|e_k-e_l\|_\alpha^*=2\alpha+1$ for $k,l\in\en$ distinct, thus $\wca{e_k}>1$. In particular,
$\beta(I_\alpha^*)>1$. As $P_n^*$ is an isometric embedding, we have $\beta(T_n^*)=\beta(I_{1/n}^*)>1$.
Since $X^*$ has the Schur property, using \eqref{eq:compact} we obtain $\omega(T_n^*)=\bchi(T_n^*)>\frac12$, thus $\omega(A_n)=\bchi(A_n)>\frac14$.

Finally, using (ii) and \eqref{eq:Gant-dh} we have
$$\wk[X^*]{A_n}=\frac12\wk[X^*]{T_n^*}\le\wk[c_0]{T_n}\to 0.$$
This completes the proof. Anyway, let us estimate $\wk[X^*]{T_n^*}$ explicitly. Let us first notice that $\wk[X^*]{T_n^*}\le\wk[X_{1/n}^*]{I_{1/n}^*}$ (as $P_n^*$ is an isometric embedding).
So, let us estimate $\wk[X_\alpha^*]{I_\alpha^*}$:

We have $I_\alpha^*(B_{\ell_1})=B_{\ell_1}\subset X_\alpha^*$. By the Goldstine theorem its weak$^*$ closure in $X_\alpha^{***}$ is equal to $B_{M(\beta\en)}$. Fix any $\mu\in B_{M(\beta\en)}$. Then $\mu|_\en\in X_\alpha^*$ and 
$$\|\mu-\mu|_\en\|_\alpha^{***}=\alpha\|\mu-\mu|_\en\|_{M(\beta\en)}\le\alpha,$$
thus
$$\wk[X_\alpha^*]{I_\alpha^*}=\dh(B_{M(\beta\en)},X_\alpha^*)\le\alpha.$$
It follows that 
$$\wk[X^*]{T_n^*}\le\wk[X_{1/n}^*]{I_{1/n}^*}\le\frac1n\to0.$$
\end{proof}	

The following proposition was used in the previous example to precise the formulation. Anyway, it is of independent interest as it is a partial answer to a general open question (see the next section).

\begin{prop}\label{c0Gamma} Let $X=c_0(\Gamma)$ for a set $\Gamma$. Then $\wk{A}=\omega(A)$ for any bounded set $A\subset X$.

Moreover, if $K\subset X^{**}$ is weak$^*$ compact, then there is weakly compact set $L\subset X$ with
$\dh(K,L)=\dh(K,X)$.
\end{prop}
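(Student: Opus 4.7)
My plan is to first prove the ``moreover'' statement and then deduce the equality $\wk{A}=\omega(A)$ from it as a direct corollary. The inequality $\wk{A}\le\omega(A)$ always holds by \eqref{eq:wcomp2}; conversely, applying the ``moreover'' part to the weak$^*$-compact set $K=\wscl{A}\subset X^{**}$ yields a weakly compact $L\subset X$ satisfying $\dh(\wscl{A},L)=\dh(\wscl{A},X)=\wk{A}$, and since $A\subset\wscl{A}$ this gives $\omega(A)\le\dh(A,L)\le\wk{A}$.

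To prove the ``moreover'' part, set $d=\dh(K,X)$. I will rely on the elementary formula
$$\dd(f,c_0(\Gamma))=\inf_{F\subset\Gamma\text{ finite}}\sup_{\gamma\notin F}|f(\gamma)|,\qquad f\in\ell_\infty(\Gamma),$$
and introduce the coordinatewise truncation $\tau\colon\ell_\infty(\Gamma)\to\ell_\infty(\Gamma)$ defined by
$$\tau(f)(\gamma)=\operatorname{sgn}(f(\gamma))\cdot\max\{|f(\gamma)|-d,\,0\}.$$
A case split gives $\|f-\tau(f)\|_\infty\le d$ for every $f\in\ell_\infty(\Gamma)$. For $f\in K$ one has $\dd(f,X)\le d$, so for any $\varepsilon>0$ the formula above produces a finite $F\subset\Gamma$ with $\sup_{\gamma\notin F}|f(\gamma)|<d+\varepsilon$; consequently the set $\{\gamma:|\tau(f)(\gamma)|\ge\varepsilon\}=\{\gamma:|f(\gamma)|\ge d+\varepsilon\}$ is finite, so $\tau(f)\in c_0(\Gamma)=X$.

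It remains to show that $L:=\tau(K)$ is weakly compact in $X$. Since $\tau$ acts coordinatewise through the continuous scalar map $t\mapsto\operatorname{sgn}(t)(|t|-d)^+$, it is continuous on $\ell_\infty(\Gamma)$ equipped with the product (pointwise) topology. A routine $\ell_1$-tail estimate shows that on every norm-bounded subset of $\ell_\infty(\Gamma)$ the weak$^*$-topology $\sigma(\ell_\infty(\Gamma),\ell_1(\Gamma))$ coincides with pointwise convergence, and restricted to bounded subsets of $c_0(\Gamma)$ this also agrees with the weak topology $\sigma(c_0(\Gamma),\ell_1(\Gamma))$. Hence $L$ is the image of a weak$^*$-compact set under a map that is weak$^*$-to-pointwise continuous on bounded sets, so $L$ is pointwise compact inside a bounded subset of $c_0(\Gamma)$ and therefore weakly compact. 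Finally, $\dd(f,L)\le\|f-\tau(f)\|_\infty\le d$ for every $f\in K$ gives $\dh(K,L)\le d$, while $\dh(K,L)\ge\dh(K,X)=d$ follows from $L\subset X$. The only point requiring care is verifying that $\tau$ sends $K$ into $c_0(\Gamma)$, which rests on the distance formula to $c_0(\Gamma)$; everything else is essentially formal once the three topologies on the bounded sets are identified.
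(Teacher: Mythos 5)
Your proof is correct and follows essentially the same route as the paper: the main equality is reduced to the ``moreover'' part via $K=\wscl{A}$, and the ``moreover'' part is proved with exactly the same coordinatewise truncation by $d=\dh(K,X)$ (the paper's map $\Psi_c$), using the distance formula to $c_0(\Gamma)$ and the coincidence of the weak$^*$, pointwise and weak topologies on bounded sets. No gaps.
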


\begin{proof} It is enough to prove the `moreover' statement. Indeed, $\wk{A}\le\omega(A)$ by \eqref{eq:diagrammsets}. Conversely, $\wk{A}=\dh(\wscl A,X)$ and $\wscl A$ is weak$^*$ compact in $X^{**}$.
If we are able to find $L\subset X$ weakly compact such that $\dh(\wscl A,L)=\dh(\wscl A,X)$, then
$$\omega(A)\le\dh(A,L)\le\dh(\wscl A,L)=\dh(\wscl A,X)=\wk{A}.$$

So, let us prove the `moreover' statement. The space $X^{**}$ is canonically identified with $\ell_\infty(\Gamma)$ and the weak$^*$ topology on bounded sets coincides with the topology of pointwise convergence. Fix an arbitrary $c>0$ and define the mapping $\Psi_c:\ell_\infty(\Gamma)\to\ell_\infty(\Gamma)$ by the formula
$$\Psi_c(x)(\gamma)=\begin{cases} 0 & \mbox{ if }|x(\gamma)|\le c, \\
x(\gamma)(1-\frac c{|x(\gamma)|}) & \mbox{ if }|x(\gamma)|>c.
\end{cases}$$
Then $\Psi_c$ is pointwise-to-pointwise continuous. Moreover, $\|\Psi_c(x)-x\|\le c$ for each $x\in\ell_\infty(\Gamma)$ and $\Psi_c(x)\in c_0(\Gamma)$ if and only if $\dd(x,c_0(\Gamma))\le c$. 
Indeed,
$$\dd(x,c_0(\Gamma))= \inf\{ \sup_{\gamma\in\Gamma\setminus F} |x(\gamma)|: F\subset\Gamma\mbox{ finite}\}.$$
So, let $K\subset X^{**}$ be weak$^*$ compact. Set $c=\dh(K,X)$. Then $L=P_c(K)$ is contained in $X$, it is weakly compact and $\dh(K,L)\le c$. This completes the proof.
\end{proof}

%%%%%%%%%%%%%%%%%%%%%%%%%%%%%%%%%%%%%%%%%%%%%%%%%%%%%%%%%%%
\section{Open problems}

In the final section we collect some open questions which arised naturally during our research.

\begin{question}\label{question1} Let $X=C(K)$ (or, more generally, let $X$ be an $L_1$ predual). Are the quantities
$\omega(\cdot)$ and $\wk{\cdot}$ equal, or at least equivalent?
\end{question}

By Proposition~\ref{c0Gamma}, the two quantities are equal for $X=c_0(\Gamma)$. It follows that they
are equivalent for $X=C(\alpha\Gamma)$, the space of continuous functions on the one-point compactification of the discrete space $\Gamma$, as this space is isomorphic to $c_0(\Gamma)$. However, 
we do not know whether even in this easy examples the quantities are in fact equal. We also do not know
what happens for general $C(K)$ spaces, in particular for $C([0,1])$.

The fact that this question is interesting and may be rather hard is illustrated by the fact that from the positive answer it would easily follow that Eberlein compact spaces are preserved by continuous mappings. This is a well-known but nontrivial result. Let us comment this connection in a more detail. Recall that a compact space $K$ is called {\it Eberlein} if it is homeomorphic to a subset of $(X,w)$ for a Banach space $X$. 

So, suppose that the previous question has positive answer. Let $K$ be a continuous image of an Eberlein compact space. Then the space $C(K)$ is easily seen to be isomorphic to a subspace of a weakly compactly generated space. Using Theorem~\ref{t:swcg} and our assumption we get that $C(K)$ is in fact weakly compactly generated (we remark that we use only the easy implication of the second statement of Theorem~\ref{t:swcg}). Hence, $K$ is easily seen to be an Eberlein compact space.

\begin{question}\label{ques3} Let $X$ be a Banach space. Suppose that there is $C>0$ such that for each operator $T:X\to c_0$ we have $\cc{T}\le C\wk[c_0]{T}$. Does $X$ have the direct quantitative Dunford-Pettis property?
\end{question}

By Theorem~\ref{t:znama}, the space $X$ does have the Dunford-Pettis property. Further, to ensure that $X$ has the direct quantitative Dunford-Pettis property it is enough that such an inequality holds for operators from $X$ to $\ell_\infty$. It is not clear whether $\ell_\infty$ can be replaced by $c_0$. 
The space $X$ from Example~\ref{exa-podrobny} which fails the direct quantitative Dunford-Pettis property fails this property also for operators to $c_0$.

\begin{question} Let $X$ be a Banach space. Suppose that there is $C>0$ such that for each Banach space $Y$ and each operator $T:X\to Y$ we have $\cc{T}\le C\omega(T)$. Does $X$ have the direct quantitative Dunford-Pettis property?
\end{question}

The stated property is a formally weaker version of the direct quantitative Dunford-Pettis property
(see Theorem~\ref{q-DP-direct}(vi) and \eqref{eq:wcomp2}). We do not know any example showing that this property is really weaker, the space $X$ from Example~\ref{exa-podrobny} fails even the weaker version.
Let us remark that the positive answer to Question~\ref{ques3} implies the positive answer to the present question due to Proposition~\ref{c0Gamma}. Moreover, the positive answer to Question~\ref{question1} also implies the positive answer to the last question. Indeed, by Theorem~\ref{q-DP-direct} it is enough to consider operators $T:X\to\ell_\infty$ and $\ell_\infty$ is a $C(K)$ space.

\begin{question} Suppose that $X$ is a Banach space such that $X^*$ satisfies the dual quantitative Dunford-Pettis property.
\begin{itemize}
	\item[(a)] Does $X^*$ have the direct  quantitative Dunford-Pettis property?
	\item[(b)] Does $X$ have the dual  quantitative Dunford-Pettis property?
\end{itemize}
 \end{question}

It follows from Theorem~\ref{dual-conn} that the positive answer to (a) implies the positive answer to (b). Example~\ref{exa-podrobny} shows that the two versions of the quantitative Dunford-Pettis property are incomparable in general. However, it does not answer the above question. In particular, we do not know whether $X^{**}$ has the dual quantitative Dunford-Pettis property if $X$ is the space from Example~\ref{exa-podrobny}.

%%%%%%%%%%%%%%%%

%\bibliography{qdpp}\bibliographystyle{plain}
%\begin{thebibliography}{1}
\def\cprime{$'$}

\end{document}